\newcommand{\R}{\mathbb{R}}
\newcommand{\C}{\mathbb{C}}
\newcommand{\N}{\mathbb{N}}
\newcommand{\bord}{\partial\Omega}
\newcommand{\nbord}{\R^n\backslash\partial\Omega}
\newcommand{\Bcal}{\mathcal{B}}
\newcommand{\Scal}{\mathcal{S}}
\newcommand{\Dcal}{\mathcal{D}}
\newcommand{\Kcal}{\mathcal{K}}
\newcommand{\Vcal}{\mathcal{V}}
\newcommand{\Wcal}{\mathcal{W}}
\newcommand{\Mcal}{\mathcal{M}}
\newcommand{\Ccal}{\mathcal{C}}
\newcommand{\Tr}{\mathrm T\!\operatorname{r}}
\newcommand{\Hp}{\Bcal(\bord)}
\newcommand{\Hm}{\Bcal'(\bord)}
\newcommand{\ddn}[1]{\frac{\partial #1}{\partial\nu}}
\newcommand{\ddny}[2]{\frac{\partial_{#2} #1}{\partial\nu}}
\newcommand{\dx}{\mathrm dx}
\newcommand{\dy}{\mathrm dy}
\newcommand{\dsig}{\mathrm d\sigma}
\newcommand{\dth}{\mathrm d\theta}
\newcommand\locb{\mathrm{loc_b}}
\theoremstyle{plain}
\newtheorem{theorem}{Theorem}[section]
\newtheorem{lemma}[theorem]{Lemma}
\newtheorem{proposition}[theorem]{Proposition}
\newtheorem{corollary}[theorem]{Corollary}
\newtheorem{definition}[theorem]{Definition}
\theoremstyle{definition}
\newtheorem{remark}[theorem]{Remark}
\providecommand{\keywords}[1]{\noindent\textbf{Keywords:} #1}
\providecommand{\MSC}[1]{\noindent\textbf{2020 MSC:} #1}
\title{Helmholtz transmission problem and intrinsic impedance scattering problem on extension domains}
\author{Gabriel Claret\thanks{Université Paris-Saclay, CentraleSupélec, Fédération de Mathématiques (CNRS FRA-3487) and Laboratoire MICS, Gif-sur-Yvette, France. \texttt{gabriel.claret@centralesupelec.fr}}}
\date{\today}
\begin{document}

\maketitle

\begin{abstract}
We consider a transmission problem for the Helmholtz equation with a fixed, positive wavenumber across the boundary of an extension domain.
Such a boundary can be Lipschitz, fractal, or of varying Hausdorff dimension for instance.
We generalise the notions of layer potential and Neumann-Poincar\'e operators, and of Calder\'on projectors in that context.
Those boundary operators allow to connect the transmission problem (on the whole space) to one-sided problems -- notably, scattering problems -- with Dirichlet, Neumann and Robin boundary conditions, and restate their well-posedness as boundary equations.
Since an extension domain needs no specific boundary measure, the Robin (impedance) condition is not understood in a boundary $L^2$-type space, rather by duality on the trace space itself.
We discuss the well-posedness of the impedance scattering problem in that framework and compare it to the classical $L^2$ setting.
Our analysis allows to generalise optimisation results for acoustic scattering when the obstacle is an extension domain in any dimension.
\end{abstract}

\keywords{Acoustic scattering, Sobolev extension domains, transmission problem, Robin boundary condition, layer potential operators.}

\MSC{Primary 31E05, 35J25; Secondary 35J20, 35J05, 31B10}

\tableofcontents

\section{Introduction}

From sonars to medical imaging or even robot vacuum cleaners, acoustic scattering is at the core of numerous applications.
In an acoustic scattering problem, an incident field (solution to the Helmholtz equation) is scattered by an obstacle in such a way that the total field -- superposition of the incident and scattered ones -- satisfies a given boundary condition.
Modelling that problem, a crucial aspect is the geometry of the obstacle, namely, its regularity.
While there is abundant literature in the case of a smooth (or at least Lipschitz) scatterer~\cite{mitzner_numerical_1967,shaw_transient_1972,bennett_time_1981,li_identifying_2025}, it is known that irregular shapes such as fractals are most relevant to represent the behaviour of real-life objects~\cite{mandelbrot_how_1967,mandelbrot_fractal_1983}.

In this paper, we study the transmission problem for the Helmholtz equation with a positive wavenumber across the boundary of an extension domain, thus generalising the notions of layer potential and Neumann-Poincar\'e operators, and Calder\'on projectors in that context.
The latter allow to link the transmission problem (see~\eqref{Eq:Helm-Prob}), which is two-sided, to one-sided problems with Dirichlet, Neumann or Robin boundary conditions.
In particular, we use our analysis of the transmission problem to consider scattering problems, which have been studied extensively and in a variety of contexts, see for instance~\cite{angell_three_1982, ha-duong_transient_1990, wendland_hypersingular_1990,mclean_strongly_2000, bonazzoli_multi-domain_2024}.
The connection between the transmission and the scattering problems allows to derive the well-known boundary integral equations which can be found in~\cite{colton_integral_2013,colton_inverse_2013} in the case of a $C^2$ obstacle,~\cite{brakhage_uber_1965,chandler-wilde_numerical-asymptotic_2012,chandler-wilde_spectrum_2023} in the case of a Lipschitz obstacle,~\cite{chandler-wilde_sobolev_2017,chandler-wilde_boundary_2021,bannister_acoustic_2022,caetano_hausdorff-measure_2024} when the obstacle is a fractal screen,~\cite{bannister_acoustic_2026} for domains with fractal boundaries and~\cite{caetano_integral_2024,chandler-wilde_integral_2026} in the case of a sound-soft compact scatterer.
In the framework of extension domains, we derive boundary equations similar to those boundary integral equations in the case of Dirichlet (sound-soft), Neumann (sound-hard) and Robin (impedance) boundary conditions, and discuss their unique solvability.

The layer potential operators are an essential tool in the study of transmission problems, in which a solution is sought from the knowledge of its behaviour upon crossing a boundary -- more specifically its jumps in trace and normal derivative.
In classical potential theory on smooth domains, those operators are defined as boundary integrals involving fundamental solutions~\cite{vladimirov_equations_1971}.
Extension domains, however, have no specific boundary measure~\cite{hajlasz_sobolev_2008,jones_quasiconformal_1981}, which prevents from using a similar definition in that framework.
Instead, using a variational approach similar to~\cite{barton_layer_2017}, the layer potential operators were defined in the case of the transmission problem associated to $-\Delta$ and $-\Delta+1$ for extension domains~\cite{claret_layer_2026}.
The main difference between the latter case and the Helmholtz equation is the non-integrability of the solutions at infinity.
Nonetheless, truncation methods as in~\cite{bardos_variational_1994,nedelec_acoustic_2011} allow to state a bounded variational problem equivalent to the transmission problem and to which the standard methods can be applied.

In addition to the transmission problem for $-\Delta$ and $-\Delta+1$ mentioned above, dealing with problems set on extension domains by means of variational methods has proved successful in various contexts: for the study of the Riemann-Hilbert problem~\cite{claret_convergence_2025} or of the Calder\'on problem~\cite{claret_poincare-steklov_2025} for instance.
In these cases, the boundary conditions were always of Dirichlet or Neumann type, respectively set on the trace space or its dual space.
Indeed, the lack of a boundary measure prevents from considering the boundary $L^2$ spaces frequently used in the classical case, see for instance~\cite{arendt_dirichlet--neumann_2015, verchota_layer_1984}.
As a continuation, it appears natural to consider problems in which the boundary of an extension domain is endowed with a Robin (impedance) condition.
To circumvent the lack of a boundary measure -- hence, of a boundary $L^2$ space -- inherent to the setting of extension domains, that condition is to be understood in the dual of the trace space, through a Riesz isometry (see~\eqref{Eq:BC-Rob}).
Doing so, the Robin condition is not understood by means of an additional boundary measure and is, in that sense, intrinsic.
% While there are examples of such intrinsic Robin boundary conditions in the literature~\cite{becache_perfectly_2003,bonnet-bendhia_acoustic_2002}, those papers always consider the condition on part of the boundary of a rectangle, and with a specific impedance coefficient.
While there are examples of such intrinsic Robin boundary conditions in the literature as in~\cite{bonnet-bendhia_acoustic_2002,becache_perfectly_2003,arendt_extensions_2022}, those papers always consider the condition in the Lipschitz case and with a specific impedance coefficient.
The case of possibly non-Lipschitz domains with Robin boundary condition was studied in~\cite{daners_robin_2000,dong_conormal_2021}, however the boundaries considered were always the support of a measure of dimension $n-1$; we also refer to~\cite{bass_robin_2008} for a probabilistic perspective.
In this work, we consider the Helmholtz problem on the exterior of an extension domain, with a general measure-free Robin boundary condition~\eqref{Eq:Rob-Prob}.
There is a fundamental difference between our approach, relying solely on the trace space, and the standard $L^2$ approach: in our setting, the trace operator is no longer compact.
Consequently, in the Fredholm decomposition of the variational formulation of the problem, the boundary term arising from the Robin condition can no longer be in the compact part, and must necessarily be associated with the coercive part.
To account for this change, new conditions must be met for the problem to be well-posed (Proposition~\ref{Prop:Rob-WP}).

As an application of our analysis on the impedance scattering problem, we consider an optimisation problem in which we seek the optimal impedance at the boundary of an obstacle to maximise the far-field power of a scattered field in a given direction, see Figure~\ref{Fig:Optim}.
That problem was first studied in~\cite{kirsch_optimal_1981} in the case of a planar piecewise $C^2$ domain, and later on generalised to $C^2$ domains in~\cite{angell_generalized_1982}, see also~\cite{colton_integral_2013}.
Those works conclude to the existence of an optimal impedance in a bounded convex or compact class of either $L^2$ or $L^\infty$ impedances at the boundary.
In the spirit of those papers, we solve the optimisation problem in the case of an extension domain as obstacle, in general dimension.

The structure and main results of this paper are as follows: in Section~\ref{Sec:Framework}, we introduce the functional framework in which this study is carried out.
We define the class of (two-sided) admissible domains which, in addition to Lipschitz domains, contains fractal and multifractal domains.
In particular, those domains are not endowed with a specific boundary measure.
The associated notion of trace at the boundary is specified, and that of weak normal derivative is generalised to locally integrable functions.
We prove Green's formula in that context (Theorem~\ref{Th:Green}).
In Section~\ref{Sec:Tr-Prob}, we consider the Helmholtz transmission problem on two-sided admissible domains.
Using variational methods, we define the corresponding layer potential operators in Subsection~\ref{Subsec:LP}, thus yielding Green's third identity for that problem (Theorem~\ref{Th:Green3}).
We prove the layer potentials defined in this measure-free way are connected to the Newtonian kernel, as in the Lipchitz case.
Then, we connect the transmission problem to the interior and exterior (scattering) Helmholtz problems with Dirichlet and Neumann boundary conditions in Subsection~\ref{Subsec:NP-Op} by means of the Neumann-Poincaré operators and Calder\'on projectors.
In Section~\ref{Sec:Scattering}, we turn to the impedance scattering problem by a two-sided admissible obstacle.
We introduce a measure-free, intrinsic Robin boundary condition and discuss the well-posedness of the exterior Robin problem in that sense in Subsection~\ref{Subsec:Robin}.
Relying on the connection between the transmission problem and the scattering problem, we express the well-posedness of the latter Robin problem in terms of a boundary equation (Theorem~\ref{Th:Boundary-Integral-Equation}), in the spirit of the boundary integral equations for more regular domains (which are endowed with a boundary measure).
We compare our intrinsic approach to the classical one, that is by means of the space $L^2$ when a boundary measure is specified.
Finally, we consider an optimisation problem in Subsection~\ref{Subsec:Control}, consisting in finding an optimal impedance boundary condition on a given obstacle to maximise the energy of the scattered field in a certain direction at infinity.

\paragraph{Notations.}
Throughout this paper, $n\ge2$ denotes the dimension of the ambient space.
A set is referred to as a domain if it is nonempty, open and connected.
All domains considered are equal to the interiors of their closures.
% If $A\subset B$ are Borel sets and $H(B)$ is a set of (equivalence classes of) functions defined on $B$, we denote by $H(B)|_A$ the set of their restrictions to $A$.
If $A$ is a subset of $\R^n$, we will denote by $A^c\coloneq\R^n\backslash A$ its complement, $\mathring A$ its interior and $\overline A$ its closure.
We denote by $\mathscr D(A)$ the space of compactly supported infinitely differentiable functions on $A$, and by $\mathscr D'(A)$ its dual space (the space of distributions on $A$).
For $x\in\R^n$ and $r>0$, the $n$-dimensional open Euclidean ball of center $x$ and radius $r$ will be denoted by $B_r(x)$.
The imaginary unit will be denoted by $\mathrm i$ ($\mathrm i^2=-1$).
If $H$ and $V$ are Hilbert spaces, then $\mathcal{L}(H,V)$ denotes the set of all linear continuous maps from $H$ to $V$, and $\mathcal{L}(H)\coloneq\mathcal{L}(H,H)$.
$u\oplus v$ is the unique element of the direct sum $H\oplus V$ with $u\in H$ and $v\in V$.
All Sobolev spaces are understood as complex-valued.
% If $A$ is an open set of $\R^n$, we denote by $L^2_{\mathrm{loc}_{\mathrm b}}(A)$ the intersection of all $L^2(A\cap B_r(0))$ for $r>0$ (that is, the counterpart of $L^2_{\mathrm{loc}}(A)$ with respect to bounded sets instead of compact sets).
If $A$ is an open set of $\R^n$, we denote by $L^2_{\locb}(A)$ the set of all $u$ which are $L^2$ on every bounded subset of $A$ (that is, the counterpart of $L^2_{\mathrm{loc}}(A)$ with respect to bounded sets instead of compact sets, ensuring integrability up to the boundary).
% Similarly, we denote $H^1_{\mathrm{loc_b}}(A)=\bigcap_{r>0}H^1(A\cap B_r(0))$.
Similarly, we denote by $H^1_{\mathrm{loc_b}}(A)$ the set of all $u$ which are $H^1$ on every bounded subset of $A$.
Asymptotic comparisons will be expressed using Landau notations.
In particular, if $f,g:\C\to\C$ are such that $|f|=\varepsilon|g|$ where $\varepsilon:\R\to\R$ is bounded (resp. goes to $0$) as $|x|$ goes to $\infty$, we will denote $f(x)=\underset{|x|\to\infty}{O}(g(x))$ (resp. $f(x)=\underset{|x|\to\infty}{o}(g(x))$).

\section{Functional framework}\label{Sec:Framework}

We begin with the introduction of the class of domains on which this work will be carried out.

\begin{definition}[Admissible domain]\label{Def:Admissible-Dom}
A domain $\Omega$ of $\R^n$ is said to be an admissible domain if
\begin{enumerate}
\item[(i)] $\Omega$ is bounded;
\item[(ii)] $\Omega$ is an $H^1$-extension domain~\cite{hajlasz_sobolev_2008}: there exists a bounded, linear operator $\operatorname{Ext}_\Omega:H^1(\Omega)\to H^1(\R^n)$ such that
\begin{equation*}
\forall u\in H^1(\Omega),\quad (\operatorname{Ext}_\Omega u)|_\Omega=u.
\end{equation*}
\end{enumerate}
It is said to be two-sided admissible if, in addition,
\begin{enumerate}
% \item[(iii)] $\Omega$ is bounded;
\item[(iii)] $\overline{\Omega}^c$ is an $H^1$-extension domain.
% \item[(v)] $\bord$ is a null-set with respect to Lebesgue's measure on $\R^n$.
\end{enumerate}
\end{definition}

\begin{remark}
While Definition~\ref{Def:Admissible-Dom} mentions a \emph{domain} (thus, a connected set), the analysis carried out in this paper can also apply to a finite union of mutually disjoint $H^1$-admissible domains.
For simplicity, we will keep assuming that $\Omega$ is connected.
\end{remark}

% Note that (iii) is mostly a matter of convenience, allowing to identify $\Omega$ as the bounded (interior) domain, and $\overline{\Omega}^c$ as the unbounded (exterior) one on which the conditions at infinity will be set.
Condition (i) implies that $\bord$ has positive $H^1$-capacity~\cite[Theorem 5.1.13]{adams_function_1996}, defined for any $A\subset\R^n$ by
\begin{equation*}
\operatorname{Cap}(A)\coloneq\inf\big\{\|u\|_{H^1(\R^n)}^2\;\big|\;u\ge 1 \mbox{ a.e. on a neighbourhood of }A\big\},
\end{equation*}
with the convention $\|u\|_{H^1(\R^n)}=+\infty$ if $u\notin H^1(\R^n)$, see~\cite[Section 2.1]{fukushima_dirichlet_2010},~\cite[Section 7.2]{mazya_boundary_1991} and~\cite[Section 2]{biegert_traces_2009}.
In what follows, any mention of capacity will refer to that one, and so will the notions `quasi-everywhere' (q.e.) and `quasi-continuous'.
It follows from (ii) that $\bord$ is a null-set with respect to Lebesgue's measure on $\R^n$~\cite[p.~2]{hajlasz_sobolev_2008}.

The class of admissible domains contains that of Lipschitz domains, as well as domains with fractal boundaries and boundaries with changing Hausdorff dimension. It is known~\cite[Theorem 2]{hajlasz_sobolev_2008} that any $H^1$-extension domain is an $n$-set, meaning that it satisfies
\begin{equation*}
\exists\, c>0,\;\forall x\in\Omega,\;\forall r\in ]0,1],\quad \lambda^{(n)}(B_r(x)\cap\Omega)\ge c\, r^n,
\end{equation*}
where $\lambda^{(n)}$ denotes Lebesgue's measure on $\R^n$.
In particular, domains with an outgoing cusp cannot be extension domains, see Figure~\ref{Fig:Domains}.
On the other hand, all $(\varepsilon,\delta)$-domains (i.e., locally uniform domains) are extension domains~\cite[Theorem 1]{jones_quasiconformal_1981}.

\begin{figure}[ht]
\centering
\hspace*{\fill}\includegraphics[height=5 cm]{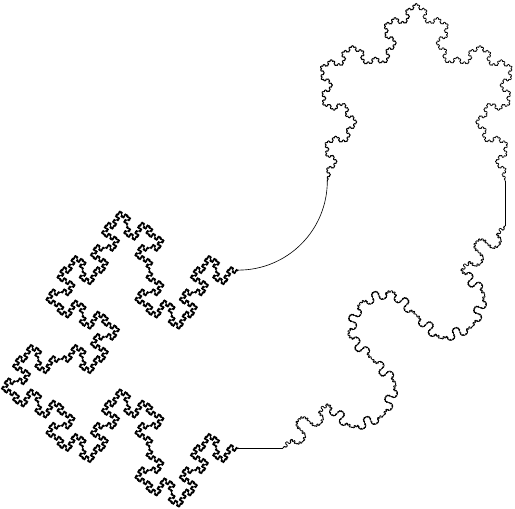}
\hfill\hfill\hfill
\includegraphics[height=5cm]{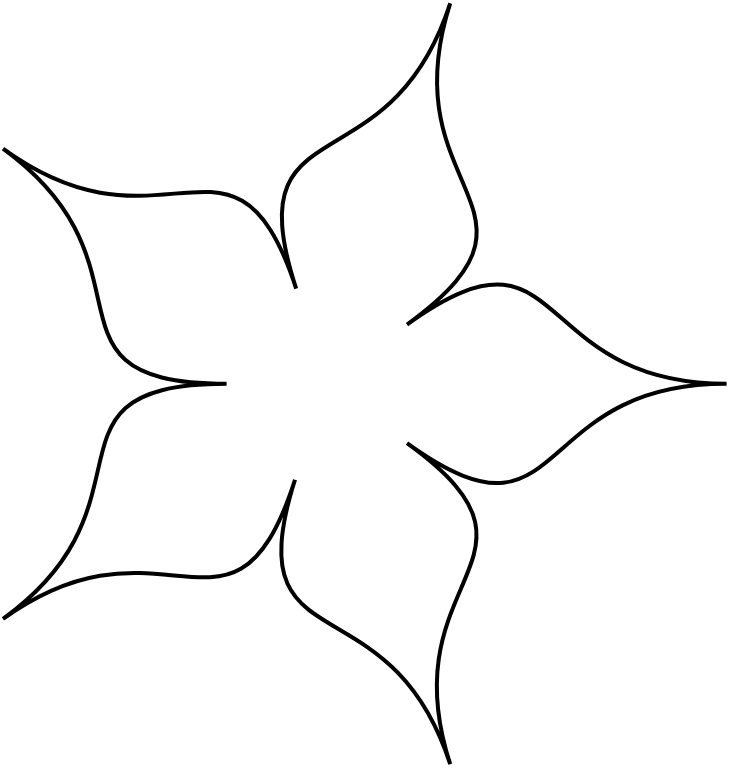}\hspace*{\fill}
\caption{On the left, an example of a two-sided admissible domain of $\R^2$ (it is locally uniform). Its boundary is composed of parts of different Hausdorff dimensions: three smooth lines of dimension $1$, a Koch curve of dimension $\frac{\ln 4}{\ln 3}$, a Minkowski curve of dimension $\frac{\ln 8}{\ln 4}$ and a pentagon-inspired fractal curve of dimension $\frac{\ln 14}{\ln 5}$.
On the right, a domain of $\R^2$ which is not an extension domain due to the presence of cusps.
The presence of ingoing cusps also prevents the complement domain from being an extension domain.}
\label{Fig:Domains}
\end{figure}

If $w\in H^1(\R^n)$, let $\tilde w$ denote one of its quasi-continuous representatives~\cite[Theorem 2.3.4]{chen_symmetric_2012}.
If $\Omega$ is an admissible domain, then for $u\in H^1(\Omega)$, the q.e. equivalence class $[(\mathrm{Ext}_\Omega u)^\sim|_{\bord}]$ of the restriction of $(\mathrm{Ext}_\Omega u)^\sim$ to $\bord$ depends on $u$ rather than on the choice of the extension $\mathrm{Ext}_\Omega$ or of the representative $(\mathrm{Ext}_\Omega u)^\sim$~\cite[Proposition 2.2]{claret_layer_2026}.
Denote by $\Hp$ the space of all $[\tilde w|_{\bord}]$ for $w\in H^1(\R^n)$.
Following~\cite[Theorem 6.1, Remark 6.2]{biegert_traces_2009}, we define the trace operator as in~\cite{claret_layer_2026}.

\begin{definition}[Trace operators]\label{Def:Trace}
Let $\Omega$ be a domain of $\R^n$. If $\Omega$ is admissible, then we define the interior trace operator on $\bord$ as
\begin{align*}
\Tr^i_{\bord}:H^1(\Omega)&\longrightarrow\Hp\\
u&\longmapsto[(\mathrm{Ext}_\Omega u)^\sim|_{\bord}],
\end{align*}
where $(\mathrm{Ext}_\Omega u)^\sim$ is a quasi-continuous representative of $\mathrm{Ext}_\Omega u\in H^1(\R^n)$.\\
If $\overline{\Omega}^c$ is admissible, then we define the exterior trace operator on $\bord$ as
\begin{align*}
\Tr^e_{\bord}:H^1(\overline{\Omega}^c)&\longrightarrow\Hp\\
u&\longmapsto[(\mathrm{Ext}_{\overline{\Omega}^c} u)^\sim|_{\bord}].
\end{align*}
If $\Omega$ is two-sided admissible, then we define the jump in trace across $\bord$ as
\begin{equation*}
\llbracket\Tr_{\bord}u\rrbracket\coloneq\Tr^i_{\bord}u-\Tr^e_{\bord}u,\quad u\in H^1(\nbord).
\end{equation*}
\end{definition}

In the definition of the jump in trace, we denoted $\Tr^i_{\bord}u$ (resp. $\Tr^e_{\bord}u$) for $u\in H^1(\nbord)$, in the sense of $\Tr^i_{\bord}(u|_\Omega)$ (resp. $\Tr^e_{\bord}(u|_{\overline{\Omega}^c})$).
When it is clear from the context, we shall continue to use such notations.

If $\Omega$ is an admissible domain, consider the space of $1$-harmonic functions in $H^1$
\begin{equation}\label{Eq:space-Vi}
V_{\mathrm i}(\Omega)\coloneq\big\{v\in H^1(\Omega)\;\big|\;\Delta u=u\mbox{ weakly on }\Omega\big\}.
\end{equation}
It is a closed subspace of $H^1(\Omega)$, and will be endowed with the norm $\|\cdot\|_{H^1(\Omega)}$.
It is also is closed in the Hilbert space
\begin{equation}\label{Eq:H1-Delta}
H^1_\Delta(\Omega)\coloneq\big\{u\in H^1(\Omega)\;\big|\;\Delta u\in L^2(\Omega)\big\},
\end{equation}
endowed with the norm
\begin{equation*}
\|\cdot\|_{H^1_{\Delta}(\Omega)}\coloneq\sqrt{\|\cdot\|_{H^1(\Omega)}^2+\|\Delta\cdot\|_{L^2(\Omega)}^2}.
\end{equation*}
It was proved in~\cite[Theorem 2.9]{claret_layer_2026} that the space $V_{\mathrm i}$ (denoted by $V_1$ in that other paper) is closely connected to the trace operator defined in Definition~\ref{Def:Trace}.
The following theorem relies notably on~\cite[Theorem 1]{hinz_existence_2021},~\cite[Theorem 5.1]{hinz_non-lipschitz_2021},~\cite[Theorem 2]{dekkers_mixed_2022} and~\cite[Corollary 2.3.1 and Example 2.3.1]{fukushima_dirichlet_2010} to establish fundamental properties of the trace operator.

\begin{theorem}[Trace theorem]\label{Th:Trace}
Let $\Omega$ be an admissible domain of $\R^n$. The following assertions hold.
\begin{enumerate}
\item[(i)] The space $H^1_0(\Omega)$, defined as the closure of $\mathscr{D}(\Omega)$ in $H^1(\Omega)$, is the kernel of $\Tr^i_{\bord}$ and the orthogonal complement of $V_{\mathrm i}(\Omega)$.
\item[(ii)] The space $\Hp$ is a Hilbert space when endowed with the norm
\begin{equation*}
\|f\|_{\Tr^i_{\bord}}\coloneq\min\big\{\|v\|_{H^1(\Omega)}\;\big|\;v\in H^1(\Omega)\mbox{ and }\Tr^i_{\bord}v=f\big\},
\end{equation*}
where the minimum is uniquely achieved by $v\in V_{\mathrm i}(\Omega)$ with $\Tr^i_{\bord}v=f$.
\item[(iii)] With respect to $\|\cdot\|_{\Tr^i_{\bord}}$, $\Tr^i_{\bord}$ is bounded with operator norm one.
Its restriction to $V_{\mathrm i}(\Omega)$ is an isometry onto $\Hp$.
\end{enumerate}
If $\overline{\Omega}^c$ is admissible, then counterparts of (i), (ii) and (iii) hold for $\Tr^e_{\bord}$ instead of $\Tr^i_{\bord}$, replacing $\Omega$ with $\overline{\Omega}^c$.
In particular, the norm
\begin{equation*}
\|f\|_{\Tr^e_{\bord}}\coloneq\min\big\{\|v\|_{H^1(\overline{\Omega}^c)}\;\big|\;v\in H^1(\overline{\Omega}^c)\mbox{ and }\Tr^e_{\bord}v=f\big\},
\end{equation*}
is a Hilbert space norm on $\Hp$.
\end{theorem}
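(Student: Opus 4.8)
\emph{Plan.} The whole statement rests on identifying the kernel of the trace with $H^1_0(\Omega)$ and on the orthogonal decomposition this produces; once that is in place, (ii) and (iii) are a formal consequence of the Hilbert projection theorem. I would begin with the variational identity $V_{\mathrm i}(\Omega)=H^1_0(\Omega)^{\perp}$ for the standard inner product $\langle u,w\rangle_{H^1(\Omega)}=\int_\Omega\nabla u\cdot\overline{\nabla w}+\int_\Omega u\overline w$. The point is that, for $v\in H^1(\Omega)$ and $\phi\in\mathscr D(\Omega)$, Green's identity (with no boundary term, since $\phi$ is compactly supported) gives $\langle v,\phi\rangle_{H^1(\Omega)}=\langle v,(1-\Delta)\phi\rangle_{L^2(\Omega)}$. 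Because $\mathscr D(\Omega)$ is stable under complex conjugation, the vanishing of this pairing for all $\phi$ is exactly the weak equation $(\Delta-1)v=0$, i.e. $v\in V_{\mathrm i}(\Omega)$ (recall $\mathrm i^2=-1$). Since $H^1_0(\Omega)=\overline{\mathscr D(\Omega)}$, this yields $V_{\mathrm i}(\Omega)=H^1_0(\Omega)^\perp$; in particular $V_{\mathrm i}(\Omega)$ is closed and $H^1_0(\Omega)$ is its orthogonal complement, which is the second half of (i).

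Next I would establish $\ker\Tr^i_{\bord}=H^1_0(\Omega)$. For the inclusion $H^1_0(\Omega)\subseteq\ker\Tr^i_{\bord}$, every $\phi\in\mathscr D(\Omega)$ has zero boundary trace (its zero-extension is continuous and vanishes near $\bord$, and it agrees on $\Omega$ with $\mathrm{Ext}_\Omega\phi$, so the two share the same trace); then I would check that $\ker\Tr^i_{\bord}$ is closed, using the capacitary fact that $H^1(\R^n)$-convergence forces, along a subsequence, quasi-everywhere convergence of quasi-continuous representatives—so a limit of functions with zero q.e. boundary values again has zero q.e. boundary values. Combined with the density of $\mathscr D(\Omega)$, this gives the inclusion. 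The reverse inclusion $\ker\Tr^i_{\bord}\subseteq H^1_0(\Omega)$ is the main obstacle: it asserts that a function whose quasi-continuous $H^1(\R^n)$-extension vanishes q.e. on $\bord$ must lie in $H^1_0(\Omega)$. This is the genuinely nontrivial capacitary characterisation of $H^1_0$ on an irregular, possibly fractal boundary, and it is precisely where the Dirichlet-form machinery of the cited results is invoked, relying on $\Omega$ being an $n$-set with $\bord$ of positive capacity (\cite[Theorem 6.1, Remark 6.2]{biegert_traces_2009}, \cite[Corollary 2.3.1 and Example 2.3.1]{fukushima_dirichlet_2010}).

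With (i) in hand, parts (ii) and (iii) follow from the orthogonal decomposition $H^1(\Omega)=H^1_0(\Omega)\oplus V_{\mathrm i}(\Omega)$. For $f\in\Hp$ the fibre $\{v:\Tr^i_{\bord}v=f\}$ is a coset of the closed subspace $H^1_0(\Omega)$, nonempty since any $H^1(\R^n)$-representative of $f$ restricts to a preimage, so its unique element of minimal $H^1(\Omega)$-norm is its orthogonal projection onto $V_{\mathrm i}(\Omega)$; this is the asserted unique minimiser and shows the minimum is attained. Consequently $\Tr^i_{\bord}$ restricted to $V_{\mathrm i}(\Omega)$ is a norm-preserving bijection onto $(\Hp,\|\cdot\|_{\Tr^i_{\bord}})$—injective because $V_{\mathrm i}(\Omega)\cap\ker\Tr^i_{\bord}=\{0\}$, surjective by the preimage remark—so $\Hp$ inherits the Hilbert structure of the closed subspace $V_{\mathrm i}(\Omega)$, proving (ii). Finally $\|\Tr^i_{\bord}v\|_{\Tr^i_{\bord}}\le\|v\|_{H^1(\Omega)}$ by definition of the minimum, with equality exactly when $v\in V_{\mathrm i}(\Omega)$, giving operator norm one and the isometry statement of (iii).

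The exterior case is formally identical, replacing $\Omega$ by the admissible domain $\overline{\Omega}^c$: every step above used only admissibility, the variational characterisation, and the capacitary results, none of which require boundedness. I would note only that the unboundedness of $\overline{\Omega}^c$ is harmless here, because the relevant space $V_{\mathrm i}(\overline{\Omega}^c)$ is, by the same Green-identity computation, exactly $H^1_0(\overline{\Omega}^c)^\perp$ and hence automatically closed—this is the reason the choice $k=\mathrm i$ is convenient, as $k^2=-1$ lies in neither the Dirichlet nor the Neumann spectrum of $-\Delta$.
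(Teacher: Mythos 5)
Your proposal is correct and takes essentially the same route as the paper, which itself delegates this theorem to the cited literature (Biegert, Fukushima--Oshima--Takeda, Hinz et al., Dekkers et al., and Theorem 2.9 of the author's earlier layer-potential paper): the variational identity $V_{\mathrm i}(\Omega)=H^1_0(\Omega)^\perp$ via Green's identity, the capacitary identification $\ker\Tr^i_{\bord}=H^1_0(\Omega)$ whose hard inclusion you, like the paper, defer to the same spectral-synthesis/trace results, and the Hilbert projection theorem yielding (ii) and (iii). Your observation that boundedness is never used, so the exterior case is verbatim, matches the paper's treatment as well.
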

When there is no ambiguity, we will simply denote $\Tr^i_{\bord}u=\Tr^iu$, $\Tr^e_{\bord}u=\Tr^eu$ and $\llbracket\Tr_{\bord}u\rrbracket=\llbracket\Tr u\rrbracket$. If $\Tr^iu=\Tr^eu$ (for instance, if $u\in H^1(\R^n)$), we denote the common value by $\Tr u$.

If $\Omega$ is a two-sided admissible domain, the norms $\|\cdot\|_{\Tr^i_{\bord}}$ and $\|\cdot\|_{\Tr^e_{\bord}}$ are equivalent~\cite[Proposition 3.4]{claret_convergence_2025}.
In what follows, we endow $\Hp$ with the norm $\|\cdot\|_{\Tr^i_{\bord}}$, which will be denoted by $\|\cdot\|_{\Hp}$, so that
\begin{equation*}
\|f\|_{\Hp}=\min\big\{\|v\|_{H^1(\Omega)}\;\big|\;v\in H^1(\Omega)\mbox{ and }\Tr^i_{\bord}v=f\big\}.
\end{equation*}
By Theorem~\ref{Th:Trace} (iii), the inner product on $\Hp$ with respect to the norm $\|\cdot\|_{\Hp}$ is charaterised by
\begin{equation}\label{Eq:B_Product}
\forall f,\varphi\in\Hp,\quad\langle f,\varphi\rangle_{\Hp}=\langle u_f,u_\varphi\rangle_{H^1(\Omega)},
\end{equation}
where $u_f,u_\varphi\in V_{\mathrm i}(\Omega)$ with $\Tr^iu_f=f$ and $\Tr^iu_\varphi=\varphi$.
In what follows, the dual space of $\Hp$ will be denoted by $\Hm\coloneq\mathcal{L}(\Hp,\C)$, and the associated dual pairing will be denoted by
\begin{equation*}
\langle g,f\rangle_{\Bcal'\!,\,\Hp},\quad g\in\Hm,\;f\in\Hp.
\end{equation*}

\begin{remark}
Given their definitions, the trace operators are local in the sense that if $u_1=u_2$ on a neighbourhood of the boundary, then their traces coincide.
For that matter, the definition of the trace operators can be extended to elements of $H^1_{\mathrm{loc_b}}(\Omega)$ (defined in the introduction): if $u\in H^1_{\mathrm{loc_b}}(\Omega)$, we define $\Tr^i_{\bord}u\coloneq\Tr^i_{\bord}(\chi u)$ where $\chi\in\mathscr D(\R^n)$ is such that $\chi=1$ on a neighbourhood of the boundary.
Note that the choice of $\chi$ is irrelevant. In the same way, we extend the exterior trace operator as $\Tr^e_{\bord}:H^1_{\mathrm{loc_b}}(\overline{\Omega}^c)\to\Hp$, and the jump in trace as $\llbracket\Tr_{\bord}\cdot\rrbracket:H^1_{\mathrm{loc_b}}(\nbord)\to\Hp$.
\end{remark}

The class of admissible domains is broader than that of Lipschitz domains.
It contains non-smooth domains which can have fractal or multifractal boundaries and boundaries of varying Hausdorff dimensions for instance, which may not be smooth anywhere.
For that matter, the classical definition of the normal derivative cannot be used, the normal vector field not being defined in general.
Instead, we use Green's formula to define a weak notion of normal derivative, similar to~\cite{lancia_transmission_2002,claret_layer_2026} for instance.
With that in mind and since the solutions to the Helmholtz equation are not expected to be integrable at infinity, we consider the local variant of the space $H^1_\Delta(\Omega)$ from~\eqref{Eq:H1-Delta}, namely
\begin{equation*}
H^1_{\Delta, \mathrm{loc_b}}(\Omega)\coloneq\big\{u\in L^2_{\mathrm{loc_b}}(\Omega)\;\big|\;\forall r>0,\;u|_{\Omega\cap B_r(0)}\in H^1_\Delta(\Omega\cap B_r(0))\big\}.
\end{equation*}

If $\Omega$ (resp. $\overline{\Omega}^c$) is an admissible domain of $\R^n$, denote by $\mathrm E_{\bord}:\Hp\to H^1(\Omega)$ (resp. $\Hp\to H^1(\overline{\Omega}^c)$) a continuous right inverse to the interior (resp. exterior) trace operator such that $(\mathrm E_{\bord}f)_{f\in\Hp}$ is uniformly compactly supported on $\R^n$.
For instance, one can define $\mathrm E_{\bord}$ as $(\chi\mathrm{H}_{\bord})|_\Omega$ (resp. $(\chi\mathrm{H}_{\bord})|_{\overline{\Omega}^c}$), where $\chi\in\mathscr D(\R^n)$ is such that $\chi=1$ near $\bord$ and $\mathrm{H}_{\bord}$ is the bilateral $1$-harmonic extension mapping $f\in\Hp$ to the unique $u\in H^1(\nbord)$ such that $\Delta u=u$ on $\nbord$.

\begin{definition}[Weak normal derivatives]\label{Def:ddn_loc}
Let $\Omega$ be a domain of $\R^n$. If $\Omega$ is admissible, then for all $u\in H^1_{\Delta, \mathrm{loc_b}}(\Omega)$, define its (weak) interior normal derivative on $\bord$ as
\begin{align*}
\ddny ui\Big|_{\bord}:\Hp &\longrightarrow \C\\
f &\longmapsto \int_{\Omega}(\Delta u)\overline{\mathrm E_{\bord}f}\,\dx+\int_{\Omega}\nabla u\cdot\nabla(\overline{\mathrm E_{\bord}f})\,\dx.
\end{align*}
If $\overline{\Omega}^c$ is admissible, then for all $u\in H^1_{\Delta, \mathrm{loc_b}}(\overline{\Omega}^c)$, define its (weak) exterior normal derivative on $\bord$ as
\begin{align*}
\ddny ue\Big|_{\bord}:\Hp &\longrightarrow \C\\
f &\longmapsto -\int_{\overline{\Omega}^c}(\Delta u)\overline{\mathrm E_{\bord}f}\,\dx-\int_{\overline{\Omega}^c}\nabla u\cdot\nabla(\overline{\mathrm E_{\bord}f})\,\dx.
\end{align*}
If $\Omega$ is two-sided admissible, then for all $u\in H^1_{\Delta, \mathrm{loc_b}}(\nbord)$, define its jump in normal derivative across $\bord$ as
\begin{equation*}
\left\llbracket\ddn u\Big|_{\bord}\right\rrbracket=\ddny ui\Big|_{\bord}-\ddny ue\Big|_{\bord}.
\end{equation*}
\end{definition}

The sign convention adopted in the definitions of the normal derivatives corresponds, in the classical case, to defining them both by means of the outgoing normal vector to $\Omega$.
When there is no ambiguity, we will simply denote $\ddny ui\big|_{\bord}=\ddny ui$, $\ddny ue\big|_{\bord}=\ddny ue$ and $\big\llbracket\ddn u\big|_{\bord}\big\rrbracket=\big\llbracket\ddn u\big\rrbracket$.
If $\ddny ui=\ddny ue$ (for instance, if $u\in H^1_{\Delta, \mathrm{loc_b}}(\R^n)$), we denote the common value by $\ddn u$.
We prove the normal derivatives defined thus are elements of $\Hm$ and satisfy Green's formula.
This implies in particular that Definition~\ref{Def:ddn_loc} does not depend on the choice of $\mathrm E_{\bord}$.

\begin{theorem}[Green's formula]\label{Th:Green}
Let $\Omega$ be an admissible domain of $\R^n$. Let $u\in H^1_{\Delta, \mathrm{loc_b}}(\Omega)$. The following assertions hold.
\begin{enumerate}
\item[(i)] The normal derivative $\ddny ui$ is an element of $\Hm$.
\item[(ii)] For all $v\in H^1_c(\Omega)\coloneq\{w\in H^1(\Omega)\;|\;\operatorname{supp}w\mathrm{\ is\ bounded}\}$, it holds
\begin{equation}\label{Eq:Green-ext}
\left\langle\ddny ui,\Tr^iv\right\rangle_{\Bcal'\!,\,\Bcal(\bord)}=\int_{\Omega}(\Delta u)\bar v\,\dx+\int_{\Omega}\nabla u\cdot\nabla\bar v\,\dx.
\end{equation}
\item[(iii)] If $u\in H^1_\Delta(\Omega)$, then~\eqref{Eq:Green-ext} holds for all $v\in H^1(\Omega)$. It follows that the operator $\ddny{}i$ maps $H^1_\Delta(\Omega)$ to $\Hm$ continuously, where $\Hm$ is endowed with $\|\cdot\|_{\Hm}$ which is subordinate to $\|\cdot\|_{\Hp}=\|\cdot\|_{\Tr_{\bord}^i}$.
\end{enumerate}
If $\overline{\Omega}^c$ is admissible, then counterparts of (i), (ii) and (iii) hold for $\ddny{}e$ instead of $\ddny{}i$, replacing $\Omega$ with $\overline{\Omega}^c$ and reversing the sign of the right-hand side of~\eqref{Eq:Green-ext}.
\end{theorem}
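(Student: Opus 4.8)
The plan is to prove the interior assertions; the exterior ones follow verbatim after replacing $\Omega$ by $\overline{\Omega}^c$ and absorbing the global sign into the definition of $\ddny ue$. Everything reduces to understanding the antilinear form
\begin{equation*}
a_u(w)\coloneq\int_\Omega(\Delta u)\bar w\,\dx+\int_\Omega\nabla u\cdot\nabla\bar w\,\dx,
\end{equation*}
which is well defined for $u\in H^1_{\Delta,\mathrm{loc_b}}(\Omega)$ as soon as $w\in H^1(\Omega)$ has bounded support -- all integrands then being supported in some $\Omega\cap B_R(0)$, where $\Delta u,\nabla u\in L^2$ -- and for all $w\in H^1(\Omega)$ when $u\in H^1_\Delta(\Omega)$. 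By construction $\ddny ui(f)=a_u(\mathrm E_{\bord}f)$. The key step I would isolate first is the vanishing lemma: if $w\in H^1_0(\Omega)$ has bounded support, then $a_u(w)=0$.

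I expect this lemma to be the one genuinely delicate point. For $w\in\mathscr D(\Omega)$ it is nothing but the statement that $\Delta u\in L^2_{\mathrm{loc_b}}(\Omega)$ is the distributional Laplacian of $u$. To reach a general $w\in H^1_0(\Omega)$ of bounded support I would approximate it in $H^1(\Omega)$ by test functions $\varphi_j\in\mathscr D(\Omega)$ whose supports remain inside one fixed ball $B_R(0)\supset\operatorname{supp}w$; this is the usual cutoff-and-mollify construction, legitimate because $w$ lies in the $H^1$-closure of $\mathscr D(\Omega)$ by Theorem~\ref{Th:Trace}(i). Since $\Delta u$ and $\nabla u$ are square-integrable on $\Omega\cap B_R(0)$, the form $a_u$ is continuous for the $H^1$-topology among functions supported in that ball, so $a_u(w)=\lim_j a_u(\varphi_j)=0$. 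Keeping the approximating supports uniformly bounded is exactly what makes the argument survive the merely locally integrable setting, and is the crux of the proof.

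Assertion (i) then follows by a single Cauchy--Schwarz estimate. As the family $(\mathrm E_{\bord}f)_{f\in\Hp}$ is supported in a fixed ball $B_R(0)$ and $\mathrm E_{\bord}\colon\Hp\to H^1(\Omega)$ is continuous, using $u|_{\Omega\cap B_R(0)}\in H^1_\Delta(\Omega\cap B_R(0))$ I would bound
\begin{equation*}
\left|\ddny ui(f)\right|=|a_u(\mathrm E_{\bord}f)|\le\big(\|\Delta u\|_{L^2(\Omega\cap B_R(0))}^2+\|\nabla u\|_{L^2(\Omega\cap B_R(0))}^2\big)^{1/2}\|\mathrm E_{\bord}f\|_{H^1(\Omega)}\le C\|f\|_{\Hp},
\end{equation*}
so $\ddny ui\in\Hm$. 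The vanishing lemma also settles the implicit well-posedness of the definition: any two admissible extensions of a given $f$ differ by an element of $H^1_0(\Omega)$ of bounded support, on which $a_u$ vanishes, so $\ddny ui(f)$ does not depend on the choice of $\mathrm E_{\bord}$.

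For (ii), given $v\in H^1_c(\Omega)$ I would write $v=\mathrm E_{\bord}(\Tr^i v)+w$ with $w\coloneq v-\mathrm E_{\bord}(\Tr^i v)$. Since $\Tr^i_{\bord}\circ\mathrm E_{\bord}=\mathrm{Id}_{\Hp}$, we get $\Tr^i w=0$, hence $w\in H^1_0(\Omega)$ by Theorem~\ref{Th:Trace}(i), and $w$ has bounded support because $v$ does and $\mathrm E_{\bord}(\cdot)$ is uniformly compactly supported. The lemma gives $a_u(w)=0$, so
\begin{equation*}
\left\langle\ddny ui,\Tr^iv\right\rangle_{\Bcal',\Bcal(\bord)}=a_u(\mathrm E_{\bord}\Tr^iv)=a_u(v),
\end{equation*}
which is~\eqref{Eq:Green-ext}. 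For (iii), when $u\in H^1_\Delta(\Omega)$ the form $a_u$ is continuous on all of $H^1(\Omega)$, so the lemma extends to every $w\in H^1_0(\Omega)$ without the support restriction and the same splitting yields~\eqref{Eq:Green-ext} for arbitrary $v\in H^1(\Omega)$. Finally, evaluating against the minimal-norm representative $v\in V_{\mathrm i}(\Omega)$ with $\Tr^i v=f$ and $\|v\|_{H^1(\Omega)}=\|f\|_{\Hp}$ supplied by Theorem~\ref{Th:Trace}(ii), Cauchy--Schwarz gives $|\langle\ddny ui,f\rangle_{\Bcal',\Bcal(\bord)}|\le\|u\|_{H^1_\Delta(\Omega)}\|f\|_{\Hp}$, so $\ddny{}i\colon H^1_\Delta(\Omega)\to\Hm$ is continuous with operator norm at most one.
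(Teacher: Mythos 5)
Your proposal is correct, and its skeleton is the paper's: everything rests on the vanishing of the form $a_u$ on boundedly supported elements of $H^1_0(\Omega)$ (density of $\mathscr D(\Omega)$ plus square-integrability of $\Delta u$ and $\nabla u$ on bounded sets), on the Cauchy--Schwarz bound exploiting the uniformly compact supports of $(\mathrm E_{\bord}f)_{f\in\Hp}$ for (i), and on the splitting $v=\mathrm E_{\bord}(\Tr^iv)+w$ with $w\in H^1_0(\Omega)\cap H^1_c(\Omega)$ for (ii). The genuine divergence is in (iii). The paper stays with boundedly supported functions: it applies (ii) to $\chi_kv$ for cutoffs equal to $1$ on a neighbourhood of $\bord$ (so that $\Tr^i(\chi_kv)=\Tr^iv$ by locality of the trace) and passes to the limit by dominated convergence; note that such cutoffs must be taken in $\mathscr D(\R^n)$, equal to $1$ near the boundary --- not in $\mathscr D(\Omega)$ as written there, since an element of $\mathscr D(\Omega)$ would annihilate the trace. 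You instead exploit that, for $u\in H^1_\Delta(\Omega)$, the form $a_u$ is continuous on all of $H^1(\Omega)$, so the vanishing lemma extends to the whole of $H^1_0(\Omega)$ by density alone, and the splitting of (ii) then applies verbatim to every $v\in H^1(\Omega)$. Your variant buys a cleaner (iii) --- no cutoff sequence, no dominated convergence --- and, by evaluating against the minimal-norm extension from Theorem~\ref{Th:Trace}, the sharper quantitative conclusion $\|\ddny ui\|_{\Hm}\le\|u\|_{H^1_\Delta(\Omega)}$, i.e.\ operator norm at most one, where the paper's route through $\mathrm E_{\bord}$ only yields the constant $\|\mathrm E_{\bord}\|$. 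Two further points of your write-up are worth keeping: the fixed-ball approximation (multiplying $H^1$-approximants of $w$ by one fixed cutoff) makes explicit the uniform support control that the paper delegates to its citation of a density theorem, and which is precisely what legitimises the limit $a_u(\varphi_j)\to a_u(w)$ when $u$ is merely $H^1_{\Delta,\mathrm{loc_b}}$; and your remark that the vanishing lemma makes $\ddny ui$ independent of the choice of $\mathrm E_{\bord}$ settles a well-definedness issue the paper leaves implicit.
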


\begin{proof}
(i) By definition of $\mathrm E_{\bord}$, there exists a compact set $K$ such that for all $f\in\Hp$, $\operatorname{supp}(\mathrm{E}_{\bord}f)\subset K$. The result follows by the Cauchy-Schwarz inequality.\\
(ii) Let $\varphi\in \mathscr D(\Omega)$. Then, by weak derivation, it holds
\begin{equation}\label{Eq:Green-proof}
\int_{\Omega}(\Delta u)\bar \varphi\,\dx+\int_{\Omega}\nabla u\cdot\nabla\bar \varphi\,\dx=0.
\end{equation}
Since any element of $H^1_0(\Omega)\cap H^1_c(\Omega)$ is the limit of a sequence of $\mathscr D(\Omega)$ by definition, we can deduce that~\eqref{Eq:Green-proof} holds for all $\varphi\in H^1_0(\Omega)\cap H^1_c(\Omega)$.
By linearity, it follows that the right-hand side of~\eqref{Eq:Green-ext} only depends on $\Tr^iv$.\\
(iii) Let $(\chi_\ell)_{\ell\in\N}\in \mathscr D(\Omega)^\N$ be a non-decreasing sequence growing to $1$. Then, for all $v\in H^1(\Omega)$ and $\ell\in\N$, it holds
\begin{align*}
\left\langle\ddny ui,\Tr^iv\right\rangle_{\Bcal'\!,\,\Bcal(\bord)}&=\left\langle\ddny ui,\Tr^i(\chi_\ell v)\right\rangle_{\Bcal'\!,\,\Bcal(\bord)}\\
&=\int_{\Omega}(\Delta u)\chi_\ell\bar v\,\dx+\int_{\Omega}\nabla u\cdot\nabla(\chi_\ell\bar v)\,\dx.
\end{align*}
The identity follows by dominated convergence.
The continuity of $\ddny{}i$ follows from that of $\mathrm E_{\bord}$ and the Cauchy-Schwarz inequality.
\end{proof}

In particular -- and as expected --, the weak normal derivative only depends on the behaviour near the boundary.

\begin{lemma}
Let $\Omega$ be an admissible domain of $\R^n$. Let $u_1,u_2\in H^1_{\Delta, \mathrm{loc_b}}(\Omega)$. Assume $u_1=u_2$ on a neighbourhood of $\bord$. Then, it holds
\begin{equation*}
\ddny {u_1}i=\ddny{u_2}i.
\end{equation*}
If $\overline{\Omega}^c$ is admissible, then a counterpart of this statement holds for $\ddny{}e$ instead of $\ddny{}i$, replacing $\Omega$ with $\overline{\Omega}^c$.
\end{lemma}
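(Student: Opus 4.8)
The plan is to reduce to a single function and then localise the defining integral near $\bord$. Since the map $u\mapsto\ddny ui$ is linear in $u$ by construction, I would set $w\coloneq u_1-u_2\in H^1_{\Delta,\mathrm{loc_b}}(\Omega)$ and show that $\ddny wi=0$; the hypothesis then reads that $w$ vanishes on some neighbourhood $U$ of $\bord$. The first observation is that, $w$ being identically zero on the open set $U\cap\Omega$, its weak Laplacian $\Delta w$ and its gradient $\nabla w$ vanish almost everywhere on $U\cap\Omega$.

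Next I would fix $f\in\Hp$ and write $g\coloneq\mathrm E_{\bord}f$, which lies in $H^1_c(\Omega)$ since the family $(\mathrm E_{\bord}f)_{f\in\Hp}$ is uniformly compactly supported and satisfies $\Tr^i g=f$, so that $\ddny wi(f)=\int_\Omega(\Delta w)\bar g\,\dx+\int_\Omega\nabla w\cdot\nabla\bar g\,\dx$. The key step is to trade $g$ for a representative of its trace that is supported inside $U$. To that end I would pick a cut-off $\chi\in\mathscr D(\R^n)$ equal to $1$ on a neighbourhood of $\bord$ with $\operatorname{supp}\chi\subset U$, and split $g=\chi g+(1-\chi)g$.

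The corrector $(1-\chi)g$ vanishes on a neighbourhood of $\bord$, hence belongs to $H^1_c(\Omega)$ and has zero interior trace, so by Theorem~\ref{Th:Trace}(i) it lies in $H^1_0(\Omega)\cap H^1_c(\Omega)$. Precisely as in the proof of Theorem~\ref{Th:Green}(ii), where identity~\eqref{Eq:Green-proof} is pushed from $\mathscr D(\Omega)$ to $H^1_0(\Omega)\cap H^1_c(\Omega)$ by density, its contribution to $\ddny wi(f)$ vanishes. The remaining term $\chi g$ is supported in $\operatorname{supp}\chi\subset U$, where $\Delta w=0$ and $\nabla w=0$ almost everywhere, so it contributes nothing either. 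Hence $\ddny wi(f)=0$, and since $f\in\Hp$ is arbitrary, $\ddny wi=0$, that is $\ddny{u_1}i=\ddny{u_2}i$. The exterior statement follows by the same argument with $\Omega$ replaced by $\overline{\Omega}^c$ and $\Tr^i$ by $\Tr^e$, the minus sign in the definition of the exterior normal derivative being immaterial since the quantity is zero. I do not expect a genuine obstacle here: the only points needing care are that a function vanishing on an open set has vanishing weak Laplacian and gradient there, and that $(1-\chi)g$ sits in $H^1_0(\Omega)\cap H^1_c(\Omega)$ so that the density argument of Theorem~\ref{Th:Green}(ii) applies.
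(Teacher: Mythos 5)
Your proof is correct and follows essentially the same route as the paper's: reduce by linearity to a function vanishing on a neighbourhood $U$ of $\bord$, introduce a cutoff $\chi\in\mathscr D(\R^n)$ supported in $U$ and equal to $1$ near $\bord$, and observe that replacing the test function by one with the same trace does not change the Green pairing, after which the remaining integrals vanish where the function is zero. The only cosmetic difference is that the paper invokes Theorem~\ref{Th:Green}(ii) wholesale (the pairing depends only on $\Tr^i v$), whereas you re-run its underlying density argument for the zero-trace piece $(1-\chi)g\in H^1_0(\Omega)\cap H^1_c(\Omega)$; both rest on exactly the same ingredients.
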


\begin{proof}
By linearity, assume $u=0$ on a neighbourhood $U$ of $\bord$.
Let $\chi\in \mathscr D(\R^n)$ be supported on $U$ and equal to $1$ on a neighbourhood of $\bord$.
Since, for all $v\in H^1(\Omega)$, it holds $\Tr^iv=\Tr^i(\chi v)$, and $u$ and $\chi v$ have disjoint supports, the result follows from~\eqref{Eq:Green-ext}.
\end{proof}

The isometric properties of the interior trace operator (Theorem~\ref{Th:Trace} (iii)) are inherited by the interior normal derivation operator, and allow to express the Riesz isometry on the trace space as the Poincaré-Steklov (Dirichlet-to-Neumann) operator for $(\Delta-1)$ on the interior domain~\cite[Lemma 3.1, Eq.~(21)]{claret_layer_2026}.

\begin{proposition}[{\cite{claret_layer_2026}}]
Let $\Omega$ be an admissible domain.
The restriction to $V_{\mathrm i}(\Omega)$ of the interior normal derivation operator $\ddny{}i$ is isometric onto $\Hm$.
The Riesz isometry between the trace space $\Hp$ and its dual $\Hm$ is given by the Poincaré-Steklov operator for $(\Delta-1)$ on $\Omega$, that is
\begin{equation}\label{Eq:PS-i}
\begin{aligned}
\Lambda_{\mathrm i}: \Hp & \longrightarrow \Hm\\
\Tr^i u&\longmapsto \ddny ui,
\end{aligned}
\end{equation}
where $u\in V_{\mathrm i}(\Omega)$ defined by~\eqref{Eq:space-Vi}.
\end{proposition}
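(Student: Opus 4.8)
The plan is to read off the statement from three facts already in hand: Green's formula (Theorem~\ref{Th:Green}), the isometric identification of the trace space $\Hp$ with the space $V_{\mathrm i}(\Omega)$ of $1$-harmonic functions (Theorem~\ref{Th:Trace}), and the Riesz representation theorem on the Hilbert space $\Hp$. The whole argument is essentially a matter of recognising the weak normal derivative of a $1$-harmonic function as a Riesz functional.

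First I would note that any $u\in V_{\mathrm i}(\Omega)$ solves $\Delta u=u$ weakly, so $\Delta u\in L^2(\Omega)$ and $u\in H^1_\Delta(\Omega)$. Theorem~\ref{Th:Green}(iii) then guarantees that $\ddny ui\in\Hm$ and that, for every $v\in H^1(\Omega)$,
\[
\left\langle\ddny ui,\Tr^iv\right\rangle_{\Bcal',\Bcal(\bord)}=\int_{\Omega}(\Delta u)\bar v\,\dx+\int_{\Omega}\nabla u\cdot\nabla\bar v\,\dx=\int_{\Omega}u\bar v\,\dx+\int_{\Omega}\nabla u\cdot\nabla\bar v\,\dx=\langle u,v\rangle_{H^1(\Omega)},
\]
the middle equality being exactly the equation $\Delta u=u$. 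Thus the functional $\ddny ui$ evaluates a trace $\Tr^iv$ by taking the $H^1(\Omega)$-inner product of $u$ against $v$.

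Next I would invoke Theorem~\ref{Th:Trace}(ii)--(iii): the restriction $\Tr^i|_{V_{\mathrm i}(\Omega)}$ is an isometric isomorphism onto $\Hp$, so by polarisation it carries the $H^1(\Omega)$-inner product to the inner product of $\Hp$, i.e. $\langle\Tr^iu,\Tr^iv\rangle_{\Hp}=\langle u,v\rangle_{H^1(\Omega)}$ for $u,v\in V_{\mathrm i}(\Omega)$. Since every $g\in\Hp$ equals $\Tr^iv$ for a (unique) $v\in V_{\mathrm i}(\Omega)$, combining this with the previous display gives, for each $u\in V_{\mathrm i}(\Omega)$,
\[
\left\langle\ddny ui,g\right\rangle_{\Bcal',\Bcal(\bord)}=\langle\Tr^iu,g\rangle_{\Hp}\qquad\text{for all }g\in\Hp.
\]
This says precisely that $\ddny ui$ is the image of $\Tr^iu$ under the Riesz isometry $\Hp\to\Hm$, which is the claim that $\Lambda_{\mathrm i}$ \emph{is} that Riesz isometry. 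The stated isometry of the restriction $\ddny{}i|_{V_{\mathrm i}(\Omega)}$ onto $\Hm$ then follows from the factorisation $\ddny{}i|_{V_{\mathrm i}(\Omega)}=\Lambda_{\mathrm i}\circ\Tr^i|_{V_{\mathrm i}(\Omega)}$, a composition of two isometric isomorphisms, hence itself an isometric isomorphism onto $\Hm$.

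The only delicate point is the bookkeeping of sesquilinear conventions. The weak normal derivative is defined through $\overline{\mathrm E_{\bord}f}$, so it is an \emph{antilinear} functional of $f$ and therefore lands naturally in the antidual $\Hm$, which is the side on which the Riesz map $w\mapsto\langle w,\cdot\rangle_{\Hp}$ takes values; I would make sure the conjugations line up so that no stray complex conjugate appears in the final identity. I would also record that replacing $\mathrm E_{\bord}g$ by an arbitrary $v\in V_{\mathrm i}(\Omega)$ with $\Tr^iv=g$ is legitimate, which is exactly the content of Theorem~\ref{Th:Green}(ii)--(iii): the right-hand side of Green's formula depends on $v$ only through its trace. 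Beyond these conventions I expect no genuine analytic difficulty, the statement reducing to the three cited results.
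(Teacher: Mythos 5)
Your proof is correct. The paper itself contains no proof of this proposition — it is imported by citation from~\cite[Equation (21)]{claret_layer_2025} — so there is nothing internal to compare against; your derivation (Green's formula from Theorem~\ref{Th:Green}(iii) with $\Delta u=u$, the isometry $\Tr^i|_{V_{\mathrm i}(\Omega)}$ from Theorem~\ref{Th:Trace}(ii)--(iii) upgraded to inner products by polarisation, then Riesz representation in the antidual) is exactly the argument the statement is designed to follow from, and your handling of the antilinear convention is consistent with the paper's definition of the weak normal derivative via $\overline{\mathrm E_{\bord}f}$.
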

In particular, it follows from~\eqref{Eq:B_Product} that the dual pairing $(\Hm,\Hp)$ is characterised, for $f\in\Hp$ and $g\in\Hm$, by
\begin{equation*}
\langle g,f\rangle_{\Bcal'\!,\,\Hp}=\langle\Lambda_{\mathrm i}^{-1}g,f\rangle_{\Hp}=\langle u,v\rangle_{H^1(\Omega)},
\end{equation*}
where $u,v\in V_{\mathrm i}(\Omega)$ with $\ddny ui=g$ and $\Tr^iv=f$.

\section{Helmholtz transmission problem}\label{Sec:Tr-Prob}

The trace and normal derivation operators introduced in Section~\ref{Sec:Framework} allow to use variational methods to solve problems set on spaces of locally $H^1$ functions (in the sense of bounded sets specified in the introduction).
If $\Omega$ is a two-sided admissible domain of $\R^n$ and if $k>0$, consider the following transmission problem for the Helmholtz equation
\begin{equation}\label{Eq:Helm-Prob}
\begin{cases}
(\Delta+k^2)u=0 &\mbox{on }\nbord,\\
\llbracket\Tr_{\bord} u\rrbracket=f,\\
\llbracket\ddn u|_{\bord}\rrbracket=g,\\
\mathrm{SRC},
\end{cases}
\end{equation}
for given jumps in trace $f\in\Hp$ and in normal derivative $g\in\Hm$, where $\mathrm{SRC}$ stands for the Sommerfeld Radiation Condition~\cite{schot_eighty_1992, sommerfeld_greensche_1912}, that is
\begin{equation}\label{Eq:SRC}
\left(\frac\partial{\partial|x|}-\mathrm ik\right)u(x)=\underset{|x|\to\infty}{o}\big(|x|^{\frac{1-n}2}\big)\qquad\mbox{uniformly in }\frac x{|x|}.
\end{equation}
That condition models the fact that the energy of the underlying wave propagates towards infinity, instead of being infused to the system from infinity (which would be unphysical).

\subsection{Layer potential operators}\label{Subsec:LP}

The classical approach to solve the transmission problem~\eqref{Eq:Helm-Prob} is to decompose the solution into parts with a homogeneous jump in trace or in normal derivative.
Those parts are then determined by means of the layer potential operators~\cite{vladimirov_equations_1971, verchota_layer_1984}.

\subsubsection{Single layer potential}\label{Subsubsec:SLP}

First, let us consider~\eqref{Eq:Helm-Prob} with a homogeneous jump in trace, that is, for $g\in\Hm$,
\begin{equation}\label{Eq:SLP-Helm-Prob}
\begin{cases}
(\Delta+k^2)u=0 &\mbox{on }\nbord,\\
\llbracket\Tr_{\bord} u\rrbracket=0,\\
\llbracket\ddn u|_{\bord}\rrbracket=g,\\
\mathrm{SRC}.
\end{cases}
\end{equation}
The solutions to~\eqref{Eq:SLP-Helm-Prob} are not expected to be $H^1$ at infinity.
To derive a variational formulation for the Helmholtz transmission problem, we introduce a truncation (see~\cite[§~2.6.4]{nedelec_acoustic_2011}).
Let $B$ be a large ball containing $\Omega$.
If $u\in H^1_{\mathrm{loc_b}}(\R^n)$ is a solution to~\eqref{Eq:SLP-Helm-Prob}, then, for all $v\in H^1(B)$, it holds
\begin{equation}\label{Eq:SLP-Helm-FV}
\int_{B}\nabla u\cdot\nabla\bar v\,\dx-k^2\int_{B}u\bar v\,\dx-\int_{\partial B}\tilde\Lambda_{\partial B}(\Tr_{\partial B}u)\Tr_{\partial B}\bar v\,\dsig=\langle g,\Tr_{\bord}v\rangle_{\Bcal'\!,\,\Bcal(\bord)},
\end{equation}
where $\tilde\Lambda_{\partial B}$ denotes the Poincaré-Steklov operator for the Helmholtz problem outside $B$ with the $\mathrm{SRC}$, defined by
\begin{equation}\label{Eq:Lambda_tilde}
\begin{aligned}
\tilde\Lambda_{\partial B}:\Bcal(\partial B)&\longrightarrow\Bcal'(\partial B)\\
\Tr_{\partial B}^ev &\longmapsto \ddny ve\Big|_{\partial B},
\end{aligned}
\end{equation}
where $(\Delta+k^2)v=0$ on $\overline{B}^c$ and $v\in H^1_{\mathrm{loc_b}}(\overline{B}^c)$ satisfies the $\mathrm{SRC}$~\eqref{Eq:SRC}.

\begin{remark}\label{Rem:PS_B'-L2}
In~\eqref{Eq:SLP-Helm-FV}, $\tilde\Lambda_{\partial B}(\Tr_{\partial B}u)$ is thought of as an element of $L^2(\partial B,\sigma)$ instead of $\Bcal'(\partial B)$.
Since $B$ is smooth, then, up to equivalent norms, $\Bcal'(\partial B)$ coincides with $H^{-\frac12}(\partial B)$~\cite[Chapter IV, Appendix]{dautray_mathematical_1988-1}, so that $L^2(\partial B,\sigma)\hookrightarrow\Bcal'(\partial B)$.
Moreover, since $u$ is a solution to~\eqref{Eq:SLP-Helm-Prob}, it holds $\tilde\Lambda_{\partial B}(\Tr_{\partial B}u)=\ddn u|_{\partial B}\in C^\infty(\partial B)$ by elliptic regularity, which allows to write the boundary term on $\partial B$ as an integral.
% We refer to Proposition~\ref{Prop:Embeddings} in the next chapter for a more general discussion.
\end{remark}

It is known~\cite[Lemma 2.1]{chandler-wilde_wave-number-explicit_2008} that for all $f\in\Bcal(\partial B)$, it holds
\begin{equation}\label{Eq:Ineq-Re-PS}
\mathrm{Re}\left(\int_{\partial B}(\tilde\Lambda_{\partial B} f)\bar f\,\dsig \right)\le 0.
\end{equation}

The fact that any solution to the transmission problem~\eqref{Eq:SLP-Helm-Prob} solves the variational formulation~\eqref{Eq:SLP-Helm-FV} follows from Green's formula (Theorem~\ref{Th:Green}).
The converse is also true.

\begin{lemma}\label{Lem:Truncation}
Let $\Omega$ be a two-sided admissible domain of $\R^n$ and let $k>0$.
Let $B$ be a large ball containing $\Omega$.
Then, the restriction to $B$ of any weak solution to~\eqref{Eq:SLP-Helm-Prob} solves~\eqref{Eq:SLP-Helm-FV}.
Conversely, any solution to~\eqref{Eq:SLP-Helm-FV} can be extended as a weak solution to~\eqref{Eq:SLP-Helm-Prob}.
\end{lemma}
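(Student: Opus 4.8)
The plan is to prove the two implications separately, in both cases localising the global problem to the bounded region $B$ and transferring the exterior behaviour through the Poincaré–Steklov operator $\tilde\Lambda$. Write $\Omega_B\coloneq B\setminus\overline{\Omega}$ for the bounded annular region between $\bord$ and $\partial B$. I use that $\bord$ is Lebesgue-null (Definition~\ref{Def:Admissible-Dom}(v)) to split $\int_B=\int_\Omega+\int_{\Omega_B}$ on volume integrals, and that $u$ solves the equation in a full neighbourhood of $\partial B$ (which lies in $\nbord$ since $B$ strictly contains $\overline{\Omega}$), so that by interior elliptic regularity $u$ is smooth across $\partial B$ and has a genuine two-sided normal derivative there. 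For the forward implication, let $u\in H^1_{\mathrm{loc_b}}(\R^n)$ solve~\eqref{Eq:SLP-Helm-Prob}; then $u|_B\in H^1(B)$ and the zero jump in trace is automatic. Fix $v\in H^1(B)$. Green's formula (Theorem~\ref{Th:Green}) on $\Omega$, where $\Delta u=-k^2u$, gives $\langle\ddny ui,\Tr^i v\rangle_{\Bcal',\Bcal(\bord)}=\int_\Omega\nabla u\cdot\nabla\bar v\,\dx-k^2\int_\Omega u\bar v\,\dx$. On $\Omega_B$ it gives the same volume integrals over $\Omega_B$ equal to a boundary contribution on $\bord$ — which, since the outward normal of $\Omega_B$ on $\bord$ is opposite to the one defining $\ddny ue$, equals $-\langle\ddny ue,\Tr^e v\rangle_{\Bcal',\Bcal(\bord)}$ — plus a contribution on $\partial B$ against the $\partial B$-normal derivative of $u$. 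By the SRC and the definition of $\tilde\Lambda$, the latter equals $\int_{\partial B}\tilde\Lambda(\Tr_{\partial B}u)\Tr_{\partial B}\bar v\,\dsig$. Adding the two identities, using $\Tr^i v=\Tr^e v=\Tr_{\bord}v$ (as $v$ is single-valued on $B$) and $\llbracket\ddn u\rrbracket=g$, collapses the right-hand side to $\langle g,\Tr_\bord v\rangle_{\Bcal',\Bcal(\bord)}$, yielding exactly~\eqref{Eq:SLP-Helm-FV}.

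For the converse, let $u\in H^1(B)$ solve~\eqref{Eq:SLP-Helm-FV}. I first extend $u$ to $\R^n$ by setting it, on $\overline B^c$, equal to the unique solution of the exterior Helmholtz–Dirichlet problem with datum $\Tr_{\partial B}u$ and the SRC; call the result $\hat u\in H^1_{\mathrm{loc_b}}(\R^n)$, continuous across $\partial B$. Testing~\eqref{Eq:SLP-Helm-FV} against $\varphi\in\mathscr D(\Omega)$ and against $\varphi\in\mathscr D(\Omega_B)$ extended by zero — both having vanishing traces on $\bord$ and on $\partial B$ — shows $(\Delta+k^2)\hat u=0$ weakly on $\Omega$ and on $\Omega_B$, while on $\overline B^c$ it holds by construction. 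To glue across $\partial B$, I test~\eqref{Eq:SLP-Helm-FV} against $v\in H^1(B)$ with $\Tr_\bord v=0$ and free trace on $\partial B$: Green's formula on $\Omega$ and $\Omega_B$, whose $\bord$-boundary terms now vanish, identifies the interior $\partial B$-normal derivative of $u$ with $\tilde\Lambda(\Tr_{\partial B}u)$, which by definition of $\tilde\Lambda$ coincides with the $\partial B$-normal derivative of the exterior extension. Matching Dirichlet and Neumann traces on the smooth interface $\partial B$ lets me conclude, by a standard gluing argument, that $\hat u$ solves the equation across $\partial B$, hence on all of $\nbord$; the SRC holds by construction.

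It remains to recover the jump conditions for $\hat u$. The zero jump in trace is immediate since $u$ is single-valued on $B$. For the normal-derivative jump I reverse the forward computation: now that the $\partial B$-normal derivative of $\hat u$ equals $\tilde\Lambda(\Tr_{\partial B}u)$, adding the interior and exterior Green identities turns the left-hand side of~\eqref{Eq:SLP-Helm-FV} into $\langle\ddny ui-\ddny ue,\Tr_\bord v\rangle_{\Bcal',\Bcal(\bord)}$, which~\eqref{Eq:SLP-Helm-FV} equates to $\langle g,\Tr_\bord v\rangle_{\Bcal',\Bcal(\bord)}$; since $\Tr_\bord$ maps $H^1(B)$ onto $\Hp$ (Theorem~\ref{Th:Trace}), this forces $\llbracket\ddn{\hat u}\rrbracket=g$ in $\Hm$. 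The main obstacle I anticipate is the artificial interface $\partial B$: one must set up Green's formula on the annulus $\Omega_B$ so that a well-defined Neumann trace on $\partial B$ appears, reconcile the weak exterior normal derivative on $\bord$ (defined in Theorem~\ref{Th:Green} through integrals over all of $\overline{\Omega}^c$, legitimate here because the extension $\mathrm E_\bord$ has uniformly compact support inside $B$) with the purely local computation on $\Omega_B$, and justify the gluing across $\partial B$ together with the careful orientation bookkeeping of the normals on $\bord$ and $\partial B$. The solvability of the exterior Dirichlet problem used to build the extension may additionally require excluding the exceptional values of $k$ for which $\tilde\Lambda$ degenerates.
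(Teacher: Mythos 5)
Your proof is correct and follows essentially the same route as the paper: Green's formula on $\Omega$ and on the annulus $B\setminus\overline{\Omega}$ for the forward direction, and, for the converse, gluing $u$ with the exterior Dirichlet/SRC solution across $\partial B$ after identifying the interior Neumann trace on $\partial B$ with $\tilde\Lambda(\Tr_{\partial B}u)$ — precisely the paper's construction $\tilde u = u\oplus\mathrm{E}_{\partial B}(\Tr_{\partial B}u)$ (your proof is in fact slightly more complete, since you verify the jump condition $\llbracket\partial u/\partial\nu\rrbracket=g$ explicitly, which the paper leaves implicit). Your closing worry about exceptional wavenumbers is unnecessary: the exterior Dirichlet problem with SRC outside a ball is well-posed for every $k$ under consideration, which is exactly what the paper's operators $\tilde\Lambda$ and $\mathrm{E}_{\partial B}$ presuppose.
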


\begin{proof}
Let $\mathrm{E}_{\partial B}:\Bcal(\partial B)\to H^1_{\mathrm{loc_b}}(\overline{B}^c)$ denote the exterior Dirichlet extension from $\partial B$ to $\overline{B}^c$ for the Helmholtz problem with $\mathrm{SRC}$, that is, for $f\in\Bcal(\partial B)$,
\begin{equation*}
\begin{cases}
(\Delta+k^2)(\mathrm{E}_{\partial B}f)=0 &\mbox{on }\overline{B}^c,\\
\Tr^e_{\partial B}(\mathrm{E}_{\partial B}f)=f,\\
\mathrm{SRC}.
\end{cases}
\end{equation*}
If $u\in H^1(B)$ is a weak solution to~\eqref{Eq:SLP-Helm-FV}, then it satisfies $\ddn u|_{\partial B}=\tilde\Lambda_{\partial B}(\Tr_{\partial B}u)$.
Denote $\tilde u\coloneq u\oplus\mathrm{E}_{\partial B}(\Tr_{\partial B}u)\in H^1_{\mathrm{loc_b}}(\R^n)$.
Then, $(\Delta+k^2)\tilde u=0$ on $\R^n\backslash(\partial B\cup\bord)$, $\llbracket\Tr_{\partial B}\tilde u\rrbracket=0$ and $\llbracket\ddn{\tilde u}|_{\partial B}\rrbracket=0$, so that $(\Delta+k^2)\tilde u=0$ on $\nbord$.
Therefore, $\tilde u$ is a weak solution to~\eqref{Eq:SLP-Helm-Prob}.
\end{proof}

The well-posedness of~\eqref{Eq:SLP-Helm-FV} -- hence, that of~\eqref{Eq:SLP-Helm-Prob} -- follows from Fredholm's alternative.

\begin{lemma}\label{Lem:Op-Fredholm}
Let $\Omega$ be a bounded two-sided $H^1$-admissible domain of $\R^n$ and let $k>0$.
Let $B$ be a large ball containing $\Omega$.
Then, the operator $A:H^1(B)\to H^1(B)$ associated to the left-hand side of~\eqref{Eq:SLP-Helm-FV}, namely, for $u,v\in H^1(B)$,
% \begin{equation*}
% (\nabla^*\circ\nabla-k^2i_{B}^*\circ i_{B}-\Tr_{\partial B}^*\circ\tilde\Lambda_{\partial B}\circ\Tr_{\partial B}):H^1(B)\longrightarrow H^1(B),
% \end{equation*}
\begin{equation*}
\langle Au,v\rangle_{H^1(B)}\coloneq\int_{B}\nabla u\cdot\nabla\bar v\,\dx-k^2\int_{B}u\bar v\,\dx-\int_{\partial B}\tilde\Lambda_{\partial B}(\Tr_{\partial B}u)\Tr_{\partial B}\bar v\,\dsig,
\end{equation*}
is Fredholm (of index $0$, see~\cite[§~1.3]{colton_integral_2013}).
% , where $\langle\nabla u,v\rangle_{L^2(B)^n}=\langle u,\nabla^*v\rangle_{H^1(B)}$ for $u\in H^1(B)$, $v\in L^2(B)^n$, and $i_{B}:H^1(B)\to L^2(B)$ denotes the canonical injection.
\end{lemma}

\begin{proof}
By~\eqref{Eq:Ineq-Re-PS}, it holds, for $u\in H^1(B)$,
\begin{equation}\label{Eq:SLP_FV_Coercive}
\int_{B}|\nabla u|^2\,\dx+\int_{B}|u|^2\,\dx-\mathrm{Re}\left(\int_{\partial B}\tilde\Lambda_{\partial B}(\Tr_{\partial B}u)\Tr_{\partial B}\bar u\,\dsig\right)
\ge \|u\|_{H^1(B)}^2.
\end{equation}
Since $B$ is bounded, $H^1(B)\hookrightarrow L^2(B)$ is compact, so that $A_1:H^1(B)\to H^1(B)$ defined by
\begin{equation*}
\langle A_1u,v\rangle_{H^1(B)}\coloneq-(k+1)\int_Bu\bar v\,\dx
\end{equation*}
can be seen as a compact perturbation of the coercive contribution~\eqref{Eq:SLP_FV_Coercive}.
\end{proof}

It follows that the transmission problem~\eqref{Eq:SLP-Helm-FV} is well-posed since the homogeneous problem is uniquely solved.

\begin{proposition}
Let $\Omega$ be a two-sided admissible domain of $\R^n$ and let $k>0$.
Then, the transmission problem~\eqref{Eq:SLP-Helm-Prob} is well-posed on $H^1_{\mathrm{loc_b}}(\R^n)$.
\end{proposition}

\begin{proof}
Let $u\in H^1(B)$ be a solution to~\eqref{Eq:SLP-Helm-FV} with $g=0$.
Then, by Lemma~\ref{Lem:Truncation}, $\tilde u\coloneq u\oplus\mathrm{E}_{\partial B}(\Tr_{\partial B}u)\in H^1_{\mathrm{loc_b}}(\R^n)$ satisfies $(\Delta+k^2)\tilde u=0$ on $\R^n$ and the $\mathrm{SRC}$: $\tilde u=0$ (hence, $u=0$). By Lemma~\ref{Lem:Op-Fredholm}, it follows that~\eqref{Eq:SLP-Helm-FV} -- hence,~\eqref{Eq:SLP-Helm-Prob} -- is well-posed.
\end{proof}

The well-posedness of~\eqref{Eq:SLP-Helm-Prob} allows to define the associated single layer potential operator as the operator reconstructing the solution given the jump in normal derivative.

\begin{definition}\label{Def:SLP-Helm}
Let $\Omega$ be a two-sided admissible domain of $\R^n$ and let $k>0$.
The single layer potential operator associated to the Helmholtz transmission problem~\eqref{Eq:SLP-Helm-Prob} is defined as
\begin{align*}
\Scal_k:\Bcal'(\bord)&\longrightarrow H^1_{\mathrm{loc_b}}(\R^n),\\
g&\longmapsto u,
\end{align*}
where $u$ is the unique weak solution to~\eqref{Eq:SLP-Helm-Prob} associated to $g$.
\end{definition}

By construction, the single layer potential operator is both linear and continuous.

\begin{proposition}
Let $\Omega$ be a two-sided admissible domain of $\R^n$ and let $k>0$.
The single layer potential $\Scal_k:\Bcal'(\bord)\to H^1_{\mathrm{loc_b}}(\R^n)$ is linear and continuous.
\end{proposition}

\begin{proof}
The linearity of the operator is inherited from that of the problem.
The continuity follows from the continuity of $\mathrm E_{\partial B}:\Bcal(\partial B)\to H^1_{\mathrm{loc_b}}(\overline{B}^c)$ (see for instance~\cite[§~2.6.5]{nedelec_acoustic_2011}) and Fredholm's alternative.
\end{proof}

\subsubsection{Double layer potential}\label{Subsubsec:DLP}

We now turn to~\eqref{Eq:Helm-Prob} with a homogeneous jump in normal derivative, that is, for $f\in\Hp$,
\begin{equation}\label{Eq:DLP-Helm-Prob}
\begin{cases}
(\Delta+k^2)u=0 &\mbox{on }\nbord,\\
\llbracket\Tr_{\bord} u\rrbracket=-f,\\
\llbracket\ddn u|_{\bord}\rrbracket=0,\\
\mathrm{SRC}.
\end{cases}
\end{equation}
To derive the variational formulation associated to~\eqref{Eq:DLP-Helm-Prob}, we lift the Dirichlet boundary condition.
To that end, consider the $1$-harmonic double layer potential operator applied to $f$, $\Dcal_{\mathrm i}f\in H^1(\nbord)$~\cite[§~4.3]{claret_layer_2026}, characterised by
\begin{equation}\label{Eq:DLP-1-harmo-Prob}
\begin{cases}
(-\Delta+1)\Dcal_{\mathrm i}f=0 &\mbox{on }\nbord,\\
\llbracket\Tr_{\bord}\Dcal_{\mathrm i}f\rrbracket=-f,\\
\llbracket\ddn{}\Dcal_{\mathrm i}f|_{\bord}\rrbracket=0.
\end{cases}
\end{equation}
Note that $\Dcal_{\mathrm i}:\Hp\to H^1(\nbord)$ is continuous.
Let $\chi\in \mathscr D(\R^n)$ be such that $\chi|_{B}=1$.
Setting $w=u-\chi(\Dcal_{\mathrm i}f)$, the transmission problem can be lifted as
\begin{equation}\label{Eq:DLP-Helm-Lift}
\begin{cases}
(\Delta+k^2)w=-(\Delta+k^2)(\chi(\Dcal_{\mathrm i}f)) &\mbox{on }\nbord,\\
\llbracket\Tr_{\bord} w\rrbracket=0,\\
\llbracket\ddn w|_{\bord}\rrbracket=0,\\
\mathrm{SRC}.
\end{cases}
\end{equation}

Using the same truncation method as in the previous subsection, a solution $w$ to~\eqref{Eq:DLP-Helm-Lift} satisfies, for all $v\in H^1(B)$,
\begin{equation}\label{Eq:DLP-Helm-FV}
\int_{B}\nabla w\cdot\nabla\bar v\,\dx-k^2\int_{B}w\bar v\,\dx-\int_{\partial B}\tilde\Lambda_{\partial B}(\Tr_{\partial B}w)\Tr_{\partial B}\bar v\,\dsig=(k^2+1)\int_{B}(\Dcal_{\mathrm i}f)\bar v\,\dx,
\end{equation}
and the same considerations allow to prove the problem is well-posed.
Note that, extending the solution on the truncated domain to the whole space in the spirit of Lemma~\ref{Lem:Truncation}, we will use an extension similar to $\mathrm{E}_{\partial B}$ but accounting for the (smooth) right-hand side in~\eqref{Eq:DLP-Helm-Lift}.
Such an extension is still continuous by the well-posedness of the Helmholtz problem with Dirichlet condition and $\mathrm{SRC}$ outside a sphere~\cite{colton_integral_2013}.

\begin{proposition}
Let $\Omega$ be a two-sided admissible domain of $\R^n$ and let $k>0$.
Then, the transmission problem~\eqref{Eq:DLP-Helm-Prob} is well-posed on $H^1_{\mathrm{loc_b}}(\nbord)$.
\end{proposition}

\begin{remark}
Alternatively, the well-posedness of the transmission problem~\eqref{Eq:DLP-Helm-Prob} could have been obtained without the truncation method: $(\Delta+k^2)(\chi(\Dcal_{\mathrm i}f))\in L^2(\R^n)$ along with~\eqref{Eq:DLP-Helm-Lift} allow to restate the lifted problem as
\begin{equation*}
\begin{cases}
(\Delta+k^2)w=-(\Delta+k^2)(\chi(\Dcal_{\mathrm i}f)) &\mbox{on }\R^n,\\
\mathrm{\mathrm{SRC}},
\end{cases}
\end{equation*}
which is well-posed (see Subsubsection~\ref{Subsec:LP-Repr}).
\end{remark}

As in the previous subsection, the well-posedness of~\eqref{Eq:DLP-Helm-Prob} allows to define the associated double layer potential operator as the operator reconstructing the solution from the jump in trace.

\begin{definition}\label{Def:DLP-Helm}
Let $\Omega$ be a two-sided admissible domain of $\R^n$ and let $k>0$.
The double layer potential operator associated to the Helmholtz transmission problem~\eqref{Eq:DLP-Helm-Prob} is defined as
\begin{align*}
\Dcal_k:\Bcal(\bord)&\longrightarrow H^1_{\mathrm{loc_b}}(\nbord),\\
f&\longmapsto u,
\end{align*}
where $u$ is the unique weak solution to~\eqref{Eq:DLP-Helm-Prob} associated to $f$.
\end{definition}

Once again, it follows from its construction that the double layer potential operator is linear and continuous.

\begin{proposition}
Let $\Omega$ be a two-sided admissible domain of $\R^n$ and let $k>0$.
The double layer potential $\Dcal_k:\Bcal'(\bord)\to H^1_{\mathrm{loc_b}}(\nbord)$ is linear and continuous.
\end{proposition}

\begin{proof}
The lifted problem is well-posed and $\Dcal_{\mathrm i}:\Hp\to H^1(\nbord)$ is linear and continuous~\cite[Lemma 4.8]{claret_layer_2026}.
\end{proof}

% The notation $\Dcal_{\mathrm i}$ denoting the $1$-harmonic double layer potential in the sense of~\cite[§~4.3]{claret_layer_2026} is consistent with the notation $\Dcal_k$ for $k=\mathrm i$, see Remark~\ref{Rem:Case_k=i} below.\\

By construction, the layer potentials allow to recover the solution to~\eqref{Eq:Helm-Prob} given the jumps $f\in\Hp$ and $g\in\Hm$, thus generalising Green's third identity in this measure-free framework.

\begin{theorem}[Green's third identity]\label{Th:Green3}
Let $\Omega$ be a two-sided admissible domain of $\R^n$ and let $k>0$.
Given $f\in\Hp$ and $g\in\Hm$, the Helmholtz transmission problem~\eqref{Eq:Helm-Prob} has a unique solution $u\in H^1_{\mathrm{loc_b}}(\nbord)$ given by
\begin{equation*}
u=\Scal_kg-\Dcal_kf.
\end{equation*}
\end{theorem}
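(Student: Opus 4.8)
The plan is to verify that $u \coloneqq \Scal_k g - \Dcal_k f$ solves the full transmission problem~\eqref{Eq:Helm-Prob}, and then to establish uniqueness. The construction is essentially a superposition argument: we have already decomposed the general problem into one with a homogeneous jump in trace (solved by $\Scal_k$) and one with a homogeneous jump in normal derivative (solved by $\Dcal_k$), so the work is to check that adding the two pieces produces exactly the prescribed jumps.

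First I would verify that $u$ belongs to $H^1_{\mathrm{loc_b}}(\nbord)$ and satisfies the Helmholtz equation $(\Delta+k^2)u = 0$ on $\nbord$; this is immediate since both $\Scal_k g$ and $\Dcal_k f$ are solutions to~\eqref{Eq:SLP-Helm-Prob} and~\eqref{Eq:DLP-Helm-Prob} respectively, and the equation is linear. The Sommerfeld Radiation Condition~\eqref{Eq:SRC} holds for $u$ for the same reason: both summands satisfy the SRC, and a linear combination of SRC-satisfying solutions again satisfies it (the radiation condition is linear in $u$). Next I would compute the jumps. By definition, $\Scal_k g$ has $\llbracket\Tr_{\bord}(\Scal_k g)\rrbracket = 0$ and $\llbracket\ddn{(\Scal_k g)}\rrbracket = g$, while $\Dcal_k f$ has $\llbracket\Tr_{\bord}(\Dcal_k f)\rrbracket = -f$ and $\llbracket\ddn{(\Dcal_k f)}\rrbracket = 0$. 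By linearity of the trace and normal-derivative operators (Theorem~\ref{Th:Trace} and Theorem~\ref{Th:Green}), it follows that
\begin{align*}
\llbracket\Tr_{\bord} u\rrbracket &= \llbracket\Tr_{\bord}(\Scal_k g)\rrbracket - \llbracket\Tr_{\bord}(\Dcal_k f)\rrbracket = 0 - (-f) = f,\\
\left\llbracket\ddn u\Big|_{\bord}\right\rrbracket &= \llbracket\ddn{(\Scal_k g)}\rrbracket - \llbracket\ddn{(\Dcal_k f)}\rrbracket = g - 0 = g,
\end{align*}
which are exactly the prescribed jump data. This confirms existence.

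For uniqueness, I would argue that the difference $w$ of two solutions solves the homogeneous problem~\eqref{Eq:Helm-Prob} with $f = 0$ and $g = 0$. Then $\llbracket\Tr_{\bord} w\rrbracket = 0$ and $\llbracket\ddn w\rrbracket = 0$ mean that $w$ extends across $\bord$ to an element of $H^1_{\mathrm{loc_b}}(\R^n)$ satisfying $(\Delta+k^2)w = 0$ on all of $\R^n$ together with the SRC; here I would invoke the vanishing of both jumps together with condition~(v) of Definition~\ref{Def:Admissible-Dom} (so that $\bord$ is Lebesgue-null and the equation genuinely holds across the whole space, as in the glueing step of the proof of Lemma~\ref{Lem:Truncation}). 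Such an entire radiating solution must vanish, which is the standard uniqueness result for the Helmholtz equation on $\R^n$ under the SRC (this is precisely the mechanism already used in the well-posedness proofs of the individual single- and double-layer problems).

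The main obstacle I anticipate is not the superposition bookkeeping, which is routine, but making the uniqueness step fully rigorous in this low-regularity setting: one must confirm that vanishing of \emph{both} the trace jump and the normal-derivative jump is genuinely sufficient to glue $w$ into a global $H^1_{\mathrm{loc_b}}(\R^n)$ weak solution of the Helmholtz equation across a merely compact (possibly fractal) boundary of Lebesgue measure zero. This is exactly the content that the jump relations and Green's formula (Theorem~\ref{Th:Green}) are designed to deliver, and the argument mirrors the glueing already performed in Lemma~\ref{Lem:Truncation}; once the global equation is established, the classical SRC uniqueness theorem closes the argument.
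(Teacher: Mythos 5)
Your proposal is correct and matches the paper's own (implicit) argument: the paper offers no separate proof, the theorem holding ``by construction'' --- existence is exactly your superposition of the jump relations from Definitions~\ref{Def:SLP-Helm} and~\ref{Def:DLP-Helm}, and uniqueness follows because the difference of two solutions solves~\eqref{Eq:DLP-Helm-Prob} with $f=0$, a problem already shown to be well-posed on $H^1_{\mathrm{loc_b}}(\nbord)$ (so no gluing is even required). If you do re-prove uniqueness by hand as you sketch, the gluing of $w$ across $\bord$ is delivered by the distributional identity~\eqref{Eq:Helm-Repr} applied to $\chi w$ for a cutoff $\chi\in\mathscr D(\R^n)$ equal to $1$ near $\bord$ (vanishing of both jumps kills the two boundary terms, so $(\Delta+k^2)w=0$ in $\mathscr D'(\R^n)$), rather than by Lemma~\ref{Lem:Truncation}, which glues across the smooth sphere $\partial B$, not across the possibly fractal $\bord$.
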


\begin{remark}\label{Rem:Helm_Ext_WP}
If $\Omega$ is a domain with compact boundary such that $\overline{\Omega}^c$ is $H^1$-admissible, similar arguments to those given in this section yield the well-posedness on $H^1_{\locb}(\overline{\Omega}^c)$ of the exterior Dirichlet and Neumann problems with the $\mathrm{\mathrm{SRC}}$~\eqref{Eq:SRC}, that is
\begin{equation*}
\begin{cases}
\Delta u+k^2u=0 &\mbox{on }\overline{\Omega}^c,\\
\Tr^e_{\bord}u=f,\\
\mathrm{\mathrm{SRC}},
\end{cases}
\qquad\mbox{and}\qquad
\begin{cases}
\Delta u+k^2u=0 &\mbox{on }\overline{\Omega}^c,\\
\ddny ue|_{\bord}=g,\\
\mathrm{\mathrm{SRC}},
\end{cases}
\end{equation*}
respectively, for $f\in\Hp$ and $g\in\Hm$.
\end{remark}

\subsubsection{Representation of the layer potentials}\label{Subsec:LP-Repr}

As in the Lipschitz case (for instance,~\cite{colton_integral_2013,mclean_strongly_2000}), we aim at representing the layer potential operators by means of the fundamental solution to the Helmholtz equation (which will be referred to as the Newtonian potential) $\mathcal G_k$, characterised by
\begin{equation*}
-(\Delta+k^2)\mathcal G_ku=-\mathcal G_k(\Delta+k^2)u=u,
\end{equation*}
for all compactly supported $u\in\mathscr D'(\R^n)$.
That potential is known to be associated to the so-called Newtonian kernel $G_k$, in the sense that for all compactly supported $u\in\mathscr D'(\R^n)$, it holds
\begin{equation*}
\mathcal G_ku=G_k*u.
\end{equation*}
The kernel $G_k\in C^\infty(\R^n\backslash\{0\})$ satisfies
\begin{equation*}
-(\Delta+k^2)G_k=\delta,
\end{equation*}
where $\delta\in\mathscr D'(\R^n)$ denotes the Dirac distribution.
For $k>0$, the Newtonian kernel $G_k$ can be expressed as (see for instance~\cite[Equation (9.14)]{mclean_strongly_2000})
\begin{equation}\label{Eq:Hankel}
G_k(z)=\frac {\mathrm i}4\left(\frac k{2\pi|z|}\right)^{\frac {n-2}2}H^{(1)}_{\frac{n-2}2}(k|z|),\qquad z\in \R^n\backslash\{0\},
\end{equation}
where $H^{(1)}_m$ denotes the Hankel function of the first kind and order $m\in\C$.

To represent the layer potential operators $\Scal_k$ and $\Dcal_k$ in terms of $\mathcal G_k$, we follow the method outlined in~\cite[Chapter 6]{mclean_strongly_2000}.
Before proceeding, let us recall well-known properties of the Newtonian potential regarding its regularity and behavior at infinity.
\begin{proposition}[{\cite[Theorem 3.1.2]{sauter_boundary_2011}}]\label{Prop:Newt-Pot-Reg}
For $s\in\R$, the mapping $\mathcal G_k:H^s_c(\R^n)\to H^{s+2}_{\mathrm{loc_b}}(\R^n)$ is continuous, where $H^s_c(\R^n)$ consists of all the compactly supported elements of $H^s(\R^n)$.
\end{proposition}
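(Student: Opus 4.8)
The plan is to view $\mathcal G_k$ as convolution with the fundamental solution $G_k$, whose Fourier transform is the multiplier $\hat G_k(\xi) = (|\xi|^2 - k^2)^{-1}$, interpreted with the outgoing ($+\mathrm i0$) prescription dictated by the SRC, and to extract the gain of two derivatives from the ellipticity of $\Delta + k^2$ at high frequency. Since the target is $H^{s+2}_{\mathrm{loc_b}}(\R^n)$, it suffices to fix a compact $K$ and a radius $r>0$ and prove an estimate $\|\mathcal G_k u\|_{H^{s+2}(B_r(0))} \le C\|u\|_{H^s(\R^n)}$ for all $u\in H^s(\R^n)$ with $\operatorname{supp}u\subset K$, the constant $C$ depending only on $K$, $r$, $k$ and $s$. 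I would then split the symbol with a cutoff $\phi\in\mathscr D(\R^n)$ equal to $1$ on $\{|\xi|\le 2|k|+1\}$, writing $\hat G_k = (1-\phi)\hat G_k + \phi\hat G_k$ and correspondingly $\mathcal G_k = T_{\mathrm{high}} + T_{\mathrm{low}}$.

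For $T_{\mathrm{high}}$ the symbol $(1-\phi)\hat G_k$ is smooth, nonvanishing, and satisfies the order $-2$ estimates $|\partial^\alpha[(1-\phi)\hat G_k](\xi)| \lesssim (1+|\xi|)^{-2-|\alpha|}$, obtained by differentiating $(|\xi|^2-k^2)^{-1}$ away from the sphere. Hence $T_{\mathrm{high}}$ is a classical Fourier multiplier of order $-2$ and maps $H^s(\R^n)$ continuously into $H^{s+2}(\R^n)$, in particular into $H^{s+2}(B_r(0))$; this is the step that produces the two-derivative gain, and it is routine. The remaining piece $T_{\mathrm{low}}$ has symbol supported in a fixed ball, so $T_{\mathrm{low}}u$ is a smooth function and all its local Sobolev norms on $B_r(0)$ are finite; the entire difficulty is to bound them by $\|u\|_{H^s}$.

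I expect $T_{\mathrm{low}}$ to be the main obstacle, precisely because $\phi\hat G_k$ is not a bounded multiplier: on the characteristic sphere $\{|\xi| = |k|\}$ (when $k$ is real) the factor $(|\xi|^2 - k^2)^{-1}$ is a genuine distribution fixed by the outgoing prescription, and this is exactly where the SRC and the non-$L^2$ behaviour of the potential at infinity are encoded. A bare $L^\infty$-symbol bound therefore cannot work, and this is where the compact support of $u$ (the subscript $c$) is essential: since $\hat u$ is then smooth with control by $\|u\|_{H^s}$ on the bounded frequency region, the near-resonant contribution can be estimated through a trace/limiting-absorption argument on the sphere, yielding the desired local bound on $\|T_{\mathrm{low}}u\|_{H^{s+2}(B_r(0))}$. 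Adding the two pieces gives the claim. An alternative I would keep in reserve is the kernel-splitting route of~\cite[Chapter 6]{mclean_strongly_2000}: near the diagonal $G_k$ has the same singularity as the Laplace kernel $G_0$, whose mapping property is the classical Riesz-potential estimate, while $G_k - G_0$ is locally smoother; combined with a separate treatment of the far field this again isolates the radiation-condition contribution as the one delicate point.
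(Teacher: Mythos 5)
First, a point of reference: the paper does not actually prove this proposition --- it is imported wholesale from the cited source (Sauter--Schwab, Theorem 3.1.2) --- so your proposal is not competing with an internal argument but with the standard literature proof. On its own terms, your main route has a genuine gap, and it sits exactly where you locate ``the entire difficulty''. The treatment of $T_{\mathrm{high}}$ is fine (indeed more than is needed: on $L^2$-based Sobolev spaces a bound $H^s\to H^{s+2}$ for a multiplier follows from the pointwise decay $|m(\xi)|\lesssim(1+|\xi|)^{-2}$ alone, with no derivative estimates). But for $T_{\mathrm{low}}$ with $k$ real, saying the near-characteristic contribution ``can be estimated through a trace/limiting-absorption argument on the sphere'' is an appeal to essentially the statement being proved: the limiting absorption principle for compactly supported data \emph{is} the local $H^{s+2}$ bound on the outgoing resolvent in that regime. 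To turn the sketch into a proof you would have to carry it out --- write $(|\xi|^2-k^2-\mathrm i0)^{-1}$ by Sokhotski--Plemelj as a principal value plus a surface measure on $\{|\xi|=|k|\}$, control $\hat u$ and $\nabla_\xi\hat u$ pointwise on bounded frequency sets by $\|u\|_{H^s}$ via Paley--Wiener (this is where compact support enters), and estimate the principal value in polar coordinates. None of that is in your text, and it is the analytic core of the result.

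The route you ``keep in reserve'' is the one to promote to the actual proof, because it dissolves the difficulty rather than confronting it. Split the \emph{kernel}, not the symbol: take $\chi\in\mathscr D(\R^n)$ with $\chi=1$ near $0$ and write $G_k=\chi G_k+(1-\chi)G_k$. Since $-(\Delta+k^2)(\chi G_k)=\delta-\psi$ with $\psi=2\nabla\chi\cdot\nabla G_k+(\Delta\chi)G_k\in\mathscr D(\R^n)$ (the commutator terms are supported where $G_k$ is smooth), the Fourier transform of the compactly supported distribution $\chi G_k$ is \emph{entire} and equals $(c_n-\hat\psi(\xi))/(|\xi|^2-k^2)$ off the characteristic set; analyticity forces the numerator to vanish there, so $|\widehat{\chi G_k}(\xi)|\le C(1+|\xi|)^{-2}$ on all of $\R^n$, and convolution with $\chi G_k$ is bounded $H^s(\R^n)\to H^{s+2}(\R^n)$ globally. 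In other words, the resonance at $|\xi|=|k|$ that blocks your $T_{\mathrm{low}}$ estimate is an artifact of the smooth tail of $G_k$; and that tail, $(1-\chi)G_k$, convolved with a distribution supported in a fixed compact set $K$, yields a $C^\infty$ function whose derivatives on any $B_r(0)$ are bounded by $\|u\|_{H^s}$ through the $H^s$--$H^{-s}$ pairing against the smooth kernel. Summing the two pieces proves the proposition for every $k\in\C$ at once, and --- as one should expect for a purely local mapping property --- the SRC and the behaviour of $G_k$ at infinity never enter the argument.
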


\begin{proposition}[{\cite[Proposition 7.14 and Theorem 9.6]{mclean_strongly_2000}}]\label{Prop:Newt-Pot-SRC}
If $u\in \mathscr D'(\R^n)$ is compactly supported, then $\mathcal G_ku=G_k*u$ satisfies the $\mathrm{SRC}$~\eqref{Eq:SRC}.
\end{proposition}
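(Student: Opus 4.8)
The plan is to reduce the statement to the asymptotic behaviour of the fundamental solution $G_k$ itself and then transfer that behaviour through the convolution. Since $u$ has compact support, for $|x|$ large the point $x$ lies outside $\operatorname{supp}(u)$ and the function $y\mapsto G_k(x-y)$ is smooth on a neighbourhood of $\operatorname{supp}(u)$; hence $\mathcal G_ku(x)=\langle u,G_k(x-\cdot)\rangle$ is well defined and $C^\infty$ in $x$ there. Because differentiation passes through the distributional pairing,
\begin{equation*}
\left(\frac\partial{\partial|x|}-\mathrm ik\right)\mathcal G_ku(x)=\left\langle u,\left(\frac\partial{\partial|x|}-\mathrm ik\right)G_k(x-\cdot)\right\rangle,
\end{equation*}
so it suffices to control the kernel $(\partial_{|x|}-\mathrm ik)G_k(x-y)$ as $|x|\to\infty$, uniformly for $y$ in $\operatorname{supp}(u)$ and for the direction $\hat x=x/|x|$ on the unit sphere.

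First I would check that $G_k$ is itself radiating. Writing $\nu=\frac{n-2}2$, $r=|z|$ and using the large-argument asymptotics
\begin{equation*}
H^{(1)}_\nu(t)=\sqrt{\tfrac2{\pi t}}\,e^{\mathrm i(t-\nu\pi/2-\pi/4)}\big(1+O(1/t)\big),\qquad (H^{(1)}_\nu)'(t)=\mathrm iH^{(1)}_\nu(t)\big(1+O(1/t)\big),
\end{equation*}
one gets from~\eqref{Eq:Hankel} that $G_k(z)=O(|z|^{(1-n)/2})$ and, differentiating in $r$,
\begin{equation*}
\partial_rG_k(z)=\Big(\mathrm ik-\tfrac\nu r\Big)G_k(z)+O(|z|^{-(n+1)/2})=\mathrm ikG_k(z)+o(|z|^{(1-n)/2}).
\end{equation*}
Thus $(\partial_r-\mathrm ik)G_k(z)=o(|z|^{(1-n)/2})$, and every estimate here depends only on $r=|z|$, hence is uniform in direction.

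Next I would transfer the radial derivative from $|x-y|$ to $|x|$. As $G_k$ is radial, $\partial_{|x|}G_k(x-y)=(\hat x\cdot\widehat{x-y})\,\partial_rG_k(x-y)$ with $\widehat{x-y}=(x-y)/|x-y|$. For $y$ bounded and $|x|$ large, $|x-y|=|x|-\hat x\cdot y+O(1/|x|)$ and the leading correction to $\widehat{x-y}$ is orthogonal to $\hat x$, so $\hat x\cdot\widehat{x-y}-1=O(1/|x|^2)$. Substituting the expansion of $\partial_rG_k$ gives
\begin{equation*}
\left(\frac\partial{\partial|x|}-\mathrm ik\right)G_k(x-y)=\mathrm ik\big(\hat x\cdot\widehat{x-y}-1\big)G_k(x-y)+(\hat x\cdot\widehat{x-y})\,o\big(|x-y|^{(1-n)/2}\big),
\end{equation*}
where the first term is $O(|x|^{-2}|x|^{(1-n)/2})=o(|x|^{(1-n)/2})$ and the second is $o(|x|^{(1-n)/2})$ since $|x-y|\sim|x|$ uniformly in $y$. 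This yields the pointwise bound $(\partial_{|x|}-\mathrm ik)G_k(x-y)=o(|x|^{(1-n)/2})$, uniformly in $\hat x$ and in $y$ over the compact support.

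The main obstacle is to pass from this kernel bound to a bound on the distributional pairing. A compactly supported distribution $u$ has finite order $m$, so $|\langle u,\phi\rangle|\lesssim\sum_{|\alpha|\le m}\sup_K|\partial^\alpha_y\phi|$ for $\phi$ smooth on a fixed compact neighbourhood $K$ of $\operatorname{supp}(u)$. Applying this to $\phi(y)=(\partial_{|x|}-\mathrm ik)G_k(x-y)$ requires that each $\partial^\alpha_y\phi$ with $|\alpha|\le m$ also be $o(|x|^{(1-n)/2})$, uniformly in $\hat x$. Since $\partial_{|x|}$ acts on $x$ and $\partial^\alpha_y$ on $y$, they commute and $\partial^\alpha_yG_k(x-y)=(-1)^{|\alpha|}(\partial^\alpha G_k)(x-y)$, so one must run the same argument for each fixed derivative $\partial^\alpha G_k$ in place of $G_k$. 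This is where the technical work lies: $\partial^\alpha G_k$ is a combination of radial derivatives of $G_k$ with bounded angular factors and negative powers of $|x-y|$, and differentiating the Hankel asymptotics in $r$ reproduces the leading behaviour up to a factor $\mathrm ik$ while strictly improving the remainder, so the radiation estimate is preserved at every order. Once this bookkeeping is carried out with constants uniform in $\hat x$, the resulting uniform $o(|x|^{(1-n)/2})$ bound on the pairing is exactly the SRC~\eqref{Eq:SRC} for $\mathcal G_ku$.
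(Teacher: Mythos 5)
The paper offers no proof of this proposition at all: it is quoted verbatim from McLean (Proposition 7.14 and Theorem 9.6), so there is no internal argument to compare against, and any correct self-contained proof is necessarily "a different route." What you have written is essentially the standard direct argument behind the cited results, and it is correct in structure and in its computations. The reduction $\mathcal G_ku(x)=\langle u,G_k(x-\cdot)\rangle$ for large $|x|$ is legitimate because $\operatorname{singsupp}(G_k*u)\subset\operatorname{supp}(u)$, so $\mathcal G_ku$ is smooth there and derivatives pass through the pairing; your Hankel-asymptotics computation correctly gives $(\partial_r-\mathrm ik)G_k(z)=O(|z|^{-(n+1)/2})$ (stronger than the required $o$); the direction-transfer step via $\hat x\cdot\widehat{x-y}-1=O(|x|^{-2})$ is right; and the finite-order seminorm estimate for compactly supported distributions is exactly the correct tool to convert the kernel bounds into a bound on the pairing, uniformly in $\hat x$.

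The one step you flag as "bookkeeping" — that every $\partial^\alpha G_k$ with $|\alpha|\le m$ obeys the same uniform radiation estimate — is genuinely needed, and it is true, but be careful with your justification: one cannot, in general, differentiate an asymptotic expansion, and the phrase "differentiating the Hankel asymptotics in $r$" is where a referee would push back. The clean fix is the exact recurrence $\frac{\mathrm d}{\mathrm dt}\bigl(t^{-\nu}H^{(1)}_\nu(t)\bigr)=-t^{-\nu}H^{(1)}_{\nu+1}(t)$, which shows by induction that $\partial^\alpha G_k(z)$ is a finite sum of terms of the form $p(\hat z)\,|z|^{-j}\,|z|^{-\nu}H^{(1)}_{\nu+m'}(k|z|)$ with $p$ polynomial (hence bounded on the sphere) and $j,m'\ge 0$; applying the known large-argument asymptotics of $H^{(1)}_{\nu+m'}$ and of its first derivative to each such exact term then yields $\partial^\alpha G_k(z)=O(|z|^{(1-n)/2})$ and $(\partial_r-\mathrm ik)\partial^\alpha G_k(z)=O(|z|^{-(n+1)/2})$, uniformly in direction, which is precisely what your final estimate consumes (together with $|\hat x-\widehat{x-y}|=O(|x|^{-1})$ to handle the fact that $\partial^\alpha G_k$ is no longer radial). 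With that modification your proof is complete. Compared with the paper, your route buys a self-contained, elementary verification from the explicit kernel; the paper's citation buys brevity and delegates exactly this bookkeeping to the literature.
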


So far, the layer potentials have been understood as operators allowing to solve the Helmholtz transmission problem~\eqref{Eq:Helm-Prob} and reconstruct the solution by means of Green's third identity (Theorem~\ref{Th:Green3}).
To derive formulae for the layer potentials, we prove a similar representation formula involving the Newtonian potential.
Assume $u\in L^2(\R^n)$ is such that $u\in H^1_{\locb}(\nbord)$ and $-(\Delta+k^2)u=h$ weakly on $\nbord$ for some $h\in H^{-1}(\R^n)$.
Then, for all $\varphi\in \mathscr D(\R^n)$, it holds
\begin{align*}
\langle(-\Delta-k^2)u,\varphi\rangle_{\mathscr D'\!,\,\mathscr D(\R^n)}&=\int_{\R^n}u(-\Delta-k^2)\bar\varphi\,\dx\\
&=\int_{\R^n}\nabla u\cdot\nabla\bar\varphi\,\dx-k^2\int_{\R^n}u\bar\varphi\,\dx-\overline{\left\langle\ddn\varphi,\llbracket\Tr u\rrbracket\right\rangle}\\
&=\langle h,\varphi\rangle_{H^{-1}\!,\,H^1(\R^n)}+\left\langle\left\llbracket\ddn u\right\rrbracket,\Tr\varphi\right\rangle-\overline{\left\langle\ddn\varphi,\llbracket\Tr u\rrbracket\right\rangle},
\end{align*}
where the unspecified dual pairings are on $(\Hm,\Hp)$.
This can be rewritten as an identity in $\mathscr D'(\R^n)$ as
\begin{equation}\label{Eq:Helm-Repr}
-(\Delta+k^2)u=h+\Tr^*\left\llbracket\ddn u\right\rrbracket-\left(\ddn{}\right)^*\llbracket\Tr u\rrbracket.
\end{equation}
Moreover, if $u$ (and $h$) is compactly supported on $\R^n$, applying the Newtonian potential to the above yields
\begin{equation}\label{Eq:Pot-Repr}
u=\mathcal G_kh+\mathcal G_k\Tr^*\left\llbracket\ddn u\right\rrbracket-\mathcal G_k\left(\ddn{}\right)^*\llbracket\Tr u\rrbracket\qquad\mbox{in }\mathscr D'(\R^n),
\end{equation}
bearing in mind that the adjoint operators $\Tr^*:\Hm\to H^{-1}(\R^n)$ and $(\ddn{})^*:\Hp\to (H^1_\Delta(\R^n))'$ both have ranges composed of distributions (compactly) supported on $\bord$, see Section~\ref{Sec:Framework}.
The second term on the right-hand side can be identified as a single layer potential.

\begin{proposition}\label{Prop:SLP-Repr}
Let $\Omega$ be a two-sided admissible domain of $\R^n$ and let $k>0$.
For all $g\in\Hm$, it holds
\begin{equation*}
\Scal_k g=\mathcal G_k\Tr^*g.
\end{equation*}
\end{proposition}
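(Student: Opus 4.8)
The plan is to verify that the candidate $w\coloneq\mathcal G_k\Tr^* g$ is \emph{a} (hence, by uniqueness, \emph{the}) solution of the single layer transmission problem~\eqref{Eq:SLP-Helm-Prob}, and then to invoke the well-posedness of that problem to identify $w=\Scal_k g$. One cannot simply insert $u=\Scal_k g$ into the representation formula~\eqref{Eq:Pot-Repr}, since the latter was derived under the assumption that $u$ is compactly supported, whereas $\Scal_k g$ decays only like the fundamental solution and satisfies the SRC. This is precisely the obstruction, and it forces the argument to run in the reverse direction, through verification plus uniqueness rather than through a direct substitution into~\eqref{Eq:Pot-Repr}.

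First I would record the three easy properties of $w$. Since $\Tr^*\colon\Hm\to H^{-1}(\R^n)$ has range (compactly) supported on $\bord$, Proposition~\ref{Prop:Newt-Pot-Reg} applied with $s=-1$ gives $w\in H^1_{\mathrm{loc_b}}(\R^n)$. By the defining property of the Newtonian potential, $-(\Delta+k^2)w=\Tr^* g$ in $\mathscr D'(\R^n)$; as $\Tr^* g$ is supported on $\bord$, this yields $(\Delta+k^2)w=0$ on $\nbord$. Finally, $\Tr^* g$ being compactly supported, Proposition~\ref{Prop:Newt-Pot-SRC} ensures that $w$ satisfies the SRC~\eqref{Eq:SRC}.

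The crux is the pair of jump conditions. The jump in trace is free: since $w\in H^1_{\mathrm{loc_b}}(\R^n)$, its interior and exterior traces coincide (as recalled for $H^1(\R^n)$ functions after Theorem~\ref{Th:Trace}), so $\llbracket\Tr w\rrbracket=0$. For the jump in normal derivative, I would apply the distributional identity~\eqref{Eq:Helm-Repr} to $w$; this is legitimate because, although $w$ is not globally in $H^1_\Delta$, it belongs to $H^1_{\Delta,\mathrm{loc_b}}(\nbord)$ and~\eqref{Eq:Helm-Repr} is only ever tested against compactly supported $\varphi\in\mathscr D(\R^n)$, on whose support $w$ is genuinely $H^1_\Delta$. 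With the bulk term $h=0$ (from $(\Delta+k^2)w=0$ on $\nbord$) and $\llbracket\Tr w\rrbracket=0$ just established,~\eqref{Eq:Helm-Repr} reduces to $-(\Delta+k^2)w=\Tr^*\llbracket\ddn w\rrbracket$. Comparing with $-(\Delta+k^2)w=\Tr^* g$ gives $\Tr^*\big(\llbracket\ddn w\rrbracket-g\big)=0$.

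It then remains to deduce $\llbracket\ddn w\rrbracket=g$ from this, which follows from the injectivity of $\Tr^*$. Concretely, pairing the last identity with an arbitrary $\varphi\in\mathscr D(\R^n)$ gives $\big\langle\llbracket\ddn w\rrbracket-g,\Tr\varphi\big\rangle_{\Bcal',\Bcal(\bord)}=0$; since $\Tr\colon H^1(\R^n)\to\Hp$ is surjective by the very definition of $\Hp$, the set $\Tr(\mathscr D(\R^n))$ is dense in $\Hp$, and one concludes $\llbracket\ddn w\rrbracket=g$. Thus $w$ solves~\eqref{Eq:SLP-Helm-Prob} with datum $g$, and by the uniqueness underlying the well-posedness of that problem (which is what makes Definition~\ref{Def:SLP-Helm} meaningful) we obtain $w=\Scal_k g$. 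The main difficulty, as flagged, is conceptual rather than computational: reconciling the non-compact support of the true solution with the compact-support hypothesis behind~\eqref{Eq:Pot-Repr}, which the reverse strategy circumvents by only ever testing~\eqref{Eq:Helm-Repr} against compactly supported functions.
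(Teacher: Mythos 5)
Your proposal is correct and takes essentially the same route as the paper: define the candidate $\mathcal G_k\Tr^*g$, get $H^1_{\mathrm{loc_b}}(\R^n)$ regularity (hence zero trace jump) from Proposition~\ref{Prop:Newt-Pot-Reg}, the equation off $\bord$ from the definition of $\mathcal G_k$, the SRC from Proposition~\ref{Prop:Newt-Pot-SRC}, the normal-derivative jump from the representation identity together with the density of $\Tr(\mathscr D(\R^n))$ in $\Hp$, and conclude by uniqueness of the transmission problem. The only difference is cosmetic: the paper extracts the jump by applying~\eqref{Eq:Pot-Repr} to a cutoff $\chi u$, whereas you localize~\eqref{Eq:Helm-Repr} directly (testing only against compactly supported $\varphi$), which reduces to the same computation and, if anything, handles the non-compact support issue slightly more cleanly.
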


\begin{proof}
For $g\in\Hm$, denote $u\coloneq\mathcal G_k\Tr^*g$.
By definition of $\mathcal G_k$, it holds
\begin{equation*}
-(\Delta+k^2)u=\Tr^*g\qquad\mbox{in }\mathscr D'(\R^n).
\end{equation*}
In particular, $(\Delta+k^2)u=0$ on $\nbord$.
Moreover, by Proposition~\ref{Prop:Newt-Pot-Reg}, it holds $u\in H^1_{\mathrm{loc_b}}(\R^n)$, hence $\llbracket\Tr_{\bord} u\rrbracket=0$.
If $\chi\in \mathscr D(\R^n)$ is such that $\chi=1$ on an open neighbourhood $U$ of $\overline{\Omega}$, then~\eqref{Eq:Pot-Repr} for $\chi u$ yields
\begin{equation*}
u=\mathcal G_k\Tr^*\left\llbracket\ddn u\right\rrbracket\qquad\mbox{in }\mathscr D'(U).
\end{equation*}
Since $\Tr(\mathscr D(U))=C^\infty(\bord)$ is dense in $\Hp$, it follows that $\llbracket\ddn u\rrbracket=g$.
By Proposition~\ref{Prop:Newt-Pot-SRC}, the $\mathrm{SRC}$ is satisfied by $u$, and the result follows.
\end{proof}

Now that the second term on the right-hand side of~\eqref{Eq:Pot-Repr} has been identified, we can deduce the last term corresponds to the double layer potential.

\begin{proposition}\label{Prop:DLP-Repr}
Let $\Omega$ be a two-sided admissible domain of $\R^n$ and let $k>0$.
For all $f\in\Hp$, it holds
\begin{equation*}
\Dcal_k f=\mathcal G_k\left(\ddn{}\right)^*f.
\end{equation*}
\end{proposition}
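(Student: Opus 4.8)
The plan is to mirror the structure of the proof of Proposition~\ref{Prop:SLP-Repr} for the single layer potential, now identifying the \emph{last} term on the right-hand side of~\eqref{Eq:Pot-Repr} as the double layer potential. Given $f\in\Hp$, I set $u\coloneq\mathcal G_k(\ddn{})^*f$ and aim to verify that $u$ solves the transmission problem~\eqref{Eq:DLP-Helm-Prob} characterising $\Dcal_kf$, namely that $(\Delta+k^2)u=0$ on $\nbord$, that $\llbracket\Tr_{\bord}u\rrbracket=-f$, that $\llbracket\ddn u|_{\bord}\rrbracket=0$, and that the SRC holds. Since $(\ddn{})^*$ has range compactly supported on $\bord$ and $\mathcal G_k$ inverts $-(\Delta+k^2)$, by definition of $\mathcal G_k$ we immediately get $-(\Delta+k^2)u=(\ddn{})^*f$ in $\mathscr D'(\R^n)$, so $(\Delta+k^2)u=0$ on $\nbord$; the SRC follows from Proposition~\ref{Prop:Newt-Pot-SRC} since $(\ddn{})^*f$ is compactly supported.

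The core of the argument is the identification of the two jumps. The key observation is that the adjoint $(\ddn{})^*:\Hp\to(H^1_\Delta(\R^n))'$ is, by construction, the transpose of the normal derivation operator appearing in Green's formula (Theorem~\ref{Th:Green}). Comparing the representation identity~\eqref{Eq:Helm-Repr} applied to $u$ with the identity $-(\Delta+k^2)u=(\ddn{})^*f$ just obtained, and using that the single-layer contribution $\Tr^*\llbracket\ddn u\rrbracket$ lives in $H^{-1}(\R^n)$ while $(\ddn{})^*$ maps into $(H^1_\Delta(\R^n))'$, I expect to read off by matching the two distributional terms (of distinct regularity, hence linearly independent once tested against suitable functions) that $\llbracket\ddn u\rrbracket=0$ and $\llbracket\Tr u\rrbracket=-f$. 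Concretely, I would test~\eqref{Eq:Helm-Repr} against $\varphi\in\mathscr D(\R^n)$ and compare with the defining weak relation for $(\ddn{})^*f$, which reads $\langle(\ddn{})^*f,\varphi\rangle=\overline{\langle\ddny{\bar\varphi}{}{},f\rangle}$ up to conjugation conventions; since $\Tr(\mathscr D(\R^n))$ is dense in $\Hp$ (as used already in Proposition~\ref{Prop:SLP-Repr}), matching the trace and normal-derivative pairings separately forces $\llbracket\Tr u\rrbracket=-f$ and $\llbracket\ddn u\rrbracket=0$.

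By Proposition~\ref{Prop:Newt-Pot-Reg}, $(\ddn{})^*f\in(H^1_\Delta(\R^n))'\subset H^{-1}(\R^n)$ maps under $\mathcal G_k$ into $H^1_{\mathrm{loc_b}}(\R^n)$, so $u$ has the required regularity and $\llbracket\Tr u\rrbracket$ is well-defined; the whole identification then shows $u$ satisfies~\eqref{Eq:DLP-Helm-Prob}, and by the uniqueness asserted in the well-posedness of that problem we conclude $u=\Dcal_kf$. The main obstacle I anticipate is the careful bookkeeping in the second step: disentangling the two jump terms in~\eqref{Eq:Helm-Repr} requires knowing that $\Tr^*$ and $(\ddn{})^*$ have ranges in genuinely different spaces so that the decomposition of $-(\Delta+k^2)u$ into a trace-adjoint part and a normal-derivative-adjoint part is unique. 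This is precisely the hypersingular nature of the double layer — the normal-derivative-adjoint term is \emph{less} regular than the trace-adjoint one — so the matching must be done at the correct level of duality, testing against $\mathscr D(\R^n)$ and invoking density of $\Tr(\mathscr D(\R^n))$ in $\Hp$ to separate the two contributions rigorously.
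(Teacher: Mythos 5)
There is a genuine gap — in fact two, and they compound each other. First, your regularity step rests on the inclusion $(H^1_\Delta(\R^n))'\subset H^{-1}(\R^n)$, which is backwards: since $H^1_\Delta(\R^n)\subset H^1(\R^n)$, dualising gives $H^{-1}(\R^n)\subset(H^1_\Delta(\R^n))'$, and $(\ddn{})^*f$ is in general \emph{rougher} than $H^{-1}$ (in the smooth case it is a dipole-type distribution, not a surface density). So Proposition~\ref{Prop:Newt-Pot-Reg} does not give $u\coloneq\mathcal G_k(\ddn{})^*f\in H^1_{\mathrm{loc_b}}(\R^n)$ — and it had better not: if $u$ were globally $H^1_{\mathrm{loc_b}}$ across $\bord$, then $\llbracket\Tr u\rrbracket=0$, contradicting the jump $-f$ you are trying to establish. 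The double layer potential is only piecewise $H^1$. This error propagates: identities~\eqref{Eq:Helm-Repr} and~\eqref{Eq:Pot-Repr} were derived under the hypothesis $u\in H^1_\Delta(\nbord)$ (piecewise $H^1$ with $L^2$ Laplacian), so you cannot legitimately apply them to $u=\mathcal G_k(\ddn{})^*f$ until you have proved precisely the kind of two-sided regularity that is in question. Second, even granting that regularity, your ``matching'' step — separating $\Tr^*\llbracket\ddn u\rrbracket$ from $(\ddn{})^*(\llbracket\Tr u\rrbracket+f)$ because the two adjoints land in ``genuinely different spaces'' — is asserted, not proved. What is needed is that $\operatorname{Ran}\Tr^*\cap\operatorname{Ran}(\ddn{})^*=\{0\}$, or equivalently that Cauchy data $(\Tr\varphi,\ddn\varphi)$ of test functions can be (approximately) prescribed independently. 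For a Lipschitz boundary this is classical, via the structure of distributions supported on a hypersurface; for an extension domain with fractal boundary no such structure theorem is available, and nothing in the paper's toolkit up to this point supplies it.

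The paper's proof is designed exactly to avoid both difficulties. Instead of applying the representation formula to the unknown object $\mathcal G_k(\ddn{})^*f$, it applies~\eqref{Eq:Pot-Repr} to an auxiliary function $u=w\mathds 1_\Omega$, where $w\in H^1(\Omega)$ solves the \emph{shifted} interior Dirichlet problem $(\Delta+k^2+\eta)w=0$, $\Tr w=f$, with $\eta$ chosen so that $\mathrm{Re}(k^2)+\eta<0$ guarantees well-posedness. This $u$ is manifestly in $H^1(\nbord)$, compactly supported, with $-(\Delta+k^2)u=\eta u\in L^2(\R^n)$, so the formula applies; in the resulting identity
\begin{equation*}
u=\eta\,\mathcal G_ku+\Scal_k\left\llbracket\ddn u\right\rrbracket-\mathcal G_k\left(\ddn{}\right)^*f,
\end{equation*}
every term except the last has known regularity and known jumps ($\mathcal G_ku\in H^2_{\mathrm{loc_b}}(\R^n)$ has none, $\Scal_k\llbracket\ddn u\rrbracket$ has the jumps given by Definition~\ref{Def:SLP-Helm}, and $u$ itself has jumps $(f,\llbracket\ddn u\rrbracket)$), so the jumps of $\mathcal G_k(\ddn{})^*f$ are read off by difference — no separation lemma and no a priori regularity for the double layer potential are ever needed. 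If you want to salvage your approach, you would have to prove both the piecewise-$H^1_\Delta$ regularity of $\mathcal G_k(\ddn{})^*f$ and the trivial-intersection property of the two adjoint ranges; each is a substantial statement in this generality, which is why the paper routes around them.
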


\begin{proof}
Let $\eta\in\R$ be such that $\mathrm{Re}(k^2)+\eta<0$. Then, for all $f\in\Hp$, the Dirichlet problem
\begin{equation*}
\begin{cases}
(\Delta+k^2+\eta)w=0 &\mbox{on }\Omega,\\
\Tr w=f,
\end{cases}
\end{equation*}
has a unique solution $w\in H^1(\Omega)$. In particular, for $f\in\mathscr D(\R^n)$, consider $u\coloneq w\mathds{1}_\Omega\in H^1(\nbord)$. Then, $(-\Delta-k^2)u=\eta u$ on $\nbord$, so that by~\eqref{Eq:Pot-Repr}, it holds
\begin{equation*}
u=\eta \mathcal G_ku+\Scal_k\left\llbracket\ddn u\right\rrbracket- \mathcal G_k\left(\ddn{}\right)^*f.
\end{equation*}
Since $u\in L^2(\R^n)$, then $\mathcal G_ku\in H^2_{\mathrm{loc_b}}(\R^n)$ (Proposition~\ref{Prop:Newt-Pot-Reg}) and therefore has no jump in trace nor in normal derivative across $\bord$.
Bearing in mind the jump relations satisfied by the single layer potential (see Definition~\ref{Def:SLP-Helm}), it holds
\begin{equation*}
\left\llbracket\Tr\left(\mathcal G_k\left(\ddn{}\right)^*f\right)\right\rrbracket=-f\quad\mbox{and}\quad\left\llbracket\ddn{}\left(\mathcal G_k\left(\ddn{}\right)^*f\right)\right\rrbracket=0.
\end{equation*}
Since $(\ddn{})^*f$ is compactly supported on $\bord$, it holds
\begin{equation*}
(\Delta+k^2)\left(\mathcal G_k\left(\ddn{}\right)^*f\right)=0\quad\mbox{on }\nbord.
\end{equation*}
It follows from Proposition~\ref{Prop:Newt-Pot-SRC} and Definition~\ref{Def:DLP-Helm} that $\Dcal_kf=\mathcal G_k(\ddn{})^*f$ for all $f\in\mathscr D(\R^n)$, and all $f\in\Hp$ by density.
\end{proof}

If the boundary is endowed with a `suitable' Borel measure $\mu$ allowing for irregular geometries (see~\cite[Remark 3.7]{chandler-wilde_integral_2026}, relying on~\cite{hinz_kernels_2025}), then it holds
\begin{equation*}
\Hp\hookrightarrow L^2(\bord,\mu)\hookrightarrow\Hm,
\end{equation*}
where the embeddings (i.e., linear, continuous injections) are dense and compact, and where $L^2(\bord,\mu)$ is identified with its dual space.
In that sense, an element $g\in\Hm$ is said to be an element of $L^2(\bord,\mu)$ when there exists $\tilde g\in L^2(\bord,\mu)$ such that
\begin{equation}\label{Eq:Incl-L2-B'}
\forall f\in\Hp,\quad\langle g,f\rangle_{\Bcal'\!,\,\Bcal(\bord)}=\langle\tilde g,f\rangle_{L^2(\bord,\mu)}.
\end{equation}
In that case, we identify $\tilde g$ and $g$, and simply denote $g\in L^2(\bord,\mu)$.
When $g\in L^2(\bord,\mu)$, it follows from Proposition~\ref{Prop:SLP-Repr} that its single layer potential can be expressed as a boundary integral: given $\chi\in\mathscr D(\R^n)$ such that $\chi=1$ on a neighbourhood of $\bord$, $\chi\Tr^*g=\Tr^*g$ so that, for a.e. $x\in\R^n$, it holds
\begin{align}\label{Eq:Link_Sk_Gk}
\Scal_kg(x)&=\langle \Tr^*g,\chi\,\overline{G_k(x-\cdot)}\rangle_{H^{-1}\!,\,H^1(\R^n)}\\
&=\langle g,\Tr\overline{G_k(x-\cdot)}\rangle_{\Bcal'\!,\,\Hp}=\int_{\bord}G_k(x-y)g(y)\,\mu(\dy),\nonumber
\end{align}
where the first equality follows from the definition of the embedding $H^{-1}(\R^n)\hookrightarrow \mathscr D'(\R^n)$ and the fact that $\Tr^*g$ is compactly supported on $\bord$, and the last one from the meaning of $L^2(\bord,\mu)\hookrightarrow\Hm$.
In particular, the first two equalities are valid even in the absence of $\mu$, for $g\in\Hm$.

Similarly, the double layer potential $\Dcal_k$ can be represented by means of the Newtonian kernel $G_k$ as
\begin{equation*}
\Dcal_kf(x)=\Big\langle\ddn{G_k(x-\cdot)},\bar f\Big\rangle_{\Bcal'\!,\,\Hp},\quad\mbox{a.e. }x\in\nbord.
\end{equation*}
To express that operator as an integral, unlike the single layer potential, the condition does not regard the boundary condition but the domain itself. 
Indeed, one must have $\ddn {G_k(x-\cdot)}\big|_{\bord}\in L^2(\bord,\mu)\hookrightarrow\Hm$ for $x\in\nbord$.
In particular, if the domain is Lipschitz and its boundary is endowed with the usual surface measure $\sigma$, then $\ddn {G_k(x-\cdot)}\big|_{\bord}\in L^2(\bord,\sigma)$ and it holds, for $f\in\Hp$ and a.e. $x\in\nbord$,
\begin{equation*}
\Dcal_kf(x)=\int_{\bord}\frac{\partial G_k}{\partial\nu_y}(x-y)f(y)\,\sigma(\dy).
\end{equation*}
Thus, the layer potential operators defined variationally for bounded two-sided admissible domains in this section appear as a direct generalisation of the classical layer potential operators, defined by means of the fundamental solution.

\subsection{Boundary values of the layer potentials}\label{Subsec:NP-Op}

The spirit of the transmission problem~\eqref{Eq:Helm-Prob} is to determine the solution $u$ based on its behaviour upon crossing the boundary $\bord$.
For that matter, the problem focuses on the intertwinement of the interior and exterior problems.
Nonetheless, a transmission solution can always be thought of as the pairing of a solution to the interior problem and one to the exterior one.
In this section, we introduce the operators allowing this change in perspective.

\subsubsection{Neumann-Poincaré operators}

The layer potential operators $\Scal_k$ and $\Dcal_k$ allow to reconstruct the solution to the transmission problem~\eqref{Eq:Helm-Prob} from its jumps in trace and in normal derivative.
To bind this problem, set on the whole space, to problems set on either the interior or the exterior domains, we introduce operators allowing to recover the boundary values of the transmission solution.

\begin{definition}
Let $\Omega$ be a two-sided admissible domain and let $k>0$.
The Neumann-Poincaré operator associated to the Helmholtz transmission problem~\eqref{Eq:Helm-Prob} is defined as
\begin{align*}
\Kcal_k:\Hp&\longrightarrow\Hp\\
f&\longmapsto \frac12(\Tr^i_{\bord}+\Tr^e_{\bord})\Dcal_k f.
\end{align*} 
\end{definition}

As in the classical case~\cite[Theorem 3.1.2]{nedelec_acoustic_2011}, the adjoint of the Neumann-Poincaré operator is connected to the boundary values of the single layer potential operator.

\begin{proposition}\label{Prop:K*}
Let $\Omega$ be a two-sided admissible domain and let $k>0$.
The adjoint of the Neumann-Poincaré operator $\Kcal_k$ is given by
\begin{align*}
\Kcal_k^*:\Hm&\longrightarrow\Hm\\
g&\longmapsto\frac12\left(\ddny{}i+\ddny{}e\right)\Scal_k g.
\end{align*}
\end{proposition}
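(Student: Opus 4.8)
The plan is to verify the defining relation of the adjoint, namely that
$\langle g,\Kcal_k f\rangle_{\Hm,\Hp}=\big\langle \tfrac12(\ddny ui+\ddny ue)\Scal_k g,\,f\big\rangle_{\Hm,\Hp}$ holds for all $f\in\Hp$ and $g\in\Hm$, and then conclude by uniqueness of the adjoint. Write $u:=\Scal_k g$ and $v:=\Dcal_k f$. By construction (Definitions~\ref{Def:SLP-Helm} and~\ref{Def:DLP-Helm}) these satisfy $\llbracket\Tr u\rrbracket=0$, $\llbracket\ddn u\rrbracket=g$, $\llbracket\Tr v\rrbracket=-f$, $\llbracket\ddn v\rrbracket=0$; both solve $(\Delta+k^2)\cdot=0$ on $\nbord$ and both satisfy the SRC~\eqref{Eq:SRC}; moreover $\tfrac12(\Tr^i v+\Tr^e v)=\Kcal_k f$.

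First I would reduce the claim to a single symmetry identity. Using the elementary bookkeeping identity $\langle\llbracket a\rrbracket,\{b\}\rangle+\langle\{a\},\llbracket b\rrbracket\rangle=\langle a^i,b^i\rangle-\langle a^e,b^e\rangle$, valid for any pair of interior/exterior quantities $a^{i,e}$, $b^{i,e}$ (expand the averages $\{\cdot\}$ and jumps $\llbracket\cdot\rrbracket$ and collect), applied to $a=\ddn u\in\Hm$ and $b=\Tr v\in\Hp$, one finds that $\langle g,\Kcal_k f\rangle-\big\langle\tfrac12(\ddny ui+\ddny ue),f\big\rangle$ equals exactly $P:=\langle\ddny ui,\Tr^i v\rangle-\langle\ddny ue,\Tr^e v\rangle$. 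Thus everything reduces to proving $P=0$.

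Second I would establish $P=0$ via the symmetric (second) Green identity together with the SRC, in the spirit of the truncation used earlier. Fix a large ball $B\supset\overline\Omega$ and apply Green's formula (Theorem~\ref{Th:Green}) on $\Omega$ and on the bounded shell $B\setminus\overline\Omega$ to the pair $(u,v)$, assembling the two normal-derivative contributions on $\bord$ into $P$ and collecting a single Wronskian-type term $\int_{\partial B}(\partial_\nu u\, v-u\,\partial_\nu v)\,\dsig$ on the outer sphere. The crucial point is that $u$ and $v$ solve the \emph{same} Helmholtz equation, so the bulk integrand $(\Delta u)v-u(\Delta v)=(-k^2+k^2)uv$ vanishes identically; this is precisely what makes the identity bilinear and symmetric, so that no complex conjugation enters (in contrast with the coercivity estimate~\eqref{Eq:Ineq-Re-PS}). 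This is also where $\llbracket\Tr u\rrbracket=0$ and $\llbracket\ddn v\rrbracket=0$ are used, to let the two $\bord$-contributions telescope into $P$ with no leftover cross terms.

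Finally, the far-field term vanishes as $B\uparrow\R^n$: writing $\partial_\nu u\, v-u\,\partial_\nu v=(\partial_\nu u-\mathrm i k u)v-u(\partial_\nu v-\mathrm i k v)$, the two $\mathrm i k\,uv$ terms cancel (again because the wavenumber is common), and each remaining factor pairs an $\underset{|x|\to\infty}{o}(|x|^{(1-n)/2})$ quantity with an $O(|x|^{(1-n)/2})$ one by the SRC, so the integrand is $o(|x|^{1-n})$ and its integral over $\partial B$ is $o(1)$. Hence $P=0$, which yields the stated formula for $\Kcal_k^*$. The main obstacle is the careful bookkeeping of the second step: matching the paper's sign convention for the interior and exterior normal derivatives (both taken with the outward normal to $\Omega$), justifying the Green identity on the shell for functions that are only $H^1_{\mathrm{loc_b}}$ through the same truncation device as in Lemma~\ref{Lem:Truncation}, and checking that the common-wavenumber cancellation is exactly what removes the conjugate, so that the resulting pairing is the transpose rather than the Hermitian adjoint.
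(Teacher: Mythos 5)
Your proof is correct, but its decisive step takes a genuinely different route from the paper's. The shared part is the algebraic core: both arguments reduce the adjoint relation to the single identity $\langle\ddny ui,\Tr^i v\rangle=\langle\ddny ue,\Tr^e v\rangle$ for $u=\Scal_kg$, $v=\Dcal_kf$ (you do this telescoping first; the paper does it in the final display of its proof), and both treat the interior pairing by Green's formula on $\Omega$. The divergence is in the exterior pairing. The paper keeps the truncation ball $B$ \emph{fixed}, writes the exterior term through the Dirichlet-to-Neumann operator $\tilde\Lambda$ on $\partial B$, and then must prove the symmetry of that $\partial B$-term (its identity~\eqref{Eq:Pairing_LPs}); it does so indirectly, by interposing a smoother domain $\hat\Omega$ between $\Omega$ and $B$, re-expressing $\Scal_kg$ and $\Dcal_kf$ outside $\hat\Omega$ as layer potentials of $\hat\Omega$, and invoking the already-known Lipschitz-case version of the very proposition being proved. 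You instead let the ball expand and kill the outer Wronskian $\int_{\partial B}(\partial_\nu u\,v-u\,\partial_\nu v)\,\dsig$ directly with the radiation condition; this is the classical, self-contained route, with no appeal to the smooth-domain theory. Two points deserve care in your write-up: first, the SRC only provides the $o(|x|^{(1-n)/2})$ estimate on $\partial_\nu u-\mathrm iku$, while the companion bound $u,v=O(|x|^{(1-n)/2})$ is a separate fact -- cheapest here via the representations of Propositions~\ref{Prop:SLP-Repr} and~\ref{Prop:DLP-Repr} (which precede this statement) and the standard far-field asymptotics of $G_k$; second, your insistence on the unconjugated, bilinear pairing (transpose rather than Hermitian adjoint) is not cosmetic but is actually more robust than the paper's own computation, whose interior symmetry identity silently identifies $k^2$ with $\bar k^2$ and is therefore only literally valid when $k^2\in\R$, whereas the common-wavenumber cancellation in your bilinear version holds for every $k\in\C$.
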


\begin{proof}
Let $f\in\Hp$ and $g\in\Hm$. By definition of $\Kcal_k$ and of $\Scal_k$ (Definition~\ref{Def:SLP-Helm}), it holds (all the duality pairings are on $(\Hm,\Hp)$)
\begin{equation*}
2\langle g,\Kcal_k f\rangle=\left\langle\left\llbracket\ddn{}\Scal_k g\right\rrbracket,(\Tr_{\bord}^i+\Tr_{\bord}^e)\Dcal_kf\right\rangle.
\end{equation*}
On $\Omega$, Green's formula yields
\begin{align*}
\left\langle\ddny{}i\Scal_kg,\Tr_{\bord}^i(\Dcal_kf)\right\rangle&=\int_\Omega\nabla(\Scal_kg)\cdot\nabla(\overline{\Dcal_kf})\,\dx-k^2\int_\Omega(\Scal_kg)(\overline{\Dcal_kf})\,\dx\\
&=\overline{\left\langle\ddn{} \Dcal_kf,\Tr(\Scal_kg)\right\rangle}.
\end{align*}
On $\overline{\Omega}^c$, the non-integrability of the layer potentials at infinity prevents from performing the same computations directly.
Instead, we use a truncation once again.
If $B$ is a large ball containing $\Omega$ and $U\coloneq B\backslash\overline\Omega$, then -- introducing $\chi\in\mathscr{D}(\R^n)$ such that $\chi|_U=1$ -- it holds
\begin{multline*}
\left\langle\ddny{}e\Scal_kg,\Tr_{\bord}^e(\Dcal_kf)\right\rangle=\int_U\nabla(\Scal_kg)\cdot\nabla(\overline{\Dcal_kf})\,\dx-k^2\int_U(\Scal_kg)(\overline{\Dcal_kf})\,\dx\\
-\int_{\partial B}\tilde\Lambda_{\partial B}(\Tr_{\partial B}(\Scal_kg))\Tr_{\partial B}(\overline{\Dcal_kf})\dsig.
\end{multline*}
where $\tilde\Lambda_{\partial B}$ was defined by~\eqref{Eq:Lambda_tilde}, see Remark~\ref{Rem:PS_B'-L2}.
Note that by elliptic regularity, the layer potentials are smooth away from $\bord$ so that, in particular, their traces on $\partial B$ are in $L^2(\partial B,\sigma)$.
Since Proposition~\ref{Prop:K*} is known to hold in the classical case (see~\cite{nedelec_acoustic_2011} for instance), we can deduce
\begin{equation}\label{Eq:Pairing_LPs}
\int_{\partial B}\tilde\Lambda_{\partial B}(\Tr_{\partial B}(\Scal_kg))\Tr_{\partial B}(\overline{\Dcal_kf})\dsig=\int_{\partial B}\overline{\tilde\Lambda_{\partial B}(\Tr_{\partial B}(\Dcal_kf))}\Tr_{\partial B}(\Scal_kg)\dsig.
\end{equation}
Indeed, let $\hat\Omega$ be a smoother domain lying between $\Omega$ and $B$ for which $-k^2$ is neither a Dirichlet nor Neumann interior eigenvalue (a hypercube of suitable size for instance).
Then, Proposition~\ref{Prop:K*} holds when the domain is $\hat\Omega$.
Denoting by $\hat\Scal_k$ and $\hat\Dcal_k$ the layer potential operators for the Helmholtz transmission problem across the boundary of $\hat\Omega$, the same computations as above yield, for all $\hat f\in\Bcal(\partial\hat\Omega)$ and $\hat g\in\Bcal'(\partial\hat\Omega)$,
\begin{equation*}
\int_{\partial B}\tilde\Lambda_{\partial B}(\Tr_{\partial B}(\hat\Scal_k\hat g))\Tr_{\partial B}(\overline{\hat\Dcal_k\hat f})\dsig=\int_{\partial B}\overline{\tilde\Lambda_{\partial B}(\Tr_{\partial B}(\hat\Dcal_k\hat f))}\Tr_{\partial B}(\hat\Scal_k\hat g)\dsig.
\end{equation*}
By hypothesis on $\hat\Omega$ (see the proof of Proposition~\ref{Prop:IE-D-N} for details), for all $f\in\Hp$ and $g\in\Hm$, there exist $\hat f\in\Bcal(\partial\hat\Omega)$ and $\hat g\in\Bcal'(\partial\hat\Omega)$ such that $\Tr_{\partial B}(\Scal_k g)=\Tr_{\partial B}(\hat\Scal_k\hat g)$ and $\Tr_{\partial B}(\Dcal_k g)=\Tr_{\partial B}(\hat\Dcal_k\hat g)$ and~\eqref{Eq:Pairing_LPs} follows.
Hence,
\begin{equation*}
\left\langle\ddny{}e\Scal_kg,\Tr_{\bord}^e(\Dcal_kf)\right\rangle=\overline{\left\langle\ddn{} \Dcal_kf,\Tr(\Scal_kg)\right\rangle},
\end{equation*}
which leads to
\begin{align*}
2\left\langle g,\Kcal_k f\right\rangle&=\left\langle\ddny{}i\Scal_kg,\Tr_{\bord}^e(\Dcal_kf)\right\rangle-\left\langle\ddny{}e\Scal_kg,\Tr_{\bord}^i(\Dcal_kf)\right\rangle\\
&=-2\left\langle\left(\ddny{}i+\ddny{}e\right)\Scal_kg,\llbracket\Tr(\Dcal_kf)\rrbracket\right\rangle\\
&=2\left\langle\left(\ddny{}i+\ddny{}e\right)\Scal_kg,f\right\rangle,
\end{align*}
by the same computations.
\end{proof}

Given the definition of the operator $\Kcal_k$ and the characterisation of its adjoint $\Kcal_k^*$, the traces of the double layer potential can be expressed as
\begin{align*}
\Tr^i\circ\Dcal_k=-\frac12I+\Kcal_k&&\mbox{and}&&\Tr^e\circ\Dcal_k=\frac12I+\Kcal_k,
\end{align*}
since $\llbracket\Tr \Dcal_k\rrbracket=-I$, and the normal derivatives of the single layer potential as
\begin{align*}
\ddny{}i\circ\Scal_k=\frac12I+\Kcal_k^*&&\mbox{and}&&\ddny{}e\circ\Scal_k=-\frac12I+\Kcal_k^*,
\end{align*}
since $\llbracket\ddn{}\Scal_k\rrbracket=I$.
Seeking the solutions to the interior and exterior Dirichlet (resp. Neumann) problems for the Helmholtz equation in terms of a double (resp. single) layer potential allows to prove the following result on the invertibility of those operators.

\begin{proposition}\label{Prop:IE-D-N}
Let $\Omega$ be a two-sided admissible domain of $\R^n$ and let $k>0$.
\begin{enumerate}
\item[(i)] If $k^2$ is not an interior Dirichlet eigenvalue of $-\Delta$, then for $h\in\Hp$, the boundary equation
\begin{equation*}
\left(-\frac12I+\Kcal_k\right)f=h,
\end{equation*}
has a unique solution $f\in\Hp$, while for $\varphi\in\Hm$,
\begin{equation*}
\left(-\frac12I+\Kcal_k^*\right)g=\varphi,
\end{equation*}
has a unique solution $g\in\Hm$.
\item[(ii)] If $k^2$ is not an interior Neumann eigenvalue of $-\Delta$, then for $h\in\Hp$, the boundary equation
\begin{equation*}
\left(\frac12I+\Kcal_k\right)f=h,
\end{equation*}
has a unique solution $f\in\Hp$, while for $\varphi\in\Hm$,
\begin{equation*}
\left(\frac12I+\Kcal_k^*\right)g=\varphi,
\end{equation*}
has a unique solution $g\in\Hm$.
\end{enumerate}
\end{proposition}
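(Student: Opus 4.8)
The plan is to treat the two Dirichlet-type statements of (i) and the two Neumann-type statements of (ii) as invertibility of the single boundary operators $-\frac12 I+\Kcal_k=\Tr^i\circ\Dcal_k$ and $\frac12 I+\Kcal_k=\Tr^e\circ\Dcal_k$ on $\Hp$; the corresponding statements for $-\frac12 I+\Kcal_k^*$ and $\frac12 I+\Kcal_k^*$ on $\Hm$ then follow immediately by passing to adjoints, since by Proposition~\ref{Prop:K*} these are exactly $(-\frac12 I+\Kcal_k)^*$ and $(\frac12 I+\Kcal_k)^*$, and the adjoint of a boundedly invertible operator between Hilbert spaces is boundedly invertible. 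To obtain invertibility I would show that each operator is Fredholm of index zero and injective.

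For the Fredholm property I would compare with the $1$-harmonic case $k=\mathrm i$. Since $-1$ is neither an interior Dirichlet nor an interior Neumann eigenvalue of $-\Delta$, both $-\frac12 I+\Kcal_{\mathrm i}$ and $\frac12 I+\Kcal_{\mathrm i}$ are invertible on $\Hp$ (this is the Calder\'on theory for $-\Delta+1$ in~\cite{claret_layer_2025}), so it suffices to prove that $\Kcal_k-\Kcal_{\mathrm i}$ is compact. The crucial observation is that $w\coloneq(\Dcal_k-\Dcal_{\mathrm i})f$ has vanishing jumps in both trace and normal derivative, hence $w\in H^1_{\mathrm{loc_b}}(\R^n)$, and a short computation using $(\Delta+k^2)\Dcal_kf=0$ and $(\Delta-1)\Dcal_{\mathrm i}f=0$ on $\nbord$ shows that $(\Delta+k^2)w=-(k^2+1)\Dcal_{\mathrm i}f$ holds in $\mathscr D'(\R^n)$, the right-hand side lying in $L^2(\R^n)$ with norm controlled by $\|f\|_{\Hp}$. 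Because this equation holds across $\bord$ (which carries no singular part), interior elliptic regularity for $\Delta+k^2$ on $\R^n$ gives $w\in H^2_{\mathrm{loc_b}}(\R^n)$, in particular $w|_B\in H^2(B)$ with $\|w\|_{H^2(B)}\le C\|f\|_{\Hp}$ on a large ball $B\supset\overline{\Omega}$. Since $w$ has no jump, $\Kcal_kf-\Kcal_{\mathrm i}f=\Tr w$, and because $\bord\subset\mathring B$ this map factors as $\Hp\to H^2(B)\hookrightarrow H^1(B)\to\Hp$, the middle embedding being compact by Rellich's theorem on the smooth domain $B$ and the last (trace) map being bounded. Hence $\Kcal_k-\Kcal_{\mathrm i}$ is compact and $\pm\frac12 I+\Kcal_k$ is Fredholm of index zero.

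For injectivity I would use the eigenvalue hypotheses together with uniqueness for the exterior radiating problem. In case (i), if $(-\frac12 I+\Kcal_k)f=\Tr^i\Dcal_kf=0$, set $u\coloneq\Dcal_kf$; then $u|_\Omega$ solves the interior Helmholtz equation with zero Dirichlet trace, so $u|_\Omega=0$ because $k^2$ is not an interior Dirichlet eigenvalue. This forces $\ddny ui=0$, hence $\ddny ue=0$ by the vanishing jump in normal derivative, so $u|_{\overline{\Omega}^c}$ is a radiating solution with zero Neumann datum and thus vanishes by exterior uniqueness; therefore $f=-\llbracket\Tr u\rrbracket=0$. Case (ii) is symmetric: one first kills $u|_{\overline{\Omega}^c}$ via exterior Dirichlet uniqueness, then $u|_\Omega$ via the interior Neumann eigenvalue hypothesis. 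Injectivity combined with Fredholm index zero yields invertibility on $\Hp$, and the $\Kcal_k^*$ statements follow by duality.

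The main obstacle is the compactness of $\Kcal_k-\Kcal_{\mathrm i}$: as stressed in the introduction, the trace operator is not compact in this framework, so one cannot argue as in the Lipschitz case via a gain of boundary regularity, which the fractal boundary does not admit. The resolution is precisely that the relevant difference is globally $H^2$ on $\R^n$, where the boundary is invisible, so compactness is recovered from Rellich's theorem on an auxiliary smooth ball rather than from any boundary smoothing. A secondary point requiring care is the exterior uniqueness invoked in the injectivity step, which must be justified for the complex wavenumbers admitted here, consistently with the radiation condition underlying the well-posedness results of Subsection~\ref{Subsec:LP}.
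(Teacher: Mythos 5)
Your proof is correct, but it takes a genuinely different route from the paper's. The paper argues constructively, using \emph{well-posedness} (existence and uniqueness) of the one-sided problems: given $h\in\Hp$, it solves the interior Dirichlet problem to get $u\in V_k(\Omega)$, then uses well-posedness of the exterior Neumann problem with the SRC to produce the unique $f\in\Hp$ with $\Dcal_kf|_\Omega=u$, whence $\left(-\frac12I+\Kcal_k\right)f=\Tr^i\Dcal_kf=h$; uniqueness of $f$ follows from the same two facts, the adjoint statement follows by the closed range theorem (as in your duality step), and point (ii) is handled the same way with $\Scal_k$ in place of $\Dcal_k$. You instead use only \emph{uniqueness} for the one-sided problems and recover existence from Fredholm theory, by showing $\Kcal_k-\Kcal_{\mathrm i}$ is compact and perturbing off the invertible case $k=\mathrm i$. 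That compactness lemma is the novel ingredient, and your proof of it is sound: since $w=(\Dcal_k-\Dcal_{\mathrm i})f$ has no jump in trace or in normal derivative, the representation formula~\eqref{Eq:Helm-Repr} gives $(\Delta+k^2)w=-(k^2+1)\Dcal_{\mathrm i}f$ in $\mathscr D'(\R^n)$ with no singular term supported on $\bord$, so interior elliptic regularity yields a uniform $H^2$ bound on a large ball and compactness comes from Rellich's theorem on that smooth ball --- a correct way to sidestep the non-compactness of the trace operator, which indeed rules out the classical boundary-smoothing argument. Two caveats. First, your base case (invertibility of $\pm\frac12I+\Kcal_{\mathrm i}$) is delegated to~\cite{claret_layer_2025}; if that reference does not state it in exactly this form, you must run the paper's constructive argument at $k=\mathrm i$, where both spectral hypotheses hold automatically --- so your route does not fully dispense with the paper's argument, it only localises it at a single wavenumber, in exchange for which you get the general case without needing exterior \emph{existence}. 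Second, both proofs rest on uniqueness for the exterior radiating Dirichlet/Neumann problems at the wavenumbers considered (the paper invokes this via `similar arguments to those given in Subsection~\ref{Subsec:LP}'), so the caution you flag about complex $k$ applies equally to the paper's proof and is not a defect specific to yours.
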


\begin{proof}
Let us prove Point (i).
Let $h\in\Hp$.
By hypothesis on $k^2$, there exists a unique $u\in H^1(\Omega)$ such that $(\Delta+k^2)u=0$ on $\Omega$ and $\Tr^iu=h$.
Since the exterior Neumann problem is well-posed (Remark~\ref{Rem:Helm_Ext_WP}), there exists a unique $f\in\Hp$ such that $u=\Dcal_kf|_\Omega$.
It follows that $(-\frac12I+\Kcal_k)f=h$.
If $\tilde f\in\Hp$ satisfies $(-\frac12I+\Kcal_k)\tilde f=h$, then $\Dcal_k\tilde f|_\Omega$ solves the interior Dirichlet problem.
Hence $\Dcal_k\tilde f|_\Omega=u$, and $f=\tilde f$.
The second part of the statement follows by Banach's closed range theorem.
The proof of Point (ii) is similar, considering a single layer potential instead.
\end{proof}

\subsubsection{Calder\'on projectors}
The Neumann-Poincaré operators allow to recover the normal derivatives of the single layer potential and the traces of the double layer potential.
To retrieve the remaining boundary values of the layer potentials, consider the following operators
\begin{align*}
\Vcal_k&\coloneq\Tr\circ\Scal_k:\Hm\longrightarrow\Hp,\\
\Wcal_k&\coloneq-\ddn{}\circ\Dcal_k:\Hp\longrightarrow\Hm.
\end{align*}
By Green's third formula (Theorem~\ref{Th:Green3}), the matrix operator
\begin{equation*}
\Mcal_k\coloneq
\begin{pmatrix}
-\Kcal_k & \Vcal_k \\
\Wcal_k & \Kcal_k^*
\end{pmatrix}
:\Hp\times\Hm\longrightarrow\Hp\times\Hm,
\end{equation*}
allows to recover all the boundary values of a general Helmholtz transmission solution.
In particular, we can define two operators -- known as \emph{Calder\'on projectors} -- which, given jumps in trace and normal derivative, respectively return the interior and exterior boundary values of the associated transmission solution.

\begin{proposition}\label{Prop:Calderon-Proj}
Let $\Omega$ be a two-sided admissible domain of $\R^n$ and let $k>0$.
The operators
\begin{align}
&\Ccal_k^i\coloneq\frac12 I+\Mcal_k &&\mbox{and}&&\Ccal_k^e\coloneq\frac12 I-\Mcal_k,
\end{align}
acting as
\begin{align*}
\Ccal_k^i\begin{pmatrix}\llbracket \Tr u\rrbracket\\\llbracket\ddn u\rrbracket\end{pmatrix}=\begin{pmatrix}\Tr^iu\\\ddny ui\end{pmatrix}
&& \mbox{and}
&& \Ccal_k^e\begin{pmatrix}\llbracket \Tr u\rrbracket\\\llbracket\ddn u\rrbracket\end{pmatrix}=-\begin{pmatrix}\Tr^eu\\\ddny ue\end{pmatrix},
\end{align*}
where $u\in H^1_{\mathrm{loc_b}}(\nbord)$ satisfies $(\Delta+k^2)u=0$ on $\nbord$ and the $\mathrm{SRC}$~\eqref{Eq:SRC}, are linear and continuous projectors.
\end{proposition}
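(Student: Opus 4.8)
The plan is to dispatch the three claims---linearity, continuity, and the projector identities with their prescribed action---in increasing order of difficulty, building throughout on Green's third identity (Theorem~\ref{Th:Green3}) and on the eight boundary-value relations recorded just before Proposition~\ref{Prop:IE-D-N}. Linearity and continuity are immediate: the four entries of $\Mcal_k$, namely $\Kcal_k$, $\Vcal_k=\Tr\circ\Scal_k$, $\Wcal_k=-\ddn{}\circ\Dcal_k$ and $\Kcal_k^*$, are each linear and continuous, being compositions of the continuous layer potentials $\Scal_k,\Dcal_k$ (Definitions~\ref{Def:SLP-Helm} and~\ref{Def:DLP-Helm}) with the trace operator (Theorem~\ref{Th:Trace}) and the normal-derivation operator (Theorem~\ref{Th:Green}); hence $\Mcal_k$, and with it $\Ccal_k^i$ and $\Ccal_k^e$, lie in $\mathcal{L}(\Hp\times\Hm)$.

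Next I would verify the action formula for $\Ccal_k^i$. Fix $(f,g)\in\Hp\times\Hm$ and let $u=\Scal_kg-\Dcal_kf$ be the unique transmission solution with $\llbracket\Tr u\rrbracket=f$ and $\llbracket\ddn u\rrbracket=g$ provided by Theorem~\ref{Th:Green3}. Because $\Scal_kg$ has no jump in trace and $\Dcal_kf$ none in normal derivative, $\Tr^i(\Scal_kg)=\Vcal_kg$ and $\ddny{}i(\Dcal_kf)=-\Wcal_kf$; inserting these together with $\Tr^i\circ\Dcal_k=-\frac12I+\Kcal_k$ and $\ddny{}i\circ\Scal_k=\frac12I+\Kcal_k^*$ gives
\begin{equation*}
\begin{pmatrix}\Tr^iu\\\ddny ui\end{pmatrix}
=\begin{pmatrix}\tfrac12f-\Kcal_kf+\Vcal_kg\\\Wcal_kf+\tfrac12g+\Kcal_k^*g\end{pmatrix}
=\left(\tfrac12I+\Mcal_k\right)\begin{pmatrix}f\\g\end{pmatrix},
\end{equation*}
which is exactly the claimed action of $\Ccal_k^i$. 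This is mostly bookkeeping; the one point of care is the sign convention in the one-sided normal derivatives.

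The crux is that $\Ccal_k^i$ and $\Ccal_k^e$ are projectors. Since $\Ccal_k^i+\Ccal_k^e=I$ holds by definition, it suffices to prove $(\Ccal_k^i)^2=\Ccal_k^i$: writing $P=\Ccal_k^i$ and $Q=I-P=\Ccal_k^e$, idempotency of $P$ forces $Q^2=I-2P+P^2=Q$ and $PQ=QP=0$ automatically (equivalently $\Mcal_k^2=\tfrac14I$). To prove $(\Ccal_k^i)^2=\Ccal_k^i$ I would argue by extension by zero rather than by expanding the product: let $w$ equal $u$ on $\Omega$ and $0$ on $\overline{\Omega}^c$. Then $w\in H^1_{\mathrm{loc_b}}(\nbord)$ solves $(\Delta+k^2)w=0$ on $\nbord$ and trivially satisfies the SRC, so it is again a transmission solution, and its jumps are precisely its own interior boundary values, $\llbracket\Tr w\rrbracket=\Tr^iu$ and $\llbracket\ddn w\rrbracket=\ddny ui$, that is $\Ccal_k^i(f,g)$, while its interior boundary values again equal $(\Tr^iu,\ddny ui)$. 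Applying the action formula just proved to $w$ yields $\Ccal_k^i\big(\Ccal_k^i(f,g)\big)=\Ccal_k^i(f,g)$. The companion identity for $\Ccal_k^e$ then follows from $\Ccal_k^e=I-\Ccal_k^i$ together with the fact that the prescribed jumps are the differences $\Tr^iu-\Tr^eu$ and $\ddny ui-\ddny ue$ of one-sided boundary values, which isolates the exterior data (with the sign dictated by the common normal-derivative convention).

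The main obstacle is exactly this idempotency step, and the delicate points are structural rather than computational: one must check that the extension-by-zero field is a genuine element of $H^1_{\mathrm{loc_b}}(\nbord)$ satisfying the SRC, so that the uniqueness in Green's third identity (Theorem~\ref{Th:Green3}) applies and identifies it with $\Scal_k\llbracket\ddn w\rrbracket-\Dcal_k\llbracket\Tr w\rrbracket$, and one must carry the sign bookkeeping between interior and exterior normal derivatives consistently, since it is precisely this interplay that makes the half-data recombine correctly and guarantees $\Mcal_k^2=\tfrac14I$.
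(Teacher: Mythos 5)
Your proposal is correct and takes essentially the same approach as the paper: the crucial idempotency step---noting that the extension by zero $u\mathds{1}_\Omega$ is again a transmission solution whose jumps are precisely the interior boundary values of $u$, so that uniqueness in Green's third identity forces $\Ccal_k^i\circ\Ccal_k^i=\Ccal_k^i$---is exactly the paper's argument. The additional material you supply (continuity of the entries of $\Mcal_k$, the explicit verification of the action formula, and deducing the $\Ccal_k^e$ case from $\Ccal_k^e=I-\Ccal_k^i$ rather than repeating the mirror construction) only fills in details the paper leaves implicit.
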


\begin{proof}
The fact that $\Ccal_k^i$ maps the jumps of a transmission solution to its interior boundary values directly follows from the definitions of the operators involved.
If $u\in H^1_{\mathrm{loc_b}}(\nbord)$ is a solution to the Helmholtz transmission problem, then $v\coloneq u\mathds{1}_\Omega\in H^1_{\mathrm{loc_b}}(\nbord)$ is also a solution to the Helmholtz transmission problem with $\llbracket\Tr v\rrbracket=\Tr_iu$ and $\llbracket\ddn v\rrbracket=\ddny ui$.
It follows that $\Ccal_k^i$ is a projector.
The proof for $\Ccal_k^e$ is similar.
\end{proof}
This allows to deduce the so-called \emph{Calder\'on relations} for the Helmholtz transmission problem (see~\cite[Theorem 3.1.3]{nedelec_acoustic_2011} in the classical case):
\begin{align*}
&\begin{cases}
\Kcal_k\Vcal_k=\Vcal_k\Kcal_k^*,\\
\Wcal_k\Kcal_k=\Kcal_k^*\Wcal_k,
\end{cases}
&&\mbox{and}
&&\begin{cases}
\Kcal_k^2+\Vcal_k\Wcal_k=\frac14I,\\
(\Kcal_k^*)^2+\Wcal_k\Vcal_k=\frac14I.
\end{cases}
\end{align*}

\section{Scattering problem}\label{Sec:Scattering}

The Calder\'on projectors from Theorem~\ref{Prop:Calderon-Proj} allow to view the transmission problem as the pairing of an interior and an exterior problem, with Dirichlet or Neumann boundary conditions.
In this section we use the analysis carried out on the transmission problem to study the exterior problem, also known as scattering problem, with Robin (impedance) boundary condition.
% Throughout this section, we will always assume that the wavenumber $k>0$ (involved in the problem as $k^2$) is such that $\mathrm{Im}(k)\ge0$.

\subsection{Intrinsic impedance boundary condition}\label{Subsec:Robin}

Let $\Omega$ be a two-sided admissible domain of $\R^n$, let $k>0$ and $L\in\mathcal{L}(\Hp)$ playing the part of the impedance coefficient.
Consider the following impedance boundary value problem
\begin{equation}\label{Eq:Rob-Prob}
\begin{cases}
(\Delta+k^2)u=0 &\mbox{on }\overline{\Omega}^c,\\
\displaystyle\ddny ue\Big|_{\bord}+L\Tr^e_{\bord}u=h,\\
\mathrm{SRC},
\end{cases}
\end{equation}
where $h\in\Hm$. Due to the absence of a specified boundary measure (hence, of an $L^2$ space on $\bord$), the Robin boundary condition is to be understood in the weak sense, as
\begin{equation}\label{Eq:BC-Rob}
\forall f\in\Hp,\quad \left\langle\ddny ue\Big|_{\bord},f\right\rangle_{\Bcal'\!,\,\Bcal(\bord)}+\left\langle L\Tr^e_{\bord}u,f\right\rangle_{\Bcal(\bord)}=\langle h,f\rangle_{\Bcal'\!,\,\Bcal(\bord)}.
\end{equation}
Bearing in mind that the Riesz isometry between $\Hp$ and $\Hm$ is given by the $1$-harmonic Poincaré-Steklov operator $\Lambda_{\mathrm i}$ from~\eqref{Eq:PS-i},~\eqref{Eq:BC-Rob} could also be written as
\begin{equation*}
\forall f\in\Hp,\quad \left\langle\ddny ue\Big|_{\bord},f\right\rangle_{\Bcal'\!,\,\Hp}+\left\langle \Lambda_{\mathrm i}L\Tr^e_{\bord}u,f\right\rangle_{\Bcal'\!,\,\Hp}=\langle h,f\rangle_{\Bcal'\!,\,\Hp}.
\end{equation*}
A special case is when there exists a Lipschitz-continuous $\lambda:\overline{\Omega}^c\to\C$ such that for $f\in\Hp$, $L f=\lambda|_{\bord} f$.
It is known (see~\cite[Lemma 3.4]{claret_poincare-steklov_2025}) that the regularity of $\lambda$ implies that for $v\in H^1_{\mathrm{loc_b}}(\overline{\Omega}^c)$, $\lambda\Tr^e_{\bord}v=\Tr^e_{\bord}(\lambda v)$, and the boundary condition $\ddn u+\lambda\Tr u$ is well-defined in the sense of~\eqref{Eq:BC-Rob}.

Still due to the non-integrability of the solutions to the Helmholtz equation at infinity, we perform a truncation as in Subsection~\ref{Subsec:LP} and study the problem on $U\coloneq\overline{\Omega}^c\cap B$ where $B$ is a large open ball containing $\overline{\Omega}$.
The associated variational formulation is given by
\begin{multline}\label{Eq:Intrinsic_FV}
\forall v\in H^1(U),\quad \int_{U}\nabla u\cdot\nabla \bar v\,\dx-k^2\int_{U}u\bar v\,\dx-\langle L \Tr^e_{\bord}u,\Tr^e_{\bord}v\rangle_{\Hp}\\
-\int_{\partial B}\tilde\Lambda_{\partial B}(\Tr_{\partial B}u)\Tr_{\partial B}\bar v\,\dsig=-\langle h,\Tr^e_{\bord}v\rangle_{\Bcal'\!,\,\Hp}.
\end{multline}

Following the method from~\cite[Theorem 3.37]{colton_integral_2013}, we give a sufficient condition for the uniqueness of the solutions to~\eqref{Eq:Rob-Prob}.

\begin{proposition}\label{Prop:Impedance-Uniqueness}
Let $\Omega$ be a two-sided admissible domain of $\R^n$, let $k>0$ and assume $L\in\mathcal{L}(\Hp)$ is such that $\mathrm{Im}(\langle L f,f\rangle_{\Hp})\ge0$ for $f\in\Hp$.
% Then, the unique solution $u\in H^1_{\mathrm{loc_b}}(\overline\Omega^c)$ to~\eqref{Eq:Rob-Prob} for $h=0$ is $u=0$.
Then, $u=0$ is the unique solution to~\eqref{Eq:Rob-Prob} for $h=0$ in $H^1_{\mathrm{loc_b}}(\overline\Omega^c)$.
\end{proposition}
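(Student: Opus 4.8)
The standard uniqueness argument for exterior Helmholtz problems rests on Rellich's lemma together with the unique continuation principle, once one shows that the far-field pattern of the solution vanishes. The plan is to establish that vanishing by combining Green's formula on a large annulus with the sign condition on $L$.

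First I would take $u\in H^1_{\mathrm{loc_b}}(\overline{\Omega}^c)$ solving~\eqref{Eq:Rob-Prob} with $h=0$, so that $(\Delta+k^2)u=0$ on $\overline{\Omega}^c$, the intrinsic Robin condition $\ddny ue|_{\bord}=-L\Tr^e_{\bord}u$ holds in the sense of~\eqref{Eq:BC-Rob}, and $u$ satisfies the SRC. Let $B=B_R(0)$ be a large ball containing $\overline\Omega$ and apply Green's formula (Theorem~\ref{Th:Green}) on the annular region $U_R\coloneq B\cap\overline{\Omega}^c$. Testing against $u$ itself and using $(\Delta+k^2)u=0$, the volume term $\int_{U_R}(|\nabla u|^2-k^2|u|^2)\,\dx$ is real up to the factor $\overline{k^2}$; the key is that the boundary contributions split into a piece on $\bord$ and a piece on $\partial B$. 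On $\bord$ the exterior normal derivative pairing is, by the boundary condition, $\langle\ddny ue|_{\bord},\Tr^e_{\bord}u\rangle=-\langle L\Tr^e_{\bord}u,\Tr^e_{\bord}u\rangle_{\Hp}$. Taking the imaginary part of the whole identity (after multiplying by $\bar k$ to match the SRC normalisation, following the structure of~\cite[Theorem 3.37]{colton_integral_2013}) makes the volume term drop out, and the sign hypothesis $\mathrm{Im}(\bar k\langle Lf,f\rangle_{\Hp})\ge0$ forces the boundary term on $\partial B$ to have a definite sign.

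The decisive step is then to show $\lim_{R\to\infty}\mathrm{Im}\int_{\partial B_R}\bar u\,\ddny ue|_{\partial B_R}\,\dsig\le 0$ while the SRC simultaneously forces the associated nonnegative far-field energy $\int_{\partial B_R}|u|^2\,\dsig$ to have a nonnegative limit; squaring out the SRC expression $|\partial_{|x|}u-\mathrm iku|^2$ and integrating over $\partial B_R$ yields, in the limit, that $\int_{S^{n-1}}|u_\infty|^2=0$ where $u_\infty$ is the far-field pattern. Hence $u_\infty\equiv 0$, and Rellich's lemma gives $u=0$ on the unbounded component of $\overline{\Omega}^c\setminus\overline B$. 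Finally, since $u$ solves the Helmholtz equation on the connected exterior domain $\overline{\Omega}^c$ and vanishes on an open subset, unique continuation yields $u\equiv 0$ on all of $\overline{\Omega}^c$.

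I expect the main obstacle to be the rigorous handling of the boundary pairing on $\bord$ in the intrinsic (measure-free) framework: the duality $\langle\ddny ue|_{\bord},\Tr^e_{\bord}u\rangle_{\Bcal',\Bcal}$ must be identified correctly with $-\langle L\Tr^e_{\bord}u,\Tr^e_{\bord}u\rangle_{\Hp}$ via the Riesz isometry of~\eqref{Eq:BC-Rob}, and one must verify that $u\in H^1_\Delta$ near $\bord$ so that Green's formula (Theorem~\ref{Th:Green}(iii)) legitimately applies there — the far-field and SRC analysis on $\partial B$ is classical, but reconciling it with the nonstandard trace duality is the delicate part. One subtlety worth flagging: the hypothesis is stated only for real-valued $\langle Lf,f\rangle$-type quantities implicitly through $\mathrm{Im}(\bar k\langle Lf,f\rangle_{\Hp})$, so I would take care that $\langle Lf,f\rangle_{\Hp}$ need not be real and that it is precisely its $\bar k$-weighted imaginary part that enters the energy balance with the correct sign.
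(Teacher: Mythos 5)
Your proposal is correct and takes essentially the same approach as the paper: both derive from the homogeneous impedance condition and the dissipativity hypothesis the sign condition $\mathrm{Im}\big(k\overline{\langle\ddny ue,\Tr^e_{\bord}u\rangle}_{\Bcal',\Bcal(\bord)}\big)\ge 0$, and then conclude by the Rellich/SRC uniqueness argument. The only difference is one of packaging: the paper invokes a counterpart of~\cite[Theorem 3.12]{colton_integral_2013} for that second step, whereas you spell out its proof (Green's identity on an annulus, squaring out the SRC, Rellich's lemma, unique continuation).
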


\begin{proof}
If $u\in H^1_{\mathrm{loc_b}}(\overline\Omega^c)$ is a solution to~\eqref{Eq:Rob-Prob} for $h=0$, then it holds
\begin{align*}
k\overline{\left\langle\ddny ue,\Tr^e_{\bord} u\right\rangle}_{\Bcal'\!,\,\Bcal(\bord)}=-k\langle\Tr^e_{\bord} u,L\Tr^e_{\bord} u\rangle_{\Hp}
\end{align*}
It follows that
\begin{equation*}
\mathrm{Im}\left(k\overline{\left\langle\ddny ue,\Tr^e_{\bord} u\right\rangle}_{\Bcal'\!,\,\Bcal(\bord)}\right)\ge0
\end{equation*}
From there, a counterpart of~\cite[Theorem 3.12]{colton_integral_2013} in the case $n\ge 2$ and the framework of the trace spaces holds (Green's formula still being valid in that case, see Theorem~\ref{Th:Green}), and yields $u=0$.
\end{proof}

\begin{remark}
The condition regarding the impedance $L$ arising in Proposition~\ref{Prop:Impedance-Uniqueness}, namely
\begin{equation*}
\mathrm{Im}(\langle L f,f\rangle_{\Hp})\ge0\quad\mathrm{for}\;f\in\Hp,
\end{equation*}
can be restated in terms of dissipative operators, see for instance~\cite{phillips_dissipative_1959}.
A linear operator $A:H\to H$ on a Hilbert space $H$ is said to be dissipative when it satisfies
\begin{equation*}
\forall x\in H,\quad \mathrm{Re}(\langle Ax,x \rangle_H)\le 0.
\end{equation*}
With that terminology, the condition reads `$\mathrm{i}L$ is dissipative'.
\end{remark}

Since the solutions to~\eqref{Eq:Rob-Prob} are unique when $\mathrm{i}L$ is dissipative, the problem is well-posed anytime the underlying operator is Fredholm (of index $0$).
Denote by $A:H^1(U)\to H^1(U)$ the operator associated to the variational formulation~\eqref{Eq:Intrinsic_FV}, that is, for $u,v\in H^1(U)$,
\begin{multline}\label{Eq:Rob_Var_Op}
\langle Au,v\rangle_{H^1(U)}\coloneq\int_{U}\nabla u\cdot\nabla \bar v\,\dx-k^2\int_{U}u\bar v\,\dx\\
-\langle L \Tr^e_{\bord}u,\Tr^e_{\bord}v\rangle_{\Hp}-\int_{\partial B}\tilde\Lambda_{\partial B}(\Tr_{\partial B}u)\Tr_{\partial B}\bar v\,\dsig.
\end{multline}
In general, the boundary term $\langle L \Tr^e_{\bord}u,\Tr^e_{\bord}v\rangle_{\Hp}$ is neither compact nor coercive on $H^1(U)$.
However, the well-posedness of the exterior Dirichlet problem (Remark~\ref{Rem:Helm_Ext_WP}) induces the existence of a constant $c(\Omega,k)>0$ such that for all $u\in V_k(U)$, where $V_k(U)$ is the closed subspace of $H^1(U)$ defined by
\begin{equation}\label{Eq:V_k}
V_k(U)\coloneq\{v\in H^1(U)\;|\;(\Delta+k^2)v=0\mbox{ weakly on }U\},
\end{equation}
it holds
\begin{equation}\label{Eq:Trace-Estimates-Vk}
c(\Omega,k)\|u\|_{H^1(U)}\le\|\Tr^e_{\bord}u\|_{\Hp}\le \|\mathrm E_\Omega\|\|u\|_{H^1(U)},
\end{equation}
where $\mathrm E_\Omega:H^1(\Omega)\to H^1(\Omega)\oplus V_{\mathrm i}(\overline{\Omega}^c)$ denotes the $1$-harmonic extension from $\Omega$ to $\R^n$, such that for $w\in H^1(\Omega)$, $(\mathrm E_\Omega w)|_\Omega=w$ and $(-\Delta+1)\mathrm E_\Omega w=0$ on $\overline{\Omega}^c$.
The second inequality follows from the trace theorem (Theorem~\ref{Th:Trace}) and the equivalence between the interior and exterior trace norms~\cite[Proposition 3.4]{claret_convergence_2025}.
Since $H^1(U)\hookrightarrow L^2(U)$ is compact, the operator $A$ is Fredholm (of index $0$) on $V_k(U)$ anytime
\begin{equation*}
w\in V_k(U)\longmapsto\|w\|^2_{H^1(U)}-\langle L\Tr^e_{\bord}w,\Tr^e_{\bord}w\rangle_{\Hp}-\int_{\partial B}\tilde\Lambda_{\partial B}(\Tr_{\partial B}w)\Tr_{\partial B}\bar w\,\dsig
\end{equation*}
is coercive (up to a sign).
By~\eqref{Eq:Ineq-Re-PS} and~\eqref{Eq:Trace-Estimates-Vk}, this holds anytime
\begin{multline}\label{Eq:Coercive-condition}
\|L\|_{\mathcal{L}(\Hp)}<\|\mathrm E_\Omega\|^{-2}\qquad\mbox{or}\qquad\\\inf_{\|w\|_{H^1(U)}=1}\mathrm{Re}\left(\langle L\Tr^e_{\bord}w,\Tr^e_{\bord}w\rangle_{\Hp}+\int_{\partial B}\tilde\Lambda_{\partial B}(\Tr_{\partial B}w)\Tr_{\partial B}\bar w\,\dsig\right)>1.
\end{multline}
Denoting, for $k>0$,
\begin{equation}\label{Eq:Set-Impedances}
\mathscr{L}_k\coloneq\big\{L\in\mathcal{L}(\Hp)\;\big|\;\mathrm{i}L\mbox{ is dissipative}\quad\mbox{and}\quad\eqref{Eq:Coercive-condition}\big\},
\end{equation}
the (nonempty) set of suitable impedances, we state the well-posedness of the Robin problem as follows, reversing the truncation as in Lemma~\ref{Lem:Truncation}.

\begin{proposition}\label{Prop:Rob-WP}
Let $\Omega$ be a bounded two-sided admissible domain of $\R^n$ and let $k>0$.
If $L\in\mathscr{L}_k$, then, for all $h\in\Hm$, the impedance problem~\eqref{Eq:Rob-Prob} is well-posed on $H^1_{\locb}(\overline{\Omega}^c)$.
\end{proposition}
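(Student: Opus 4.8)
The plan is to combine the uniqueness already secured in Proposition~\ref{Prop:Impedance-Uniqueness} with a Fredholm argument on the truncated variational formulation, so that uniqueness is promoted to full well-posedness through Fredholm's alternative. For uniqueness, observe that by the very definition of $\mathscr L_k$ in~\eqref{Eq:Set-Impedances}, any $L\in\mathscr L_k$ makes $-\mathrm i\bar kL$ dissipative, that is $\mathrm{Im}(\bar k\langle Lf,f\rangle_{\Hp})\ge0$ for all $f\in\Hp$. This is exactly the hypothesis of Proposition~\ref{Prop:Impedance-Uniqueness}, which forces the only solution of~\eqref{Eq:Rob-Prob} with $h=0$ to be $u=0$; hence a solution, if it exists, is unique.

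For existence, I would read the variational formulation on $U\coloneq B\backslash\overline\Omega$ displayed above as an equation $Au=\ell_h$ in $H^1(U)$, with $A\in\mathcal L(H^1(U))$ associated (via the Riesz isometry) to its left-hand sesquilinear form and $\ell_h$ to its right-hand side. Any solution on $U$ is glued to the exterior Helmholtz--SRC extension across $\partial B$ to produce a genuine solution in $H^1_{\mathrm{loc_b}}(\overline\Omega^c)$, exactly as in Lemma~\ref{Lem:Truncation}, and conversely any solution of~\eqref{Eq:Rob-Prob} restricts to a solution on $U$; it therefore suffices to show $A$ is a bijection with continuous inverse. The crux is the Fredholm decomposition of $A$ into a coercive (up to sign) part plus a compact perturbation. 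The term $-(k^2+1)\int_U u\bar v\,\dx$ gives a compact operator since $H^1(U)\hookrightarrow L^2(U)$ is compact ($U$ bounded, Rellich), and the $\partial B$-term is a compact perturbation because $\Tr_{\partial B}:V_k(U)\to L^2(\partial B,\sigma)$ is compact (interior elliptic regularity away from $\bord$). The structural novelty, stressed in the introduction, is that $\Tr^e_{\bord}$ is \emph{not} compact here, so the impedance term $\langle L\Tr^e_{\bord}u,\Tr^e_{\bord}v\rangle_{\Hp}$ cannot be placed in the compact part and must be retained in the coercive part. Everything thus reduces to the coercivity (up to sign) on $V_k(U)$ of
$$w\longmapsto \|w\|_{H^1(U)}^2-\langle L\Tr^e_{\bord}w,\Tr^e_{\bord}w\rangle_{\Hp},$$
which is precisely where the standing assumption~\eqref{Eq:Coercive-condition} on $L\in\mathscr L_k$ enters.

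To check this coercivity, under the first alternative of~\eqref{Eq:Coercive-condition} the upper trace bound in~\eqref{Eq:Trace-Estimates-Vk} gives $|\langle L\Tr^e_{\bord}w,\Tr^e_{\bord}w\rangle_{\Hp}|\le\|L\|_{\mathcal L(\Hp)}\|\mathrm E^\Omega_{\mathrm i}\|^2\|w\|_{H^1(U)}^2<\|w\|_{H^1(U)}^2$, so the form is positive-coercive; under the second alternative, $\inf_{\|w\|_{H^1(U)}=1}\mathrm{Re}\langle L\Tr^e_{\bord}w,\Tr^e_{\bord}w\rangle>1$ yields, by homogeneity, $\mathrm{Re}\langle L\Tr^e_{\bord}w,\Tr^e_{\bord}w\rangle\ge(1+\varepsilon)\|w\|_{H^1(U)}^2$ for some $\varepsilon>0$, so the form is bounded above by $-\varepsilon\|w\|_{H^1(U)}^2$, i.e. coercive up to sign. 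In either case the coercive part is an isomorphism, so $A$ is Fredholm of index zero; combined with the injectivity above this gives bijectivity, and continuity of $A^{-1}$ follows from the open mapping theorem together with the continuity of the exterior Dirichlet extension at $\partial B$ (as in Subsections~\ref{Subsubsec:SLP} and~\ref{Subsubsec:DLP}). I expect the main obstacle to be the bookkeeping of the decomposition rather than any single estimate: one must correctly accommodate the non-compactness of $\Tr^e_{\bord}$ by keeping the impedance term in the coercive part and justify the reduction to $V_k(U)$, so that~\eqref{Eq:Coercive-condition} is exactly what renders the coercive part invertible.
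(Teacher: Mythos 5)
Your proposal is correct and follows essentially the same route as the paper: uniqueness from Proposition~\ref{Prop:Impedance-Uniqueness}, the truncated variational formulation on $U=B\backslash\overline\Omega$, a Fredholm decomposition that keeps the impedance term $\langle L\Tr^e_{\bord}u,\Tr^e_{\bord}v\rangle_{\Hp}$ in the coercive part (precisely because $\Tr^e_{\bord}$ is not compact), coercivity up to sign on $V_k(U)$ from~\eqref{Eq:Coercive-condition} via the trace estimates~\eqref{Eq:Trace-Estimates-Vk}, and conclusion by Fredholm's alternative. You in fact spell out the two coercivity verifications that the paper leaves implicit, so no further comparison is needed.
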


Seeking the solution to~\eqref{Eq:Rob-Prob} in the form of a single or a double layer potential, the well-posedness of~\eqref{Eq:Rob-Prob} implies that of the following boundary equations, in the spirit of~\cite{colton_inverse_2013,brakhage_uber_1965,chandler-wilde_sobolev_2017} for instance.

\begin{theorem}\label{Th:Boundary-Integral-Equation}
Let $\Omega$ be a two-sided admissible domain of $\R^n$, let $k>0$ and $L\in\mathscr{L}_k$.
\begin{enumerate}
\item[(i)] If $k^2$ is not an interior Dirichlet eigenvalue of $-\Delta$, then, for all $h\in\Hm$, the boundary equation
\begin{equation*}
\left(-\frac12I+\Kcal_k^*+\Lambda_{\mathrm i}L\Vcal_k\right)g=h,
\end{equation*}
has a unique solution $g\in\Hm$, where $\Lambda_{\mathrm i}$ is defined by~\eqref{Eq:PS-i};
\item[(ii)] If $k^2$ is not an interior Neumann eigenvalue of $-\Delta$, then, for all $h\in\Hm$, the boundary equation
\begin{equation*}
\left(-\Wcal_k+\Lambda_{\mathrm i}L\left(\frac12I+\Kcal_k\right)\right)f=h,
\end{equation*}
has a unique solution $f\in\Hp$.
\end{enumerate}
\end{theorem}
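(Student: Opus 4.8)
The strategy is to seek the solution of the exterior Robin problem~\eqref{Eq:Rob-Prob} as a layer potential, and to convert its well-posedness (Proposition~\ref{Prop:Rob-WP}) into the unique solvability of the corresponding boundary equation. The first step is to express the intrinsic impedance condition in a single space. Since $\ddny{u}{e}\in\Hm$ while $L\Tr^e_{\bord}u\in\Hp$, I would invoke the Riesz isometry $\Lambda_{\mathrm i}\colon\Hp\to\Hm$ from~\eqref{Eq:PS-i}, which by definition satisfies $\langle\Lambda_{\mathrm i}\varphi,f\rangle_{\Hm,\Hp}=\langle\varphi,f\rangle_{\Hp}$, to recast the weak condition~\eqref{Eq:BC-Rob} as
\begin{equation*}
\ddny{u}{e}+\Lambda_{\mathrm i}L\Tr^e_{\bord}u=h\qquad\text{in }\Hm.
\end{equation*}
This reconciliation is precisely what renders the measure-free impedance condition compatible with the layer-potential calculus.

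Next I would insert the appropriate layer-potential ansatz and use the boundary-value identities from Subsection~\ref{Subsec:NP-Op}. For~(i), substituting $u=\Scal_k g$ and using $\ddny{}{e}\circ\Scal_k=-\frac12 I+\Kcal_k^*$ together with $\Tr^e\circ\Scal_k=\Vcal_k$ yields exactly the operator $-\frac12+\Kcal_k^*+\Lambda_{\mathrm i}L\Vcal_k$. For~(ii), substituting $u=\Dcal_k f$ and using $\ddny{}{e}\circ\Dcal_k=-\Wcal_k$ (valid because the double layer carries no jump in normal derivative) together with $\Tr^e\circ\Dcal_k=\frac12 I+\Kcal_k$ produces the operator $-\Wcal_k+\Lambda_{\mathrm i}L(\frac12 I+\Kcal_k)$. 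By construction, $g$ (resp.\ $f$) solves the boundary equation if and only if $\Scal_k g$ (resp.\ $\Dcal_k f$) solves~\eqref{Eq:Rob-Prob} with datum $h$.

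The crux, which I expect to be the main obstacle, is to show these operators are bijective. The argument parallels Proposition~\ref{Prop:IE-D-N}: one proves that $g\mapsto\Scal_k g|_{\overline{\Omega}^c}$ (resp.\ $f\mapsto\Dcal_k f|_{\overline{\Omega}^c}$) is a bijection from $\Hm$ (resp.\ $\Hp$) onto the space of exterior Helmholtz solutions satisfying the SRC, and here the eigenvalue hypothesis is indispensable. For surjectivity, a given exterior solution is extended to an interior one sharing the same trace (resp.\ normal derivative) by solving the interior Dirichlet (resp.\ Neumann) problem, uniquely solvable precisely because $k^2$ is not an interior Dirichlet (resp.\ Neumann) eigenvalue; the resulting pair is then a genuine single (resp.\ double) layer potential with the prescribed jumps. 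For injectivity, if the layer potential vanishes on $\overline{\Omega}^c$, its interior part solves the homogeneous interior Dirichlet (resp.\ Neumann) problem and hence vanishes under the same hypothesis, so the jump relations force the density to be zero.

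With this bijection in hand the conclusion follows by composition: solving the boundary equation amounts to finding the unique density whose layer potential coincides with the unique Robin solution $u_h$ furnished by Proposition~\ref{Prop:Rob-WP}. Since $h\mapsto u_h$ is a bijection---its injectivity resting on the dissipativity of $-\mathrm i\bar k L$ through Proposition~\ref{Prop:Impedance-Uniqueness}---and the layer-potential map is a bijection onto the exterior solution space, their composition is a bijection and the boundary operator is invertible. The delicate point is that the eigenvalue conditions play a twofold role, guaranteeing both the representability of exterior solutions (surjectivity) and the vanishing of interior parts (injectivity); because the trace operator fails to be compact in this intrinsic setting, it is this representation argument, rather than a direct Fredholm perturbation at the boundary, that underlies the result.
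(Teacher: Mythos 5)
Your proposal is correct and follows essentially the same route as the paper's proof: recast the intrinsic Robin condition via $\Lambda_{\mathrm i}$, represent the unique solution of the well-posed exterior impedance problem (Proposition~\ref{Prop:Rob-WP}) as a single (resp.\ double) layer potential using the eigenvalue hypothesis for unique representability, and observe that densities solving the boundary equation correspond exactly to layer potentials solving~\eqref{Eq:Rob-Prob}. Your write-up merely makes explicit two steps the paper leaves implicit --- the jump-relation computation identifying the boundary operator and the bijectivity of the density-to-exterior-solution map (argued as in Proposition~\ref{Prop:IE-D-N}) --- so there is no substantive difference.
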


\begin{proof}
Let us prove Point (i).
Let $h\in\Hm$.
Since~\eqref{Eq:Rob-Prob} is well-posed, there exists a unique solution $u\in H^1_{\mathrm{loc_b}}(\overline{\Omega}^c)$ associated to $h$.
By hypothesis on $k^2$, there exists a unique $g\in\Hm$ such that $u=\Scal_k g|_{\overline\Omega^c}$.
By definition of the operators involved, $g$ solves the boundary equation.
If $\varphi$ solves the boundary equation, then $\Scal_k\varphi|_{\overline\Omega^c}$ solves the impedance problem~\eqref{Eq:Rob-Prob}, and is therefore equal to $u$, hence $\varphi=g$.
The proof of Point (ii) is similar, considering a double layer potential instead.
\end{proof}

The assumptions on the impedance to ensure the well-posedness of~\eqref{Eq:Rob-Prob} in Proposition~\ref{Prop:Rob-WP}, namely that $L\in\mathscr{L}_k$, differ from the hypotheses in the classical case.
Indeed, it is well-known that when the boundary is smooth (or at least Lipschitz) and endowed with the surface measure $\sigma$, the exterior Helmholtz problem with boundary condition $\ddny ue+\lambda\Tr^eu=h$ for $\lambda\in L^\infty(\bord,\sigma)$ and $h\in L^2(\bord,\sigma)$ is well-posed on $H^1_{\locb}(\overline{\Omega}^c)$ anytime $\mathrm{Im}(\bar k\lambda)\ge0$, see for instance~\cite[§~3.7]{colton_integral_2013}.
This is due to the fact that the boundary condition is not understood in the same space: in our framework, it is understood in the space $\Hm$, that is, in the sense of~\eqref{Eq:BC-Rob}.
In the classical case however, the boundary condition is understood in the sense of $L^2(\bord,\sigma)$, that is, in terms of boundary integrals:
\begin{equation*}
\forall f\in L^2(\bord,\sigma),\quad \int_{\bord}\ddny ue\bar f\,\dsig+\int_{\bord}\lambda\Tr^e u\bar f\,\dsig=\int_{\bord}h\bar f\,\dsig.
\end{equation*}
Bearing in mind the embedding $L^2(\bord,\sigma)\hookrightarrow\Hm$, in the sense of~\eqref{Eq:Incl-L2-B'}, this condition can be rewritten as
\begin{equation*}
\forall f\in\Hp,\quad \left\langle\ddny ue\Big|_{\bord},f\right\rangle_{\Bcal'\!,\,\Bcal(\bord)}+\left\langle L\Tr^e_{\bord}u,f\right\rangle_{L^2(\bord,\sigma)}=\langle h,f\rangle_{\Bcal'\!,\,\Bcal(\bord)}.
\end{equation*}
While the terms involving duality pairings between $\Hp$ and $\Hm$ (that is, up to equivalent norms, $H^{\frac 12}(\bord,\sigma)$ and $H^{-\frac 12}(\bord,\sigma)$ in this case~\cite[Chapter IV, Appendix]{dautray_mathematical_1988-2}) reduce to a standard inner product on $L^2(\bord,\sigma)$, the inner product on $\Hp$ should be written in terms of an inner product on $H^{\frac 12}(\bord,\sigma)$ instead.
For that matter, understanding the Robin boundary condition in terms of~\eqref{Eq:BC-Rob}, which is natural in the absence of a boundary measure, is not a strict generalisation of the Robin problem as it is usually understood, in the sense of $L^2(\bord,\sigma)$.

In light of this observation, one may question the relevance of setting the impedance problem in our framework, rather than the usual one.
As it was mentioned before, understanding the boundary condition in the sense of~\eqref{Eq:BC-Rob} is natural in the framework of extension domains for there is no specified measure on the boundary, hence no boundary $L^2$ space.
To consider the impedance problem in the usual $L^2$-sense means further assuming the boundary of the domain is endowed with a `suitable' measure (so that $\Hp\hookrightarrow L^2(\bord,\mu)$ be compact), while -- to the best of our knowledge -- the impact the existence of such a measure has on the boundary regularity is not known.
Nonetheless, we may wonder if our framework is physically relevant for the underlying problem.
To that end, consider the case of a constant impedance $L=\lambda\in\C$.
Then, the homogeneous problem is uniquely solved anytime $\mathrm{Im}(\bar k\lambda)\ge0$.

In addition, the problematic boundary term can be expressed as
\begin{equation*}
\langle L\Tr^e_{\bord}u,\Tr^e_{\bord}u\rangle_{\Hp}=\lambda\|\Tr^e_{\bord}u\|_{\Hp}^2,
\end{equation*}
so that by~\eqref{Eq:Trace-Estimates-Vk}, the operator $A$ from~\eqref{Eq:Rob_Var_Op} is Fredholm (of index $0$) on $V_k(U)$ (see~\ref{Eq:V_k}) anytime
\begin{equation}\label{Eq:Impedance_Constant}
\mathrm{Re}(\lambda)<\|\mathrm E_\Omega\|^{-2}\qquad\mbox{or}\qquad c(\Omega,k)^{-2}\left(1-\mathrm{Re}\left(\int_{\partial B}\tilde\Lambda_{\partial B}(\Tr_{\partial B}w\right)\right)<\mathrm{Re}(\lambda).
\end{equation}
However, given our conventions, that last condition is always satisfied by physical systems.
Indeed, consider a wave $w:(t,x)\mapsto u(x)e^{-\mathrm i\omega t}$ satisfying
\begin{equation*}
\begin{cases}
\partial_t^2w-c^2\Delta w=0 &\mbox{on }]0,+\infty[\times\overline{\Omega}^c,\\
\ddny we -\frac1\omega\mathrm{Im}(\lambda)\partial_tw + \mathrm{Re}(\lambda)w=he^{-\mathrm i\omega t} &\mbox{on }]0,+\infty[\times\bord,\\
w(0,\cdot)=w_0\quad\mbox{and}\quad \partial_t w(0,\cdot)=v_0 &\mbox{on }\overline{\Omega}^c,
\end{cases}
\end{equation*}
for some smooth initial conditions $(w_0,v_0)$, where $c>0$ is the wave velocity and $\omega>0$ is its angular frequency, so that $k=\frac\omega c>0$ and the problem is compatible with~\eqref{Eq:Rob-Prob} upon eliminating the time-dependency~\cite{bardos_variational_1994}.
Then, formally (multiplying by $\partial_tw$ and integrating over $U\coloneq\overline{\Omega}^c\cap B$ with $B$ a large ball), the time variation of the energy of the system is governed by
\begin{multline*}
\frac{\mathrm d}{\mathrm dt}\left(\frac12\int_U\left(|\partial_tw|^2+c^2|\nabla w|^2\right)\,\dx-\frac {c^2}2\mathrm{Re}(\lambda)\|\Tr^ew\|_{\Hp}^2\right)=\Big\langle\ddny wi,\Tr^i(\partial_t w)\Big\rangle_{\Bcal'\!,\,\Bcal(\partial B)}\\
-c^2\left(\langle he^{-\mathrm{i}\omega t},\Tr^e(\partial_t w)\rangle_{\Bcal'\!,\,\Hp}-\frac1\omega\mathrm{Im}(\lambda)\|\Tr^e(\partial_tw)\|_{\Hp}^2\right).
\end{multline*}
In particular, the additional energy term generated at $\bord$, $-\frac{c^2}2\mathrm{Re}(\lambda)\|\Tr^ew\|_{\Hp}^2$, corresponds to the energy reflection at the boundary, so that physically, $\mathrm{Re}(\lambda)\le0$, thus $\mathrm{Re}(\lambda)<\|\mathrm E_\Omega\|^{-2}$ and~\eqref{Eq:Impedance_Constant} holds.

\begin{remark}
The sign difference between our condition $\mathrm{Re}(\lambda)\le0$ and the classical case~\cite{bardos_variational_1994} is due to our sign convention in the definition of the exterior normal derivative (Theorem~\ref{Th:Green}).
\end{remark}

\subsection{Impedance optimisation}\label{Subsec:Control}

The analysis carried out on the impedance problem~\eqref{Eq:Rob-Prob} and its connection to the layer potential operators (Theorem~\ref{Th:Boundary-Integral-Equation}) allow to generalise results found in~\cite[Chapter 9]{colton_integral_2013} regarding the optimisation of a planar scattered wave (see also~\cite{kirsch_optimal_1981,angell_generalized_1982}) to the case of a two-sided admissible obstacle in an $n$-dimensional setting.
Let $\Omega$ be a two-sided admissible domain of $\R^n$ and let $k>0$ be such that $k^2$ is not an interior Dirichlet eigenvalue for $-\Delta$.
Consider an incident field $u^i\in H^1_{\mathrm{loc_b}}(\overline{\Omega}^c)$ such that $(\Delta+k^2)u^i=0$ on $\overline{\Omega}^c$.
Given an impedance $L\in\mathscr{L}_k$, assume this wave is scattered by the obstacle $\Omega$, thus generating a scattered field $u_L\in H^1_{\mathrm{loc_b}}(\overline{\Omega}^c)$ such that the total field $u^t_L\coloneq u^i+u_L$ satisfies
\begin{equation*}
\begin{cases}
(\Delta+k^2)u^t_L=0 &\mbox{on }\overline{\Omega}^c,\\
\displaystyle\ddny {u^t_L}e\Big|_{\bord}+L\Tr^e_{\bord}u^t_L=0.
\end{cases}
\end{equation*}
Stemming from the obstacle, the scattered field satisfies the $\mathrm{SRC}$~\eqref{Eq:SRC}, hence it solves
\begin{equation*}
\begin{cases}
(\Delta+k^2)u_L=0 &\mbox{on }\overline{\Omega}^c,\\
\displaystyle\ddny {u_L}e\Big|_{\bord}+L\Tr^e_{\bord}u_L=-\ddny {u^i}e\Big|_{\bord}-L\Tr^e_{\bord}u^i,\\
\mathrm{\mathrm{SRC}}.
\end{cases}
\end{equation*}
Given a measurable set $\Theta$ of the Euclidean unit sphere $\mathbb{S}^{n-1}$, the far-field power of the scattered field through $\Theta$ can be expressed as
\begin{equation*}
 Q_\Theta(L)\coloneq\lim_{r\to\infty}\int_{\mathbb S^{n-1}}\mathds{1}_\Theta(\hat x)|u_L(r\hat x)|^2\,\sigma(\mathrm d\hat x),
\end{equation*}
where $\sigma$ is the surface measure on the unit sphere $\mathbb S^{n-1}$. The setting is represented in Figure~\ref{Fig:Optim}.

\begin{figure}[ht]
\centering
\begin{tikzpicture}
    % Obstacle
\draw (4,0) node{\includegraphics[scale=0.7]{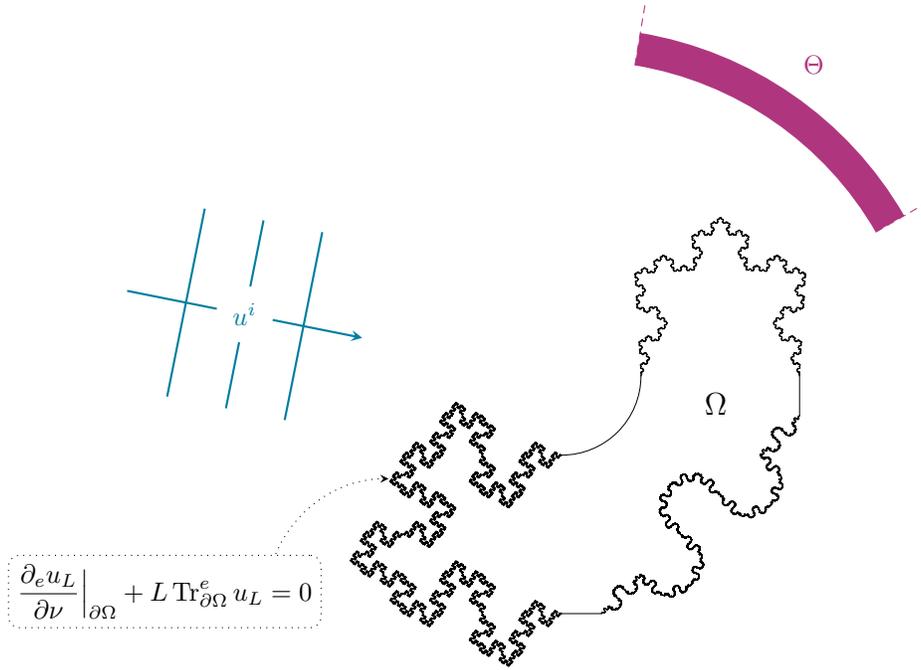}};
\draw (5.825,0.5) node {\large $\Omega$} ;
    % Boundary condition
\draw (-1.5,-2) node [rectangle, draw, dotted, rounded corners=0.15cm] {$\displaystyle\ddny {u^t_L}e\Big|_{\bord}+L\Tr^e_{\bord}u^t_L=0$} ;
\draw[->, >=stealth, dotted] (-0.025,-1.5) arc (157.38:90:1.625) ;
    % Incident wave
\draw[PineGreen!40!RoyalBlue, thick] (-2+3.125*0.5-1*0.25, 2-0.625*0.5-5*0.25) -- ++(0.25*2,5*0.25*2) ;
\draw[PineGreen!40!RoyalBlue, thick] (-2+3.125*0.75-1*0.25, 2-0.625*0.75-5*0.25) -- ++(0.25*2,5*0.25*2) ;
\draw[PineGreen!40!RoyalBlue, thick] (-2+3.125*0.25-1*0.25, 2-0.625*0.25-5*0.25) -- ++(0.25*2,5*0.25*2) ;
\draw[->, >=stealth, PineGreen!40!RoyalBlue, thick] (-2,2) -- ++(3.125,-0.625) node[midway, circle, fill=white]{$u^i$} ;
    % Target
\draw[SeaGreen!80!black, densely dashed] (4.75,5) -- (4.894,5.864) ;
\draw[SeaGreen!80!black, densely dashed] (7.951,2.781) -- (8.709,3.219) ;
\fill[SeaGreen!80!black] (4.75,5) arc (80.538:30:4.562) -- (8.33,3) arc (30:80.538:5) ;
\draw[SeaGreen!80!black] (7.12715,5.01085) node {$\Theta$} ;
\end{tikzpicture}
\caption{Illustration of the setting of the optimisation problem.}
\label{Fig:Optim}
\end{figure}
The optimisation problem consists in finding an impedance $L^*\in\mathscr{L}_k^0$ which maximises the far-field power through $\Theta$ over a certain range of impedances $\mathscr{L}_k^0\subset\mathscr{L}_k$.

To gain a better understanding of the quantity $ Q_\Theta(L)$ for $L\in\mathscr{L}_k$, we derive the asymptotic expansion of $u_L(x)$ as $|x|\to\infty$.
By hypothesis on $k$ (see also Theorem~\ref{Th:Boundary-Integral-Equation}), $u_L$ can be thought of as a single layer potential.
Thus, Proposition~\ref{Prop:SLP-Repr} (see also~\eqref{Eq:Link_Sk_Gk}) allows to write the scattered wave as
\begin{equation}\label{Eq:Scattered_field}
u_L(x)=\left\langle\Big\llbracket\ddn{u_L}\Big\rrbracket,\Tr_{\bord}\overline{G_k(x-\cdot)}\right\rangle_{\Bcal',\Hp}.
\end{equation}
For $y\in\bord$, denoting $x=|x|\hat x\in\R^n\backslash\{0\}$, the asymptotic expansion
\begin{equation*}
|x-y|=|x|-\hat x\cdot y+\underset{|x|\to\infty}{O}(|x|^{-1})
\end{equation*}
as well as that of the Hankel function~\cite[Eq. (10.17.5)]{nist_digital_nodate}
\begin{equation*}
H^{(1)}_{\frac n2-1}(z)=\sqrt{\frac2\pi}z^{-\frac12}e^{\mathrm i(z-(n-1)\frac\pi4)}(1+\underset{z\to\infty}{O}(z^{-1}))
\end{equation*}
allow to expand the Newtonian kernel~\eqref{Eq:Hankel} as
\begin{equation}\label{Eq:Newt-Asymptotic}
G_k(x-y)=\frac{e^{\mathrm ik|x|}}{|x|^{\frac{n-1}2}}F\left(\hat x,y\right)+\underset{|x|\to\infty}{O}(|x|^{-\frac{n+1}2}),
\end{equation}
where
\begin{equation*}
F\left(\hat x,y\right)\coloneq\frac 1{4\pi}e^{-\mathrm i(n-3)\frac\pi4}\left(\frac k{2\pi}\right)^{\frac{n-3}2}\exp\big(-\mathrm ik\hat x\cdot y\big),
\end{equation*}
is the far-field pattern of the kernel.
Let $K\subset\R^n$ be a compact set such that $\bord\subset \mathring K$ and let $B$ be a large open ball such that $K\subset B$.
Then, there exists $C>0$ such that the remainder in~\eqref{Eq:Newt-Asymptotic} can be expressed as
\begin{equation}\label{Eq:Remainder}
\underset{|x|\to\infty}{O}(|x|^{-\frac{n+1}2})=\varepsilon(x,y)|x|^{-\frac{n+1}2},
\end{equation}
where
\begin{equation*}
\forall x\in \overline{B}^c,\;\forall y\in K,\quad |\varepsilon(x,y)|\le C.
\end{equation*}
In the classical case,~\eqref{Eq:Scattered_field} can be written in terms of a boundary integral so that the asymptotic expansion of the solution $u_L$ can be directly derived from that of the kernel $G_k$.
Working on the trace space directly, that derivation is less straightforward.
Since~\eqref{Eq:Scattered_field} involves the kernel $G_k$ as an element of $H^1$ near $\bord$, we need a similar bound on the gradient of $\varepsilon$ to derive the asymptotic expansion of the scattered field $u_L$.

\begin{lemma}
Let $\Omega$ be a two-sided admissible domain of $\R^n$ and let $k>0$. It holds
\begin{equation*}
u_L(x)=\frac{e^{\mathrm ik|x|}}{|x|^{\frac{n-1}2}}\left\langle\Big\llbracket\ddn{u_L}\Big\rrbracket,\Tr \left(F\left({\hat x},\cdot\right)\right)\right\rangle_{\Bcal',\Hp}+\underset{|x|\to\infty}{O}(|x|^{-\frac{n+1}2}).
\end{equation*}
\end{lemma}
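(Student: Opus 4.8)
The plan is to substitute the pointwise kernel expansion~\eqref{Eq:Newt-Asymptotic} into the single layer representation of the scattered field. Writing the remainder as
\begin{equation*}
R(x,y)\coloneq G_k(x-y)-\frac{e^{\mathrm ik|x|}}{|x|^{\frac{n-1}2}}F(\hat x,y)=\varepsilon(x,y)|x|^{-\frac{n+1}2},
\end{equation*}
the linearity of the trace operator and of the duality pairing $\langle\cdot,\cdot\rangle_{\Bcal',\Hp}$ (antilinear in its second slot, as dictated by Green's formula~\eqref{Eq:Green-ext}) split the representation of $u^s_L(x)$ into the announced leading term
\begin{equation*}
\frac{e^{\mathrm ik|x|}}{|x|^{\frac{n-1}2}}\left\langle\Big\llbracket\ddn{u^s_L}\Big\rrbracket,\Tr\big(F(\hat x,\cdot)\big)\right\rangle_{\Bcal',\Hp}
\end{equation*}
plus the remainder pairing $\big\langle\llbracket\ddn{u^s_L}\rrbracket,\Tr\,\overline{R(x,\cdot)}\big\rangle_{\Bcal',\Hp}$, the scalar prefactors and conjugations being carried through consistently by the sesquilinearity. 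It then suffices to show this last pairing is $\underset{|x|\to\infty}{O}(|x|^{-\frac{n+1}2})$.

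For this, I would first apply the Cauchy--Schwarz inequality,
\begin{equation*}
\left|\left\langle\Big\llbracket\ddn{u^s_L}\Big\rrbracket,\Tr\,\overline{R(x,\cdot)}\right\rangle_{\Bcal',\Hp}\right|\le\left\|\Big\llbracket\ddn{u^s_L}\Big\rrbracket\right\|_{\Hm}\big\|\Tr\,\overline{R(x,\cdot)}\big\|_{\Hp},
\end{equation*}
whose first factor is a fixed constant independent of $x$. For the second factor, note that for $x\in\overline B^c$ the function $R(x,\cdot)$ is smooth on $K$ (since $x\notin K$) and has no jump across $\bord$; since the trace is local, a fixed cutoff $\chi\in\mathscr D(\mathring K)$ equal to $1$ near $\bord$ together with the trace theorem (Theorem~\ref{Th:Trace}, operator norm one) gives $\big\|\Tr\,\overline{R(x,\cdot)}\big\|_{\Hp}\lesssim\|R(x,\cdot)\|_{H^1(K)}$. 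The $L^2$ part is immediate from the established pointwise bound $|\varepsilon(x,y)|\le C$ on $\overline B^c\times K$, which yields $\|R(x,\cdot)\|_{L^2(K)}=\underset{|x|\to\infty}{O}(|x|^{-\frac{n+1}2})$.

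The main obstacle is the gradient part $\|\nabla_yR(x,\cdot)\|_{L^2(K)}$, requiring the uniform bound $|\nabla_y\varepsilon(x,y)|\le C'$ on $\overline B^c\times K$ anticipated in the text; this cannot be deduced from the pointwise estimate alone. I would obtain it by differentiating the kernel expansion directly, using $\nabla_yG_k(x-y)=-(\nabla G_k)(x-y)$, the chain rule through $\nabla_y|x-y|$ and $\nabla_y(\hat x\cdot y)=\hat x$, and the asymptotic expansion of the derivative $(H^{(1)}_{\frac n2-1})'$ of the Hankel function, which holds to the same order and may be differentiated term by term. Since $\nabla_yF(\hat x,y)=-\mathrm ik\,\hat x\,F(\hat x,y)$ stays bounded uniformly in $\hat x\in\mathbb{S}^{n-1}$ and $y\in K$, subtracting the differentiated leading term leaves a remainder of order $|x|^{-\frac{n+1}2}$, so that $\|\nabla_yR(x,\cdot)\|_{L^2(K)}=\underset{|x|\to\infty}{O}(|x|^{-\frac{n+1}2})$. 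Combining the two parts gives $\|R(x,\cdot)\|_{H^1(K)}=\underset{|x|\to\infty}{O}(|x|^{-\frac{n+1}2})$, and hence the claimed expansion.
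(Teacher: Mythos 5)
Your proposal is correct and takes essentially the same route as the paper's proof: substitute the kernel expansion into the single-layer representation, bound the remainder pairing by duality against the $H^1(K)$-norm of the cutoff remainder, and obtain the crucial uniform bound on $\nabla_y$ of the remainder by differentiating the Hankel asymptotics in $y$ so that the leading terms cancel. The only differences are cosmetic: you work with $R(x,y)=\varepsilon(x,y)|x|^{-\frac{n+1}2}$ instead of the rescaled $\varepsilon$ itself, and you track the conjugation through the sesquilinear pairing more explicitly than the paper does.
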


\begin{proof}
Let us estimate the gradient of the remainder~\eqref{Eq:Remainder}.
It holds
\begin{equation*}
\nabla_x\left(|x|e^{\mathrm ik|x|}F\left({\hat x},y\right)\right)=\mathrm ikxe^{\mathrm ik|x|}F\left({\hat x},y\right)+\underset{|x|\to\infty}{O}(1),
\end{equation*}
while
\begin{equation*}
\nabla_y\left(|x|e^{\mathrm ik|x|}F\left({\hat x},y\right)\right)=-\mathrm ikxe^{\mathrm ik|x|}F\left({\hat x},y\right)+\underset{|x|\to\infty}{O}(1).
\end{equation*}
From the asymptotic expansion of the derivative of the Hankel function $H^{(1)}_0$
\begin{equation*}
(H^{(1)}_0)'(z)=\sqrt{\frac 2{\pi z}}e^{\mathrm i(z+\frac\pi4)}+\underset{z\to\infty}{O}(z^{-\frac32}),
\end{equation*}
and noticing that
\begin{equation*}
\frac{x-y}{|x-y|}=\hat x+\underset{|x|\to\infty}{O}(|x|^{-1}),
\end{equation*}
we can deduce
\begin{equation*}
\nabla_x(|x|^{\frac{n+1}2}G_k(x-y))=\mathrm{i}kxe^{\mathrm ik|x|}F\left({\hat x},y\right)+\underset{|x|\to\infty}{O}(1),
\end{equation*}
and
\begin{equation*}
\nabla_y(|x|^{\frac{n+1}2}G_k(x-y))=-\mathrm{i}kxe^{\mathrm ik|x|}F\left({\hat x},y\right)+\underset{|x|\to\infty}{O}(1).
\end{equation*}
Altogether, we can deduce $|\nabla\varepsilon(x,y)|=\underset{|x|\to\infty}{O}(1)$, and there exists $C>0$ such that
\begin{equation*}
\forall x\in \overline{B}^c,\;\forall y\in K,\quad |\nabla\varepsilon(x,y)|\le C.
\end{equation*}
If follows that, for $\chi\in\mathscr{D}(\R^n)$ such that $\chi=1$ on a neighbourhood of $\bord$, and $\operatorname{supp}\chi\subset K$,
\begin{equation*}
\left\langle\Big\llbracket\ddn{u_L}\Big\rrbracket,\Tr\varepsilon(x,\cdot)\right\rangle\le\left\|\Big\llbracket\ddn{u_L}\Big\rrbracket\right\|_{\Hm}\|\varepsilon(x,\cdot)\chi\|_{H^1(K)}=\underset{r\to\infty}{O}(1),
\end{equation*}
and the result follows.
\end{proof}

Given the asymptotic expansion of the scattered field $u_L$, the far-field power through $\Theta$ can be rewritten as
\begin{equation*}
Q_\Theta(L)=\int_{\Theta}\bigg|\left\langle\Big\llbracket\ddn{u_L}\Big\rrbracket,\Tr (F(\theta,\cdot))\right\rangle_{\Bcal',\Hp}\bigg|^2\,\dth.
\end{equation*}

We prove the continuity of the far-field power $Q_\Theta$.
To do so, we recall the following standard result on the convergence of inverse operators.

\begin{lemma}\label{Lem:CV-inverse}
Let $B$ be a Banach space, and let $T\in\mathcal{L}(B)$ and $(T_m)_{m\in\N}\in\mathcal{L}(B)^\N$ be invertible operators.
If $T_m\to T$, then $T_m^{-1}\to T^{-1}$ in $\mathcal{L}(B)$ as well.
\end{lemma}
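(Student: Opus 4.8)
The plan is to rely on the elementary resolvent-type identity together with a Neumann series argument controlling the norms of the inverses uniformly in $m$. First I would write, for each $m$ for which $T_m$ is invertible,
$$T_m^{-1}-T^{-1}=T_m^{-1}(T-T_m)T^{-1},$$
which follows at once from inserting $T_m^{-1}T_m=I$ and $T^{-1}T=I$. Taking operator norms gives
$$\|T_m^{-1}-T^{-1}\|\le\|T_m^{-1}\|\,\|T-T_m\|\,\|T^{-1}\|,$$
so that, since $\|T-T_m\|\to0$ by hypothesis, the whole statement reduces to showing that $\|T_m^{-1}\|$ remains bounded as $m\to\infty$.

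The key step, and the only point requiring care, is therefore this uniform bound on $\|T_m^{-1}\|$. To obtain it I would factor $T_m=T\bigl(I+T^{-1}(T_m-T)\bigr)$. Since $T_m\to T$, there exists $m_0\in\N$ such that $\|T^{-1}\|\,\|T_m-T\|<1$ for all $m\ge m_0$; for such $m$ the operator $I+T^{-1}(T_m-T)$ is invertible by the Neumann series, with
$$\bigl\|(I+T^{-1}(T_m-T))^{-1}\bigr\|\le\frac{1}{1-\|T^{-1}\|\,\|T_m-T\|}.$$
Consequently $T_m^{-1}=\bigl(I+T^{-1}(T_m-T)\bigr)^{-1}T^{-1}$, whence
$$\|T_m^{-1}\|\le\frac{\|T^{-1}\|}{1-\|T^{-1}\|\,\|T_m-T\|},$$
so in particular $\limsup_{m\to\infty}\|T_m^{-1}\|\le\|T^{-1}\|$ and $\sup_{m\ge m_0}\|T_m^{-1}\|<\infty$.

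Combining the two estimates yields, for $m\ge m_0$,
$$\|T_m^{-1}-T^{-1}\|\le\frac{\|T^{-1}\|^2\,\|T-T_m\|}{1-\|T^{-1}\|\,\|T_m-T\|},$$
whose right-hand side tends to $0$ as $m\to\infty$, giving $T_m^{-1}\to T^{-1}$ in $\mathcal{L}(B)$. I expect the Neumann series bound to be the main (indeed essentially the only) substantive step; the algebraic identity is immediate, and everything hinges on preventing the norms $\|T_m^{-1}\|$ from blowing up. I would also remark in passing that the standing hypothesis that each $T_m$ be invertible is automatically satisfied once $m\ge m_0$, by the same factorisation, so only the tail behaviour genuinely matters for the convergence.
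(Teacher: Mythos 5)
Your proof is correct, and its skeleton coincides with the paper's: both reduce the statement, via the identity $T_m^{-1}-T^{-1}=T_m^{-1}(T-T_m)T^{-1}$, to a uniform bound on $\|T_m^{-1}\|$. Where you differ is the mechanism for that key bound. The paper argues by a perturbed lower bound: from $\|x\|\le\|T^{-1}\|\,\|Tx\|$ it gets $c>0$ with $\|Tx\|\ge c\|x\|$, hence $\|T_mx\|\ge\|Tx\|-\|(T-T_m)x\|\ge\tfrac c2\|x\|$ for large $m$, which yields $\|T_m^{-1}\|\le 2/c$ \emph{using} the assumed invertibility of each $T_m$ (the lower bound alone gives injectivity and a bound on the inverse where it exists, not surjectivity). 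You instead factor $T_m=T\bigl(I+T^{-1}(T_m-T)\bigr)$ and invoke the Neumann series, which both bounds $\|T_m^{-1}\|$ and \emph{re-proves} the invertibility of $T_m$ for $m\ge m_0$, so the standing invertibility hypothesis is only genuinely needed for finitely many terms (indeed for none, as far as the limit is concerned) — a point you correctly flag. Your route also gives the sharper quantitative output $\limsup_m\|T_m^{-1}\|\le\|T^{-1}\|$ and an explicit rate; the paper's route is marginally shorter and avoids series. Both are complete proofs.
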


\begin{proof}
Since $B$ is a Banach space and $T\in\mathcal{L}(B)$, it holds $T^{-1}\in\mathcal{L}(B)$ and there exists $c>0$ such that
\begin{equation*}
c\|x\|_B\le \|Tx\|_B,\quad x\in B.
\end{equation*}
For $m\in\N$ large enough, $\|T-T_m\|_{\mathcal{L}(B)}\le \frac c2$, so that for $x\in B$,
\begin{equation*}
\|T_mx\|_B\ge \|Tx\|_B-\|(T-T_m)x\|_B\ge\frac c2 \|x\|_B.
\end{equation*}
It follows that $(T_m^{-1})_{m\in\N}$ is bounded in $\mathcal{L}(B)$, hence
\begin{equation*}
\|T^{-1}-T_m^{-1}\|_{\mathcal{L}(B)}\le\|T_m^{-1}\|_{\mathcal{L}(B)}\|T_m-T\|_{\mathcal{L}(B)}\|T^{-1}\|_{\mathcal{L}(B)}\xrightarrow[m\to\infty]{}0.\qedhere
\end{equation*}
\end{proof}

\begin{theorem}
Let $\Omega$ be a two-sided admissible domain of $\R^n$ and let $k>0$ be such that $k^2$ be outside the Dirichlet spectrum of $-\Delta$ on $\Omega$.
Let $\Theta$ be a measurable subset of $\mathbb S^{n-1}$.
Then the far-field power through $\Theta$, $Q_\Theta:\mathscr{L}_k\to\R$, is (sequentially) continuous.
\end{theorem}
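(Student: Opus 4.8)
The plan is to reduce everything to the continuous dependence of the normal-derivative jump $g_L\coloneq\big\llbracket\ddn{u^s_L}\big\rrbracket$ on the impedance $L$, and then to pass to the limit under the integral defining $Q_\Theta$. Since $k^2$ is not an interior Dirichlet eigenvalue of $-\Delta$, Theorem~\ref{Th:Boundary-Integral-Equation}(i) lets me represent the scattered field as a single layer potential $u^s_L=\Scal_k g_L$, where $g_L\in\Hm$ is the unique solution of $T_L g_L=h_L$ with
\begin{equation*}
T_L\coloneq-\tfrac12 I+\Kcal_k^*+\Lambda_{\mathrm i}L\Vcal_k\in\mathcal{L}(\Hm),\qquad h_L\coloneq-\ddny{u^i}e-\Lambda_{\mathrm i}L\Tr^e_{\bord}u^i\in\Hm.
\end{equation*}
For $L\in\mathscr{L}_k$ the operator $T_L$ is a bijection of $\Hm$ by Theorem~\ref{Th:Boundary-Integral-Equation}(i), hence boundedly invertible by the open mapping theorem, so $g_L=T_L^{-1}h_L$. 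Combined with the asymptotic expansion of $u^s_L$ established above, this gives $Q_\Theta(L)=\int_\Theta|\langle g_L,\Tr F(\theta,\cdot)\rangle_{\Hm,\Hp}|^2\,\dth$.

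Next I would fix a sequence $L_m\to L$ in $\mathscr{L}_k\subset\mathcal{L}(\Hp)$ and show $g_{L_m}\to g_L$ in $\Hm$. Tracking the compositions $\Vcal_k:\Hm\to\Hp$, $L:\Hp\to\Hp$, $\Lambda_{\mathrm i}:\Hp\to\Hm$, one has $T_{L_m}-T_L=\Lambda_{\mathrm i}(L_m-L)\Vcal_k$ and $h_{L_m}-h_L=-\Lambda_{\mathrm i}(L_m-L)\Tr^e_{\bord}u^i$, whence $L\mapsto T_L$ and $L\mapsto h_L$ are Lipschitz and $T_{L_m}\to T_L$ in $\mathcal{L}(\Hm)$, $h_{L_m}\to h_L$ in $\Hm$. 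Since every $T_{L_m}$ and $T_L$ is invertible, Lemma~\ref{Lem:CV-inverse} with $B=\Hm$ yields $T_{L_m}^{-1}\to T_L^{-1}$ in $\mathcal{L}(\Hm)$; the estimate $\|T_{L_m}^{-1}h_{L_m}-T_L^{-1}h_L\|_{\Hm}\le\|T_{L_m}^{-1}\|\,\|h_{L_m}-h_L\|_{\Hm}+\|T_{L_m}^{-1}-T_L^{-1}\|\,\|h_L\|_{\Hm}$ then gives $g_{L_m}\to g_L$ in $\Hm$.

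Finally I would pass to the limit in $Q_\Theta$ by dominated convergence. For each $\theta$, $\langle g_{L_m},\Tr F(\theta,\cdot)\rangle_{\Hm,\Hp}\to\langle g_L,\Tr F(\theta,\cdot)\rangle_{\Hm,\Hp}$ since $g_{L_m}\to g_L$ and $\Tr F(\theta,\cdot)$ is fixed. For the domination, $|\langle g_{L_m},\Tr F(\theta,\cdot)\rangle|\le\|g_{L_m}\|_{\Hm}\|\Tr F(\theta,\cdot)\|_{\Hp}$; the convergent sequence $(g_{L_m})$ is bounded in $\Hm$, and because $F(\theta,y)$ is a constant multiple of $\exp(-\mathrm ik\theta\cdot y)$, the map $\theta\mapsto\chi F(\theta,\cdot)$ (for a fixed $\chi\in\mathscr D(\R^n)$ equal to $1$ near $\bord$) is continuous from the compact sphere $\mathbb{S}^{n-1}$ into $H^1(\R^n)$, so $\sup_\theta\|\Tr F(\theta,\cdot)\|_{\Hp}<\infty$ by continuity of the trace. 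Hence the integrands are bounded by a constant, integrable over the finite-measure set $\Theta$, and dominated convergence gives $Q_\Theta(L_m)\to Q_\Theta(L)$.

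I expect the main obstacle to be purely bookkeeping: verifying that $g_L$ genuinely solves $T_Lg_L=h_L$ in $\Hm$ — in particular that the Riesz isometry $\Lambda_{\mathrm i}$ is the correct device to transport the term $L\Tr^e_{\bord}u$ from $\Hp$ into $\Hm$, consistently with the weak Robin condition~\eqref{Eq:BC-Rob} — together with the uniform-in-$\theta$ bound on $\|\Tr F(\theta,\cdot)\|_{\Hp}$ needed for the domination. The convergence-of-inverses step is then immediate from Lemma~\ref{Lem:CV-inverse}.
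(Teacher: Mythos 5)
Your proposal is correct and follows essentially the same route as the paper's proof: represent $u^s_L$ via the boundary equation of Theorem~\ref{Th:Boundary-Integral-Equation}(i), use Lemma~\ref{Lem:CV-inverse} to get convergence of the solutions $g_{L_m}\to g_L$ in $\Hm$, and conclude by dominated convergence using the uniform-in-$\theta$ bound on $\|\Tr F(\theta,\cdot)\|_{\Hp}$ obtained from a cutoff near $\bord$. If anything, you are slightly more careful than the paper, which leaves implicit both the Riesz isometry $\Lambda_{\mathrm i}$ in the right-hand side $h_L$ and the convergence $h_{L_m}\to h_L$ needed alongside $T_{L_m}^{-1}\to T_L^{-1}$ to conclude $g_{L_m}\to g_L$.
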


\begin{proof}
Let $(L_m)_{m\in\N}\in \mathscr{L}_k^\N$ be a sequence converging to $L\in\mathscr{L}_k$ (in the sense of $\mathcal L(\Hp)$). By Theorem~\ref{Th:Boundary-Integral-Equation}, there exists a sequence $(g_m)\in\Hm^\N$ satisfying
\begin{equation*}
\forall m\in\N,\quad \left(-\frac12 I+\Kcal_k^*+\Lambda_{\mathrm i} L_m\Vcal_k\right)g_m=-\ddny{u^i}e\Big|_{\bord}-L_m\Tr^e_{\bord}u^i\in\Hm.
\end{equation*}
As a matter of fact, $g_m$ is none other than $\llbracket\ddn{u_{L_m}}\rrbracket$ for $m\in\N$.
It holds
\begin{equation*}
\left(-\frac12 I+\Kcal_k^*+\Lambda_{\mathrm i} L_m\Vcal_k\right)\longrightarrow \left(-\frac12 I+\Kcal_k^*+\Lambda_{\mathrm i} L_m\Vcal_k\right)\quad\mbox{in }\mathcal{L}(\Hm),
\end{equation*}
and all the operators involved (including the limit) are invertible.
By Lemma~\ref{Lem:CV-inverse}, it follows that the inverses converge as well in $\mathcal{L}(\Hm)$, hence $g_m\to g=\llbracket\ddn{u_L}\rrbracket$ in $\Hm$.
It follows that, for $\theta\in\mathbb{S}^{n-1}$,
\begin{equation*}
\left\langle g_m,\Tr (F(\theta,\cdot))\right\rangle_{\Bcal'\!,\,\Bcal(\bord)}\xrightarrow[m\to\infty]{} \left\langle g,\Tr (F(\theta,\cdot))\right\rangle_{\Bcal'\!,\,\Bcal(\bord)}.
\end{equation*}
Taking $0\in\Omega$, let $\chi\in\mathscr D(\R^2)$ be such that $0\not\in\operatorname{supp}\chi$ and $\chi=1$ near $\bord$.
Then, it holds
\begin{equation*}
\|\Tr (F(\theta,\cdot))\|_{\Hp}=\|\Tr (\chi F(\theta,\cdot))\|_{\Hp}\le\|\chi F(\theta,\cdot)\|_{H^1(\Omega)}.
\end{equation*}
Since $F\in C^\infty(\mathbb{S}^{n-1}\times\operatorname{supp}\chi)$, there exists $M>0$ such that
\begin{equation*}
\forall\theta\in\mathbb{S}^{n-1},\quad \|\chi F(\theta,\cdot)\|_{H^1(\Omega)}\le M.
\end{equation*}
By dominated convergence, it follows that
\begin{equation*}
Q_\Theta(L_m)\xrightarrow[m\to\infty]{}Q_\Theta(L).\qedhere
\end{equation*}
\end{proof}

The continuity of the form $Q_\Theta$ allows to state the existence of a solution to the optimisation problem for two-sided admissible obstacles in dimension $n\ge2$, thus generalising~\cite[Theorem 9.17]{colton_integral_2013}, see also~\cite{kirsch_optimal_1981,angell_generalized_1982}.

\begin{corollary}
Let $\Omega$ be a two-sided admissible domain of $\R^n$ and let $k>0$ be such that $k^2$ be outside the Dirichlet spectrum of $-\Delta$ on $\Omega$.
Let $\Theta$ be a measurable subset of $\mathbb{S}^{n-1}$.
Let $\mathscr{L}_k^0$ be a compact subset of $\mathscr{L}_k$.
Then, there exists a solution to the optimisation problem, that is
\begin{equation*}
\exists L^*\in\mathscr{L}_k^0,\quad \inf_{L\in\mathscr{L}_k^0}Q_\Theta(L)=Q_\Theta(L^*).
\end{equation*}
\end{corollary}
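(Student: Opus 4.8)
The plan is to invoke the direct method of the calculus of variations (a Weierstrass-type argument): a real-valued, sequentially continuous function on a sequentially compact set attains its bounds. The preceding theorem has already established that $Q_\Theta:\mathscr{L}_k\to\R$ is sequentially continuous, so the analytic content is done and only the compactness bookkeeping remains.

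First I would observe that $\mathscr{L}_k^0$, being a compact subset of the Banach (hence metrizable) space $\mathcal{L}(\Hp)$, is sequentially compact: every sequence in $\mathscr{L}_k^0$ admits a subsequence converging, in the operator norm, to a limit which again lies in $\mathscr{L}_k^0$. Next, since $Q_\Theta(L)=\int_\Theta\big|\langle\cdots\rangle\big|^2\,\dth\ge0$ for every $L$ and $\mathscr{L}_k^0$ is nonempty, the quantity $m\coloneq\inf_{L\in\mathscr{L}_k^0}Q_\Theta(L)$ is a well-defined finite nonnegative real number.

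I would then select a minimizing sequence $(L_m)_{m\in\N}\in(\mathscr{L}_k^0)^\N$ with $Q_\Theta(L_m)\to m$ as $m\to\infty$. By sequential compactness, there is a subsequence $(L_{m_j})_{j\in\N}$ converging to some $L^*\in\mathscr{L}_k^0$. Applying the sequential continuity of $Q_\Theta$ along this subsequence yields $Q_\Theta(L_{m_j})\to Q_\Theta(L^*)$; on the other hand, $Q_\Theta(L_{m_j})\to m$ as a subsequence of a convergent sequence. By uniqueness of limits in $\R$, $Q_\Theta(L^*)=m$, which is the claimed identity.

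I do not expect a genuine obstacle here, as the hard work has been isolated in the continuity theorem. The only points requiring minor care are the finiteness of the infimum, which is immediate from $Q_\Theta\ge0$, and the equivalence of compactness and sequential compactness, which holds since we work in a metric space. An entirely analogous argument would furnish a maximiser as well, should one prefer the supremum formulation suggested by the statement of the optimisation problem.
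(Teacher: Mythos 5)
Your proposal is correct and is exactly the argument the paper intends: the corollary is stated without an explicit proof precisely because it follows from the continuity theorem via the standard Weierstrass argument (minimizing sequence, sequential compactness of $\mathscr{L}_k^0$ in the normed space $\mathcal{L}(\Hp)$, passage to the limit). Your closing remark about the supremum formulation is also apt, since the optimisation problem described in the text is a maximisation while the corollary is phrased with an infimum, and the same argument covers both.
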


\section*{Acknowledgements}
The author is grateful to Anna Rozanova-Pierrat for her meaningful advice, David P. Hewett for his valuable feedback, Simon N. Chandler-Wilde for insightful discussions on the topic, as well as Marien Chenaud for his astute remarks.

\bibliographystyle{siam}
\def\refname{References}
\bibliography{BibGC.bib}

\newpage

\end{document}